\documentclass[12pt]{article}
\usepackage{no-ejc}
%\usepackage{multirow}
% Please remove all commands that change parameters such as
%    margins or page sizes.

% Packages amssymb and amsthm are already loaded. 
% We recommend these AMS packages:
\usepackage{amsmath,amssymb}
% We recommend the graphicx package for importing figures:
\usepackage{graphicx}
% Use this command to create hyperlinks (optional and recommended):
%\usepackage[colorlinks=true,citecolor=black,linkcolor=black,urlcolor=blue]{hyperref}
\usepackage{hyperref}

% Theorem-like environments that are declared in the style file are:
% theorem, lemma, corollary, proposition, fact, observation, claim,
% definition, example, conjecture, open, problem, question, remark, note

%%%%%%%%%%%%%METADATA%%%%%%%%%%%%%%%%%%%%%%%%%%%%%%%%%

% Give the submission and acceptance dates in the format shown.
% The editors will insert the publication date in the third argument.
\dateline{}{}{}

% Give one or more subject codes separated by commas.
% Codes are available from http://www.ams.org/mathscinet/freeTools.html
\MSC{}

\newcommand{\R}{\mathbb{R}}
\renewcommand{\SS}{\mathbb{S}}

\newcommand{\set}[1]{\left\{#1\right\}}

\usepackage{tikz}
\usetikzlibrary{arrows}
\usetikzlibrary{shapes,snakes}
\usepackage{geometry}
\geometry{a4paper}                   % ... or a4paper or a5paper or ... 
\geometry{vmargin = 2cm}
\geometry{hmargin = 2.3cm}

\def\tn{\textnormal}
\def\ld{\lambda}
\def\mR{\mathbb{R}}
\def\mS{\mathbb{S}}

\def\checkmark{\tikz\fill[scale=0.4](0,.35) -- (.25,0) -- (1,.7) -- (.25,.15) -- cycle;} 

\makeatletter
\def\hlinewd#1{%
  \noalign{\ifnum0=`}\fi\hrule \@height #1 \futurelet
   \reserved@a\@xhline}
\makeatother

\def\A{\mathcal{A}}
\def\B{\mathcal{B}}

\def\Hm{\mathcal{H}}
\def\cG{\mathcal{G}}
\def\I{\mathcal{I}}
\def\J{\mathcal{J}}
\def\K{\mathcal{K}}
\def\M{\mathcal{M}}

\def\S{\mathcal{S}}

\def\tn{\textnormal}

\def\es{\emptyset}

\def\sm{\setminus}

\def\a{\alpha}
\def\b{\beta}
\def\l{\ell}

\newcommand{\ignore}[1]{}

\DeclareMathOperator{\LS}{LS}

\DeclareMathOperator{\Las}{Las}

\DeclareMathOperator{\FRAC}{FRAC}
\DeclareMathOperator{\STAB}{STAB}
\DeclareMathOperator{\MT}{MT}
\DeclareMathOperator{\bMT}{bMT}
\DeclareMathOperator{\THG}{TH}
\DeclareMathOperator{\Aut}{Aut}
\DeclareMathOperator{\sat}{sat}

\makeatletter
\def\hlinewd#1{%
  \noalign{\ifnum0=`}\fi\hrule \@height #1 \futurelet
   \reserved@a\@xhline}
\makeatother

% Uncomment exactly one of the following copyright statements.  Alternatively,
% you can write your own copyright statement subject to the approval of the journal.
% See https://creativecommons.org/licenses/ for a full explanation of the 
% Creative Commons licenses.
%
% We strongly recommend CC BY-ND or CC BY. Both these licenses allow others
% to freely distribute your work while giving credit to you. The difference between
% them is that CC BY-ND only allows distribution unchanged and in whole, while
% CC BY also allows remixing, tweaking and building upon your work.  
%
%    One author:
%\Copyright{The author.}
%\Copyright{The author. Released under the CC BY license (International 4.0).}
%\Copyright{The author. Released under the CC BY-ND license (International 4.0).}
%    More than one author:
%\Copyright{The authors.}
%\Copyright{The authors. Released under the CC BY license (International 4.0).}
\Copyright{  The authors. Released under the CC BY-ND license (International 4.0).}

%%%%%%%%%%%%%%%%%%%%%%%%%%%%%%%%%%%%%%%%%%%%%%%%%%%

% If needed, include a line break (\\) at an appropriate place in the title.
\title{On Connections Between Association Schemes and \\ 
Analyses of Polyhedral and Positive Semidefinite \\
Lift-and-Project Relaxations}

%OLD TITLE
\ignore{
\title{Matchings, hypergraphs, association schemes, \\ and semidefinite optimization}
}

% Input author, affiliation, address and support information as follows;
% The address should include the country, but does not have to include
%    the street address. Give at least one email address.

\author{Yu Hin (Gary) Au \\
\small Department of Mathematics and Statistics\\[-0.8ex]
\small University of Saskatchewan\\[-0.8ex] 
\small Saskatoon, Canada\\
\small\tt au@math.usask.ca \\
\and
Nathan Lindzey \\
\small Department of Mathematical Sciences\\[-0.8ex]
\small University of Memphis\\[-0.8ex]
\small Memphis, U.S.A.\\
\small\tt nathan.lindzey@memphis.edu\\
\and
Levent Tun\c{c}el \thanks{Research of this author
was supported in part by
Discovery Grants from NSERC and by U.S. Office of Naval Research under award numbers
N00014-15-1-2171 and N00014-18-1-2078.}\\
\small Department of Combinatorics and Optimization\\[-0.8ex]
\small University of Waterloo\\[-0.8ex]
\small Waterloo, Canada.\\
\small\tt ltuncel@uwaterloo.ca}

\begin{document}

\maketitle

% E-JC papers must include an abstract. The abstract should consist of a
% succinct statement of background followed by a listing of the
% principal new results that are to be found in the paper. The abstract
% should be informative, clear, and as complete as possible. Phrases
% like "we investigate..." or "we study..." should be kept to a minimum
% in favor of "we prove that..."  or "we show that...".  Do not
% include equation numbers, unexpanded citations (such as "[23]"), or
% any other references to things in the paper that are not defined in
% the abstract. The abstract may be distributed without the rest of the
% paper so it must be entirely self-contained.  Try to include all words
% and phrases that someone might search for when looking for your paper.

\begin{abstract}
We explore some connections between association schemes and the analyses of the semidefinite programming (SDP) based convex relaxations of combinatorial optimization problems in the Lov\'{a}sz--Schrijver lift-and-project hierarchy. Our analysis of the relaxations of the stable set polytope leads to bounds on the clique and stability numbers of some regular graphs reminiscent of classical bounds by Delsarte and Hoffman, as well as the notion of deeply vertex-transitive graphs --- highly symmetric graphs that we show arise naturally from some association schemes. We also study relaxations of the hypergraph matching problem, and determine exactly or provide bounds on the lift-and-project ranks of these relaxations. Our proofs for these results also inspire the study of a homogeneous coherent configuration based on hypermatchings, which is an association scheme except it is generally non-commutative. We then illustrate the usefulness of obtaining commutative subschemes from non-commutative homogeneous coherent configurations via contraction in this context.
\end{abstract}

%OLD ABSTRACT
\ignore{
\begin{abstract}
We utilize association schemes to analyze
the quality of semidefinite programming (SDP) based convex relaxations of
integral packing and covering polyhedra determined by matchings in hypergraphs.
As a by-product of our approach, we obtain bounds on the clique and stability
numbers of some regular graphs reminiscent of classical bounds by Delsarte and Hoffman. We determine exactly or provide bounds on the performance of Lov\'{a}sz--Schrijver SDP
hierarchy, and illustrate the usefulness of obtaining commutative subschemes from
non-commutative schemes via contraction in this context.
\end{abstract}
}

\section{Introduction}

Association schemes provide a beautiful unifying framework for algebraic representations of symmetries of
permutation groups. In the space of symmetric $n$-by-$n$ matrices, $\SS^n$, the optimization of a linear function subject to
linear inequalities and equations on the matrix variable together with the positive semidefiniteness constraint, defines
a canonical representation of semidefinite programming (SDP) problems.  Let $\SS^n_+$ denote the set of positive semidefinite
matrices in the set of $n$-by-$n$ symmetric matrices $\SS^n$. Then, the automorphism group of $\SS^n_+$ can be algebraically described as:
\[
\Aut(\SS^n_+) = \left\{A \cdot A^{\top} \,\, : \,\, A \in \R^{n \times n}, \textup{ $A$ is non-singular } \right\}.
\]
That is, an automorphism acts as $X \mapsto A X A^{\top}$.
Clearly, conjugation by any $n$-by-$n$ permutation matrix is in the automorphism group of $\SS^n_+$. So,
if the linear equations and inequalities of our SDP problem are invariant under the action of symmetries of a
permutation group $\cG$, then for every feasible solution $\bar{X}$ of our SDP and for every $\sigma \in \cG$,
we have $\sigma(\bar{X})$ feasible in the SDP (where the second usage of $\sigma$, by our abuse of notation, denotes the action of
the permutation by the conjugation of the underlying permutation matrix, i.e., $\sigma: \SS^n \to \SS^n$).
Therefore, using the convexity of the feasible regions of SDPs, we conclude
\begin{equation}
\label{eqn:sym1}
\frac{1}{|\cG|} \sum_{\sigma \in \cG} \sigma(\bar{X})
\end{equation}
is feasible in our SDP. Similarly, if the objective function of our SDP is also invariant under the action of $\cG$, we conclude
that if $\bar{X}$ is an optimal solution of our SDP then so is the matrix given by \eqref{eqn:sym1}.

In some cases, this invariance group $\cG$ is so rich that the underlying SDP problems can be equivalently written
as linear programming (LP) problems. One of the earliest applications of this idea (in the context of the intersection of
combinatorial optimization, LP and SDP) appears in the 1970s (see \cite{Delsarte73,Lovasz79,Schrijver81}) as well as in the 1990s (see~\cite{Lee90}). More recently,
see \cite{GoemansR99,GatermannP04,deKlerkPS07} and the references therein.

Many, if not most problems in discrete mathematics (in particular, graph theory) are
stated in a way that they already expose potential invariances under certain group actions. In others, we are sometimes able to impose such
symmetries to simplify the analysis. Suitably constructed optimization problems formulating the underlying problem
usually inherit such symmetries. When such optimization problems are intractable, one resorts to
their convex relaxations. These convex relaxations typically inherit and sometimes even further enrich such symmetries.

Thus, tools in association schemes can be extremely helpful in analyzing matrix variables in semidefinite relaxations of which the underlying problem has rich symmetries. Conversely, the analyses of these matrix variables can lead to interesting observations for related association schemes. One of the goals of this paper is to highlight some of these connections between combinatorics and optimization. We aim to introduce the material gently so that the paper will be largely accessible to readers who have some familiarity with association schemes, algebraic graph theory, or semidefinite programming.

In Section~\ref{secPrelim}, we quickly review the basic definitions and facts we need from association schemes. We also introduce the hierarchy of semidefinite programming based convex relaxations generated by the $\LS_+$ operator due to Lov\'{a}sz and Schrijver~\cite{LovaszS91} (also known as the $N_+$ operator in the literature).

In Section~\ref{secStabilityClique}, we provide some bounds on the clique and stability numbers of some regular graphs by utilizing algebraic graph theory techniques and the lift-and-project operator $\LS_+$. The pursuit of when these bounds are tight leads to the notion of deeply vertex-transitive graphs, which are vertex-transitive graphs with additional symmetries and arise rather naturally from some association schemes. One of our main goals in Section~\ref{secStabilityClique} is to introduce our approach in an elementary way and in a well-known setting, where the analysis of a single step of $\LS_+$ hierarchy is relatively simple, and relate our findings to existing results.

In Section~\ref{secLandP}, we delve into our techniques more deeply and analyze the behaviour of $\LS_+$ hierarchy on a variety of integral packing and covering polyhedra arising from matchings in hypergraphs. In the process, we introduce the hypermatching homogeneous coherent configuration, which is almost an association scheme except it is generally non-commutative. Still, the known properties of some of its commutative subschemes are useful in our proofs in this section. 

Finally, in Section~\ref{secHnkl}, we shift our focus from lift-and-project analysis to the aforementioned hypermatching homogeneous coherent configuration, and discuss some of its interesting properties. We also explore the usefulness of obtaining commutative subschemes from non-commutative homogeneous coherent configurations via contracting associates.

%%%%%%%%%%%%%%%%%%%%%%%%%%%%%%%%%%%%%%%%%%%%%%%%%%%%%%%

\section{Preliminaries}\label{secPrelim}

In this section, we introduce the necessary definitions and notation in association schemes and lift-and-project methods for our subsequent discussion.  We refer the reader to~\cite{BannaiI84, Bailey04, Godsil18, GodsilM15} for a more thorough treatment of association schemes, and to~\cite{AuPhD} for a comprehensive analysis of lift-and-project operators in combinatorial optimization.

\subsection{Association schemes}

%The theory of association schemes can be viewed as a combinatorial generalization of group representation theory.  
Given a set of matrices $\A$, we let $\tn{Span}~\A$ denote the set of matrices that can be expressed as linear combinations of matrices in $\A$. We first define what an \emph{association scheme} is, as well as a slightly more general notion of a \emph{homogeneous coherent configuration}.

 %We would be remiss not to warn that our association schemes are allowed to be non-commutative, which are often called \emph{homogeneous coherent configurations}.  

\begin{definition}\label{defnscheme}
Let $\Omega$ be a finite set and $\mathcal{I}$ be a set of indices. A set $\mathcal{A} = \{A_i\}_{i \in \mathcal{I}}$ of $|\Omega|$-by-$|\Omega|$ $0,1$-matrices is a \emph{homogeneous coherent configuration} if 
\begin{itemize}
\item[(A1)] $A_i = I$, the identity matrix, for some $i \in \mathcal{I}$,
\item[(A2)] $B \in \mathcal{A} \Rightarrow B^{\top} \in \mathcal{A}$,
\item[(A3)] $\sum_{i \in \mathcal{I}} A_i = J$ where $J$ is the all-ones matrix, and
\item[(A4)] $A_iA_j \in \tn{Span}~\A$ for all $i,j \in \I$.
%\item[(A4)] $A_iA_j = \sum_{k \in \mathcal{I}} p^k_{i,j} A_k$ for all $i,j \in \mathcal{I}$ where $p_{i,j}^k \in \mathbb{N}$.
\end{itemize}
Each non-identity matrix of a homogeneous coherent configuration is referred to as an \emph{associate}.

Furthermore, a homogeneous coherent configuration is an \emph{association scheme} (or simply, a \emph{scheme}) if it further satisfies
\begin{itemize}
\item[(A5)]  $A_iA_j = A_jA_i$ for all $i,j \in \I$.
\end{itemize}
\end{definition}

Thus, a homogeneous coherent configuration is an association scheme except it is not necessarily commutative, and this more general notion will be particularly useful when we discuss homogeneous coherent configurations based on  hypergraph matchings later in the manuscript. For more on homogeneous coherent configuration, the reader may refer to  ~\cite{Cameron01}. Also, to reduce cluttering, we will abbreviate homogeneous coherent configurations as h.c.c.\ from here on. 

When $\mathcal{A}$ is an association scheme, $\tn{Span}~\A$ is a commutative matrix algebra called the \emph{Bose--Mesner algebra} of $\mathcal{A}$. A very useful property of commutative schemes is that the eigenspaces of the matrices in the scheme are aligned. That is, there exists an orthonormal set of eigenvectors $\{v_i\}_{i \in \Omega}$ that are eigenvectors for all matrices in  $\tn{Span}~\A$. %In this case, $\A$ admits a dual basis of \emph{primitive idempotents} $\{E_i\}_{i \in \mathcal{I}}$ where $E_i$ is the projector onto the $i$-th eigenspace of $\A$. 
Therefore, the eigenvalues of any matrix in $\tn{Span}~\A$ can be obtained by taking the corresponding linear combination of the eigenvalues of matrices in $\A$. 

We call a given h.c.c.\ \emph{symmetric} if all of its associates are symmetric matrices. It then follows from property (A4) that if $\A$ is symmetric, then $A_iA_j = A_jA_i$ for all $i,j \in \mathcal{I}$, thus a symmetric h.c.c.\ is also commutative. Not all commutative h.c.c.s are symmetric, but the commutative h.c.c.s that occur in this work will be. %For a more thorough treatment, we refer the reader to \cite{BannaiI84, GodsilM15, GodsilS10}.

For any association scheme $\A$ in which each associate has up to $q$ distinct eigenvalues, there is a set of projection matrices $\set{E_i}_{i=1}^q$ where each $E_i$ corresponds to one of the $q$ eigenspaces of matrices in $\A$. Then one can define the \emph{P-matrix} of $\A$ to be the $q$-by-$|\mathcal{I}|$ matrix where $P[i,j]$ is the eigenvalue of $A_j$ corresponding to the projection matrix $E_i$. (Notice that $P$ is necessarily a square matrix, as $|\mathcal{I}| = q$ follows from the fact that the primitive idempotents are a dual basis for the Bose--Mesner algebra.) Characterizing $\set{ E_i}_{i=1}^q$ and $P$ allows us to analyze any matrix $Y$ in the Bose--Mesner algebra of $\A$ in a unified way. This is particularly helpful in SDP problems where all feasible solutions lie in $\tn{Span}~\A$, as it could allow us to reduce the dimension and complexity of the SDP problem significantly, which is usually very helpful both in practice and in theoretical analysis of such SDP problems. In particular, a key consequence is that since the eigenspaces of all associates $A_i$ of $\A$ are aligned, for every matrix $Y \in \tn{Span}~\A$, any cone inequality based on the Loewner order can be rewritten as a set of equivalent linear inequalities. For example, given $B \in \tn{Span}~\A$, define $b \in \mR^{\I}$ such that $B = \sum_{i \in \I} b_i A_i$. Then the constraint $Y \succeq B$ holds if and only if $P(y-b) \geq 0$, where the variable vector $y \in \mR^{\I}$ represents the matrix $Y$ via $Y = \sum_{i \in \I} y_i A_i$.

One of the most ubiquitous association schemes is the Johnson scheme. Let $[p] := \set{1,\ldots ,p}$, and let $[p]_q := \set{S \subseteq [p] : |S| = q}$. Given integers $p,q$, and $i$ where $0 \leq i \leq \min\set{q,p-q}$, define the matrix $J_{p,q,i}$ whose rows and columns are indexed by elements in $[p]_q$, such that
\[
J_{p,q,i}[S,T] := 
\begin{cases}
1 \quad &\text{ if } |S \cap T| = q-i;\\
0 \quad &\text{ otherwise. }
\end{cases}
\]
Notice that $J_{p,q,0}$ is the $\binom{p}{q}$-by-$\binom{p}{q}$ identity matrix, while $J_{p,q,q}$ is the adjacency matrix of the Kneser graph of the $q$-subsets of $[p]$. Given fixed $p$ and $q$, the \emph{Johnson scheme} is the set of matrices $\mathcal{J}_{p,q} := \set{J_{p,q,i} : i=0, \ldots, \min\set{q,p-q}}$. It is easy to check that $\J_{p,q}$ indeed satisfies (A1)-(A4), and is symmetric (and hence commutative). The eigenvalues of the associates in $\J_{p,q}$ are well known (see, for instance,~\cite{Delsarte73, GodsilM15}).

\begin{proposition}\label{JohnsonEvalues}
The eigenvalues of $J_{p,q,i} \in \J_{p,q}$ are
\begin{eqnarray}
\label{JohnsonEvalues1}&& \sum_{h=i}^q (-1)^{h-i+j} \binom{h}{i} \binom{p-2h}{q-h} \binom{ p-h-j}{h-j}\\
\label{JohnsonEvalues2}&=& \sum_{h=0}^q (-1)^h \binom{j}{h} \binom{q-j}{i-h}\binom{p-q-j}{i-h},
\end{eqnarray}
for $j \in \{ 0,1, \ldots, q \}$.%, with respective multiplicities $\binom{p}{j} - \binom{p}{j-1}$.
\end{proposition}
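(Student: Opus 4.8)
The plan is to prove Proposition~\ref{JohnsonEvalues} by exhibiting an explicit orthogonal decomposition of $\mR^{[p]_q}$ into the common eigenspaces of the Johnson scheme, then computing how each associate $J_{p,q,i}$ acts on each piece. First I would recall the standard representation-theoretic fact that, under the action of the symmetric group $S_p$ on $[p]_q$, the permutation module $\mR^{[p]_q}$ decomposes as $V_0 \perp V_1 \perp \cdots \perp V_q$, where $V_j$ is the span of the (suitably normalized) functions coming from the $\binom{p}{j}$-dimensional layer that are orthogonal to everything pulled back from layer $j-1$; concretely, $V_j$ is the image of the ``rising'' map $W_{p,j}\to W_{p,q}$ (inclusion-sum on $j$-subsets) intersected with the kernel of the ``lowering'' map $W_{p,q}\to W_{p,q-1}$. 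Since each $J_{p,q,i}$ commutes with the $S_p$-action and each $V_j$ is an irreducible $S_p$-module with multiplicity one, Schur's lemma forces $J_{p,q,i}$ to act as a scalar $P[j,i]$ on $V_j$; this identifies $\set{E_j}$ with the orthogonal projections onto the $V_j$ and reduces the problem to computing these $q+1$ scalars.

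Next I would compute the scalar $P[j,i]$ by evaluating the action of $J_{p,q,i}$ on a convenient test vector in $V_j$. The cleanest choice is to work with the indicator-difference vectors, or equivalently to use the fact that $J_{p,q,i}$ can be written as a polynomial in the generator $J_{p,q,1}$ of the Bose--Mesner algebra (for a symmetric scheme the algebra is generated by any associate with all distinct eigenvalues, but more robustly one can use the explicit product formulas $J_{p,q,1}J_{p,q,i} = \sum_k c_{1,i}^k J_{p,q,k}$). Either way, one pins down the eigenvalue by a direct counting argument: fix $S,T\in[p]_q$ with $|S\cap T|=q-j$, apply $J_{p,q,i}$ to an eigenvector supported appropriately, and count the number of $q$-sets $U$ with $|S\cap U|=q-i$, stratified by $|U\cap T|$ and the relative position of $U$ to $S\cap T$; this gives a sum over an intermediate parameter $h$ of products of binomial coefficients, which after reindexing is exactly~\eqref{JohnsonEvalues1}. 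The identity~\eqref{JohnsonEvalues1}$=$\eqref{JohnsonEvalues2} is then a binomial-coefficient identity, provable by Vandermonde-type convolution or by comparing generating functions, and is of a form already recorded in the cited references~\cite{Delsarte73, GodsilM15}.

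Since the eigenvalue formulas for the Johnson scheme are entirely classical, the honest approach is to cite~\cite{Delsarte73} or~\cite{GodsilM15} for the statement and include only as much of the above as is needed for self-containment; I expect the paper to do essentially that. If a full proof were demanded, the main obstacle would not be conceptual but bookkeeping: getting the stratification of the counting argument exactly right so that the summation index $h$ and the alternating signs match the stated closed form, and then verifying the equivalence of the two displayed expressions without sign or range errors. A secondary subtlety worth stating carefully is the claim embedded in the preliminaries that $|\mathcal{I}| = q$ equals the number of distinct eigenvalues --- i.e., that the $q+1$ modules $V_0,\dots,V_q$ are pairwise non-isomorphic and each $J_{p,q,i}$ genuinely separates them (equivalently, the $P$-matrix is invertible); this follows from the fact that the $V_j$ are distinct irreducibles of $S_p$, but it should be flagged since it underpins the indexing in the proposition.
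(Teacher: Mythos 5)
Your proposal is correct and matches the paper's treatment: the paper states this classical result without proof, citing~\cite{Delsarte73, GodsilM15}, exactly as you anticipated. Your sketch of how a self-contained proof would go (multiplicity-free decomposition of $\mR^{[p]_q}$ under $S_p$, Schur's lemma, and a counting argument yielding the Eberlein-polynomial formulas) is the standard argument and is sound at the level of detail given.
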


Another important association scheme is the Hamming scheme. Given integers $p, q \geq 1$ and $i \in \set{0,1,\ldots, q}$, define the matrix $H_{p,q,i}$ whose rows and columns are indexed by elements in $\set{0,1, \ldots, p-1}^q$, such that
\[
H_{p,q,i}[S,T] := 
\begin{cases}
1 \quad &\tn{if $S,T$ differ at exactly $i$ positions;}\\
0 \quad &\tn{otherwise.}
\end{cases}
\]
Then $H_{p,q,i}$ is a $p^q$-by-$p^q$ matrix, and the \emph{Hamming scheme} $\Hm_{p,q}$ is the set of matrices $\set{H_{p,q,i} : i= 0,\ldots, q}$. As with the Johnson scheme, the Hamming scheme is symmetric and commutative, with well-known eigenvalues (see, for instance,~\cite{BrouwerCIM18}).

\begin{proposition}\label{HammingEvalues}
The eigenvalues of $H_{p,q,i} \in \Hm_{p,q}$ are
\[
 \sum_{h=0}^i (-1)^h (p-1)^{i-h} \binom{j}{r}\binom{q-j}{i-h}
 \]
 for $j \in \set{0, 1, \ldots, q}$.
\end{proposition}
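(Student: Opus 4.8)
The plan is to exploit the tensor-product structure of the Hamming scheme: $\Hm_{p,q}$ is the $q$-fold ``extension'' of the two-class scheme on $[p]$ (equivalently, of the complete graph $K_p$). The starting point is the generating-function identity
\[
\sum_{i=0}^{q} z^i\, H_{p,q,i} \;=\; \bigl(I_p + z(J_p - I_p)\bigr)^{\otimes q},
\]
where $I_p$ and $J_p$ denote the $p$-by-$p$ identity and all-ones matrices. This is immediate from the definition of the Kronecker product: indexing rows and columns by $S,T \in \set{0,\ldots,p-1}^q$, the $(S,T)$-entry of the right-hand side equals $\prod_{k=1}^{q} \bigl(I_p + z(J_p - I_p)\bigr)[S_k, T_k] = z^{d(S,T)}$, where $d(S,T)$ counts the coordinates in which $S$ and $T$ differ; collecting terms of equal degree in $z$ recovers $\sum_i z^i H_{p,q,i}$.

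Next I would diagonalize a single factor. The matrix $J_p - I_p$ has eigenvalue $p-1$ on the span of the all-ones vector $\mathbf{1}$ and eigenvalue $-1$ on $\mathbf{1}^{\perp}$, so $I_p + z(J_p - I_p)$ has eigenvalue $1 + z(p-1)$ on $\mathbf{1}$ and $1 - z$ on $\mathbf{1}^{\perp}$. Taking the $q$-fold Kronecker product, the common eigenspaces of all the $H_{p,q,i}$ are the tensor products $W_1 \otimes \cdots \otimes W_q$ with each $W_k \in \set{\mathrm{span}(\mathbf{1}),\, \mathbf{1}^{\perp}}$; if exactly $j$ of the factors equal $\mathbf{1}^{\perp}$, the resulting subspace has dimension $\binom{q}{j}(p-1)^{j}$ and $\sum_{i} z^i H_{p,q,i}$ acts on it as the scalar $\bigl(1 + z(p-1)\bigr)^{q-j}(1-z)^{j}$. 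These subspaces are mutually orthogonal and, since $\sum_{j=0}^{q} \binom{q}{j}(p-1)^{j} = p^q$, together span $\mR^{p^q}$; hence the scheme has exactly the $q+1$ eigenspaces indexed by $j \in \set{0,1,\ldots,q}$, and we have located \emph{all} the eigenvalues.

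Finally, the eigenvalue of $H_{p,q,i}$ on the $j$-th eigenspace is the coefficient of $z^i$ in $\bigl(1 + z(p-1)\bigr)^{q-j}(1-z)^{j}$; expanding each factor by the binomial theorem and convolving yields
\[
\sum_{a+b=i} \binom{q-j}{a}(p-1)^{a}\binom{j}{b}(-1)^{b} \;=\; \sum_{h=0}^{i} (-1)^{h}(p-1)^{i-h}\binom{j}{h}\binom{q-j}{i-h},
\]
which is the claimed expression (with $h$ in place of the index printed as $r$). Everything here is routine; the only points needing care are the bookkeeping in this last convolution --- matching $(-1)^{b}$ with the $(1-z)^{j}$ factor and $(p-1)^{a}$ with the $(1+z(p-1))^{q-j}$ factor --- and the observation that every $j \in \set{0,\ldots,q}$ produces a nonzero eigenspace, so that the list of $q+1$ values is complete rather than merely a list of some eigenvalues. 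An alternative is to argue with the characters $\chi_{\mathbf{a}}$ of the abelian group $\mZ_p^q$ acting on $\set{0,\ldots,p-1}^q$ by translation: each $H_{p,q,i}$ is translation-invariant, hence simultaneously diagonalized by the $\chi_{\mathbf{a}}$, the eigenvalue attached to $\chi_{\mathbf{a}}$ depends only on the Hamming weight $j$ of $\mathbf{a}$, and the computation reduces to the same per-coordinate sum ($p-1$ when a support coordinate of the difference vector avoids the support of $\mathbf{a}$, and $-1$ when it meets it). I would present the tensor-product version, since it avoids introducing roots of unity.
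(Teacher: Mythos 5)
Your argument is correct. The paper gives no proof of this proposition at all --- it is stated as a known fact with a pointer to the literature (the reference cited alongside it, \cite{BrouwerCIM18}) --- so there is nothing internal to compare against; what you wrote is the standard self-contained derivation, and it is sound. The generating-function identity $\sum_i z^i H_{p,q,i} = \bigl(I_p + z(J_p-I_p)\bigr)^{\otimes q}$ is verified correctly, the simultaneous diagonalization via the eigenspaces $\mathrm{span}(\mathbf{1})$ and $\mathbf{1}^{\perp}$ of each factor is right, and the coefficient extraction from $\bigl(1+z(p-1)\bigr)^{q-j}(1-z)^j$ reproduces exactly the claimed sum; you also correctly noted that the $\binom{j}{r}$ in the printed statement is a typo for $\binom{j}{h}$, the summation index. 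The only cosmetic point is that a single tensor factor pattern with $j$ copies of $\mathbf{1}^{\perp}$ has dimension $(p-1)^j$, and it is the union over the $\binom{q}{j}$ choices of positions that gives the $j$-th eigenspace its dimension $\binom{q}{j}(p-1)^j$; your dimension count and the completeness check $\sum_j \binom{q}{j}(p-1)^j = p^q$ are otherwise exactly as they should be.
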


\subsection{Lift-and-project methods and the $\LS_+$ operator}

Before we combine our knowledge of association schemes with the analyses of lift-and-project relaxations, let us first put the lift-and-project approach into perspective and introduce some necessary notation.

When faced with a difficult combinatorial optimization problem, one common approach is to model it as a $0,1$-integer program of the form
\[
\max \set{c^{\top}x : x \in P \cap \set{0,1}^n},
\]
where the set $P \subseteq [0,1]^n$ is convex and tractable (i.e., we can optimize any linear function over it in polynomial time). While integer programs are $\mathcal{NP}$-hard to solve in general, we could discard the integrality constraint, simply optimize $c^{\top}x$ over $P$, and efficiently obtain an approximate solution to the given problem. Furthermore, one can aim to improve upon the initial relaxation $P$. More precisely, given $P \subseteq [0,1]^n$, we define its \emph{integer hull} to be
\[
P_I := \tn{conv}\set{P \cap \set{0,1}^n}.
\]
When $P \neq P_I$, we strive to derive from $P$ another set $P'$ where $P_I \subseteq P' \subset P$. Ideally, this tighter set $P'$ is also tractable, and we can then optimize the same objective function $c^{\top}x$ over $P'$ instead of $P$, and obtain a potentially better approximate solution.

One way to systematically generate such a tighter relaxation is via the \emph{lift-and-project} approach. While there are many known algorithms that fall under this approach (see, among others,~\cite{SheraliA90, BalasCC93, Lasserre01, BienstockZ04, AuT16}), we will focus on the $\LS_+$-operator due to 
Lov{\'a}sz and Schrijver~\cite{LovaszS91}. Given $P \subseteq [0,1]^n$, define the homogenized cone of $P$ to be
\[
K(P) := \set{\begin{bmatrix}\lambda \\ \lambda x \end{bmatrix} \in \mathbb{R}^{n+1}: \lambda \geq 0, x \in P}.
\]
We index the new coordinate by $0$. Also, let $e_i$ be the $i^{\tn{th}}$ unit vector, and recall that $\mS^k$ denotes the set of $k$-by-$k$ symmetric matrices. Then we define
\begin{eqnarray*}
\LS_+(P)&:= &\Big\{x \in \mathbb{R}^n : \exists Y \in \mathbb{S}^{n+1}, \\
&& Ye_0 = \tn{diag}(Y) = \begin{bmatrix}1 \\ x \end{bmatrix}, \\
&&  Ye_i,  Y(e_0 - e_i)\in K(P),~\forall i \in [n], \\
&& Y \succeq 0 \Big\}.
\end{eqnarray*}

Intuitively, the operator $\LS_+$ \emph{lifts} a given $n$-dimensional set $P$ to a collection of $(n+1)$-by-$(n+1)$ matrices, imposes some constraints, and then \emph{projects} the set back down to $\mR^n$. Among other properties, $\LS_+(P)$ satisfies 
\[
P_I \subseteq \LS_+(P) \subseteq P
\]
for every $P \subseteq [0,1]^n$. To see the first containment, note that for every integral vector $x \in P$, $Y := \begin{bmatrix} 1 \\ x \end{bmatrix}\begin{bmatrix} 1 \\ x \end{bmatrix}^{\top}$ satisfies all conditions of $\LS_+$ and thus certifies $x \in \LS_+(P)$. For the second containment, let $x \in \LS_+(P)$ with certificate matrix $Y$ that satisfies $Ye_i, Y(e_0-e_i) \in K(P)$. Then $Ye_0 \in K(P)$ (since the cone $K(P)$ is closed under vector addition), certifying $x \in P$.

Thus, compared to $P$, $\LS_+(P)$ contains exactly the same collection of integer solutions, while providing a tighter relaxation of $P_I$.  Also, if $P$ is tractable, so is $\LS_+(P)$, as optimizing a linear function over this set amounts to solving a semidefinite program whose number of variables and constraints depend polynomially on that of $P$. 

Moreover, $\LS_+$ can be applied iteratively to a set $P$ to obtain yet tighter relaxations. If we let $\LS_+^k(P)$ denote the set obtained from $k$ successive applications of $\LS_+$ to $P$, then it holds in general that
\[
P \supseteq \LS_+(P) \supseteq \LS_+^2(P) \supseteq \cdots \supseteq \LS_+^n(P) = P_I.
\]
Thus, for every set $P \subseteq [0,1]^n$, $\LS_+$ generates a hierarchy of progressively tighter relaxations of $P_I$, with the guarantee that the operator reaches $P_I$ in at most $n$ iterations. For a proof of these properties as well as other aspects of $\LS_+$, the reader may refer to~\cite{LovaszS91}.

Given a set $P \subseteq [0,1]^n$, we define the \emph{$\LS_+$-rank} of $P$ to be the smallest integer $k$ where $\LS_+^k(P) = P_I$. The notion of lift-and-project rank gives us a measure of how far a given relaxation $P$ is from its integer hull $P_I$ with respect to the given lift-and-project operator. In particular, a relaxation having a high $\LS_+$-rank could indicate that the underlying integer hull is difficult to solve for, and/or that the operator $\LS_+$ is not well suited to tackle this particular problem. 

To establish a lower bound on the $\LS_+$-rank of a set, a standard approach is to show that there exists a point $\bar{x} \not\in P_I$ that is contained $\LS_+^k(P)$, which implies that the $\LS_+$-rank of $P$ is at least $k+1$. Verifying $\bar{x} \in \LS_+^k(P)$ would require finding a \emph{certificate matrix} $Y$ that satisfies all conditions specified in the definition of $\LS_+$. As we shall see, this is where symmetries in the given problem can be immensely useful. In particular, the task of establishing the positive semidefiniteness of $Y$ could be significantly simplified by relating it to matrices from association schemes whose eigenvalues are known. For instance, Georgiou~\cite{GeorgiouPhD} used the known eigenvalues of the Johnson scheme when establishing a lower bound on the Lasserre-rank of a relaxation related to the max-cut problem. We shall see a few other examples of this application in this manuscript.

\section{Bounding the clique and stability numbers of graphs}\label{secStabilityClique}

In this section, we focus on the SDP obtained from applying a single iteration of $\LS_+$ to the standard LP relaxation of the stable set problem of graphs. Part of the goal of this section is to introduce the proof techniques that we will need subsequently in this relatively elementary and well-known setting. We will also highlight several points in our discussion that we will revisit in greater depth and complexity in Sections~\ref{secLandP} and~\ref{secHnkl}.
 
The structure of this section is as follows. First, in Section~\ref{subsecStabilityClique1}, we look into the
$\LS_+$-relaxation of the stable set polytope of graphs, and prove our main result of the section (Proposition~\ref{propnka}). Along the way, we introduce the notion of deeply vertex-transitive graphs, which are graphs with very rich symmetries. We then show in Section~\ref{subsecStabilityClique2} how deeply vertex-transitive graphs can be constructed from some association schemes. Finally, in Section~\ref{subsecStabilityClique3}, we relate Proposition~\ref{propnka} to some classic results, such as bounds on the clique and chromatic numbers due to Delsarte~\cite{Delsarte73} and Hoffman~\cite{Hoffman03}.

\subsection{$\LS_+$-relaxations of the stable set polytope and deeply vertex-transitive graphs}\label{subsecStabilityClique1}

Given a simple graph $G$, we define the \emph{fractional stable set polytope} of $G$ to be
\[
\FRAC(G) := \set{ x \in [0,1]^{V(G)} : x_i + x_j \leq 1, ~\forall \set{i,j} \in E(G)}.
\]
We also define the \emph{stable set polytope} to be $\STAB(G) := \FRAC(G)_I$, the convex hull of the integral vectors in $\FRAC(G)$. Notice that a vector $x \in \set{0,1}^{V(G)}$ is contained in $\STAB(G)$ if and only if it is the characteristic vector of a stable set in $G$. We let $\alpha(G)$ denote the stability number of $G$ (i.e., the size of the largest stable set in $G$). We also let $\bar{e}$ be the all-ones vector (of appropriate dimensions). Then we see that
\begin{equation}\label{maxStab1} 
\alpha(G) = \max\set{ \bar{e}^{\top} x : x \in \STAB(G)}.
\end{equation}
Given a graph $G$, we define
\begin{equation}\label{maxStab2} 
\alpha_{\LS_+}(G) := \max\set{ \bar{e}^{\top}x : x \in \LS_+(\FRAC(G))}.
\end{equation}
Since $\STAB(G)  \subseteq  \LS_+(\FRAC(G))$ for every graph $G$, $\alpha(G) \leq \alpha_{\LS_+}(G)$.
Therefore, while it is $\mathcal{NP}$-hard to compute $\alpha(G)$ for a general graph, we can obtain an upper bound on $\alpha(G)$ by solving~\eqref{maxStab2}, which is a semidefinite program of manageable size. It is known that many classical families of inequalities that are valid for $\STAB(G)$ are also valid for $\LS_+(\FRAC(G))$, including (among others) clique, odd cycle, odd antihole, and wheel inequalities~\cite{LovaszS91, LiptakT03}. Moreover, given a graph $G$, consider the theta body of $G$, defined as follows:
\begin{eqnarray*}
\THG(G)&:= &\Big\{x \in \mathbb{R}^n : \exists Y \in \mathbb{S}^{n+1}, \\
&& Ye_0 = \tn{diag}(Y) = \begin{bmatrix}1 \\ x \end{bmatrix}, \\
&&  Y[i,j] = 0, \forall \set{i,j} \in E(G), \\
&& Y \succeq 0. \Big\}
\end{eqnarray*}
(See~\cite{LovaszS91} for a proof, as well as remarks on how the above is equivalent to the conventional definition of the theta body.) From their definitions, it is apparent that $\LS_+(\FRAC(G)) \subseteq \THG(G)$ for all graphs $G$.  Thus, if we define $\theta(G) := \max \set{\bar{e}^{\top}x : x \in \THG(G)}$, it follows that $\alpha_{\LS_+}(G) \leq \theta(G)$ for all graphs.

There has also been recent interest~\cite{BianchiENT13, BianchiENT17, Wagler18, BianchiENW23} in classifying \emph{$\LS_+$-perfect graphs}, which are graphs $G$ where $\LS_+(\FRAC(G)) = \STAB(G)$. Since the stable set polytope of a perfect graph is defined by only clique and non-negative inequalities, every perfect graph is also $\LS_+$-perfect.

For our analysis of $\LS_+(\FRAC(G))$, we are particularly interested in graphs that have plenty of symmetries. Given a vertex $i \in V(G)$, we let $G \ominus i$ denote the subgraph of $G$ induced by vertices that are neither $i$ nor adjacent to $i$. (Equivalently, we obtain $G \ominus i$ from $G$ by removing the closed neighborhood of $i$.) Also, given a graph $G$, we say that a permutation $\sigma$ of $V(G)$ is an \emph{automorphism} if $\set{i,j} \in E(G) \iff \set{ \sigma(i), \sigma(j) } \in E(G)$ for all $i,j \in V(G)$. Then we have the following.

\begin{definition}\label{defnDVT}
A graph $G$ is \emph{deeply vertex-transitive} if
\begin{itemize}
\item[(DVT1)]
$G$ is vertex-transitive;
\item[(DVT2)]
For every $i \in V(G)$ and for every $j_1, j_2 \in V(G \ominus i)$, there exists an automorphism $\sigma$ of $G$ where $\sigma(i) = i$ and $\sigma(j_1) = j_2$.
\item[(DVT3)]
$\overline{G}$, the complement graph of $G$, contains a connected component that is not a complete graph.
\end{itemize}
\end{definition}

Notice that if $G$ is vertex-transitive, then the graphs $\set{G \ominus i : i \in V(G)}$ are all isomorphic. Thus, for $G$ to satisfy (DVT2), it suffices to check the condition for an arbitrary vertex $i$. Also, (DVT3) is equivalent to requiring that the subgraph $G \ominus i$ contain at least one edge for every vertex $i$.

\begin{figure}[ht!]
\begin{center}
\def\x{-360/7}
\def\z{(90+\x)}
\begin{tabular}{cc}
\begin{tikzpicture}
[scale=1.6, thick,main node/.style={circle, minimum size=4mm, inner sep=0.1mm,draw,font=\tiny\sffamily}]
\node[main node] at ({cos((-1)*\x+\z)},{sin((-1)*\x+\z)}) (1) {$1$};
\node[main node] at ({cos(0*\x+\z)},{sin((0)*\x+\z)}) (2) {$2$};
\node[main node] at ({cos(1*\x+\z)},{sin((1)*\x+\z)}) (3) {$3$};
\node[main node] at ({cos((2)*\x+\z)},{sin((2)*\x+\z)}) (4) {$4$};
\node[main node] at ({cos(3*\x+\z)},{sin((3)*\x+\z)}) (5) {$5$};
\node[main node] at ({cos(4*\x+\z)},{sin((4)*\x+\z)}) (6) {$6$};
\node[main node] at ({cos((5)*\x+\z)},{sin((5)*\x+\z)}) (7) {$7$};

  \path[every node/.style={font=\sffamily}]
(1) edge (2)
(2) edge (3)
(3) edge (4)
(4) edge (5)
(5) edge (6)
(6) edge (7)
(7) edge (1);
\end{tikzpicture}
&
\begin{tikzpicture}
[scale=1.6, thick,main node/.style={circle, minimum size=4mm, inner sep=0.1mm,draw,font=\tiny\sffamily}]
\node[main node] at ({cos((-1)*\x+\z)},{sin((-1)*\x+\z)}) (1) {$1$};
\node[main node] at ({cos(0*\x+\z)},{sin((0)*\x+\z)}) (2) {$2$};
\node[main node] at ({cos(1*\x+\z)},{sin((1)*\x+\z)}) (3) {$3$};
\node[main node] at ({cos((2)*\x+\z)},{sin((2)*\x+\z)}) (4) {$4$};
\node[main node] at ({cos(3*\x+\z)},{sin((3)*\x+\z)}) (5) {$5$};
\node[main node] at ({cos(4*\x+\z)},{sin((4)*\x+\z)}) (6) {$6$};
\node[main node] at ({cos((5)*\x+\z)},{sin((5)*\x+\z)}) (7) {$7$};

  \path[every node/.style={font=\sffamily}]
(1) edge (3)
(1) edge (4)
(1) edge (5)
(1) edge (6)
(2) edge (4)
(2) edge (5)
(2) edge (6)
(2) edge (7)
(3) edge (5)
(3) edge (6)
(3) edge (7)
(4) edge (6)
(4) edge (7)
(5) edge (7);
\end{tikzpicture}
\end{tabular}
\caption{The $7$-cycle (left) and the $7$-antihole (right)}\label{figDVT1}
\end{center}
\end{figure}

\begin{example}\label{egDVT1}
Let $G$ be the $7$-cycle, as shown in Figure~\ref{figDVT1} (left). Then while $G$ is vertex-transitive, it is not deeply vertex-transitive, as there does not exist an automorphism $\sigma$ where $\sigma(1) = 1$ and $\sigma(3) = 4$, and so $G$ does not satisfy (DVT2).

On the other hand, consider $\overline{G}$, the 7-antihole (Figure~\ref{figDVT1}, right). It is indeed vertex-transitive and its complement (i.e., the 7-cycle) is connected and not a complete graph, and so both (DVT1) and (DVT3) hold. Next, observe that $\overline{G} \ominus 1$ is a complete graph formed by the two vertices $2$ and $7$, and the following automorphism $\sigma$ shows that (DVT2) holds as well.
\[
\begin{array}{r|c|c|c|c|c|c|c}
i & 1 & 2 & 3 & 4 & 5 & 6 & 7\\
\hline
\sigma(i) & 1 & 7 & 6 & 5 & 4 & 3 & 2
\end{array}
\]
In fact, one can extend this argument to show that the complement of the $n$-cycle is deeply vertex-transitive for every $n \geq 5$.
\end{example}

\begin{figure}[ht!]
\begin{center}
\def\x{-360/5}
\def\z{(90+\x)}
\def\y{0.55}
\begin{tabular}{cc}
\begin{tikzpicture}
[scale=2, thick,main node/.style={circle, minimum size=4mm, inner sep=0.1mm,draw,font=\tiny\sffamily}]
\node[main node] at ({cos((-1)*\x+\z)},{sin((-1)*\x+\z)}) (1) {$1$};
\node[main node] at ({cos(0*\x+\z)},{sin((0)*\x+\z)}) (2) {$2$};
\node[main node] at ({cos(1*\x+\z)},{sin((1)*\x+\z)}) (3) {$3$};
\node[main node] at ({cos((2)*\x+\z)},{sin((2)*\x+\z)}) (4) {$4$};
\node[main node] at ({cos(3*\x+\z)},{sin((3)*\x+\z)}) (5) {$5$};
\node[main node] at ({cos((-1)*\x+\z)*\y},{sin((-1)*\x+\z)*\y}) (6) {$6$};
\node[main node] at ({cos((0)*\x+\z)*\y},{sin((0)*\x+\z)*\y}) (7) {$7$};
\node[main node] at ({cos((1)*\x+\z)*\y},{sin((1)*\x+\z)*\y}) (8) {$8$};
\node[main node] at ({cos((2)*\x+\z)*\y},{sin((2)*\x+\z)*\y}) (9) {$9$};
\node[main node] at ({cos((3)*\x+\z)*\y},{sin((3)*\x+\z)*\y}) (10) {$10$};

  \path[every node/.style={font=\sffamily}]
(1) edge (2)
(2) edge (3)
(3) edge (4)
(4) edge (5)
(5) edge (1)
(1) edge (6)
(2) edge (7)
(3) edge (8)
(4) edge (9)
(5) edge (10)
(8) edge (6)
(9) edge (7)
(10) edge (8)
(6) edge (9)
(7) edge (10);
\end{tikzpicture}
&

\begin{tikzpicture}
[scale=2, thick,main node/.style={circle, minimum size=4mm, inner sep=0.1mm,draw,font=\tiny\sffamily}]
\node[main node] at ({cos((-1)*\x+\z)},{sin((-1)*\x+\z)}) (1) {$1$};
\node[main node] at ({cos(0*\x+\z)},{sin((0)*\x+\z)}) (2) {$5$};
\node[main node] at ({cos(1*\x+\z)},{sin((1)*\x+\z)}) (3) {$4$};
\node[main node] at ({cos((2)*\x+\z)},{sin((2)*\x+\z)}) (4) {$9$};
\node[main node] at ({cos(3*\x+\z)},{sin((3)*\x+\z)}) (5) {$6$};
\node[main node] at ({cos((-1)*\x+\z)*\y},{sin((-1)*\x+\z)*\y}) (6) {$2$};
\node[main node] at ({cos((0)*\x+\z)*\y},{sin((0)*\x+\z)*\y}) (7) {$10$};
\node[main node] at ({cos((1)*\x+\z)*\y},{sin((1)*\x+\z)*\y}) (8) {$3$};
\node[main node] at ({cos((2)*\x+\z)*\y},{sin((2)*\x+\z)*\y}) (9) {$7$};
\node[main node] at ({cos((3)*\x+\z)*\y},{sin((3)*\x+\z)*\y}) (10) {$8$};

  \path[every node/.style={font=\sffamily}]
(1) edge (2)
(2) edge (3)
(3) edge (4)
(4) edge (5)
(5) edge (1)
(1) edge (6)
(2) edge (7)
(3) edge (8)
(4) edge (9)
(5) edge (10)
(8) edge (6)
(9) edge (7)
(10) edge (8)
(6) edge (9)
(7) edge (10);
\end{tikzpicture}
\end{tabular}
\caption{The Petersen graph (left) and its image (right) under the automorphism $\sigma$ from Example~\ref{egDVT2}}\label{figDVT2}
\end{center}
\end{figure}

\begin{example}\label{egDVT2}
Let $G$ be the Petersen graph, as shown in Figure~\ref{figDVT2} (left). Since $G$ is indeed vertex-transitive and its complement is a connected and non-complete graph, both (DVT1) and (DVT3) hold. Next, consider the permutation $\sigma$ where
\[
\begin{array}{r|c|c|c|c|c|c|c|c|c|c}
i & 1 & 2 & 3 & 4 & 5 & 6 & 7 & 8 & 9 & 10\\
\hline
\sigma(i) & 1 & 5 & 4 & 9 & 6 & 2 & 10 & 3 & 7 & 8 
\end{array}
\]
In Figure~\ref{figDVT2} (right), we replaced every vertex label $i$ by $\sigma(i)$ to help visualize that $\sigma$ is indeed an automorphism. Notice that $\sigma$ fixes vertex $1$ while rotating the six vertices in the $6$-cycle in $G \ominus 1$. Thus, $G$ also satisfies (DVT2), and is indeed deeply vertex-transitive. In fact, the same automorphism $\sigma$ can also be used to show that the complement of the Petersen graph is also deeply vertex-transitive. We will revisit this example from a different perspective in Example~\ref{egDVT4}.
\end{example}

\begin{figure}[ht!]
\begin{center}
\def\x{-360/8}
\def\z{(90+\x)}
\begin{tikzpicture}
[scale=1.6, thick,main node/.style={circle, minimum size=4mm, inner sep=0.1mm,draw,font=\tiny\sffamily}]
\node[main node] at ({cos((-1)*\x+\z)},{sin((-1)*\x+\z)}) (1) {$1$};
\node[main node] at ({cos(0*\x+\z)},{sin((0)*\x+\z)}) (2) {$2$};
\node[main node] at ({cos(1*\x+\z)},{sin((1)*\x+\z)}) (3) {$3$};
\node[main node] at ({cos((2)*\x+\z)},{sin((2)*\x+\z)}) (4) {$4$};
\node[main node] at ({cos(3*\x+\z)},{sin((3)*\x+\z)}) (5) {$5$};
\node[main node] at ({cos(4*\x+\z)},{sin((4)*\x+\z)}) (6) {$6$};
\node[main node] at ({cos((5)*\x+\z)},{sin((5)*\x+\z)}) (7) {$7$};
\node[main node] at ({cos((6)*\x+\z)},{sin((6)*\x+\z)}) (8) {$8$};

  \path[every node/.style={font=\sffamily}]
(1) edge (2)
(1) edge (3)
(2) edge (3)
(2) edge (4)
(3) edge (4)
(3) edge (5)
(4) edge (5)
(4) edge (6)
(5) edge (6)
(5) edge (7)
(6) edge (7)
(6) edge (8)
(7) edge (8)
(7) edge (1)
(8) edge (1)
(8) edge (2);
\end{tikzpicture}
\caption{A graph $G$ where $G \ominus i$ is vertex-transitive for every $i$, while $G$ is not deeply vertex-transitive}\label{figDVT3}
\end{center}
\end{figure}

\begin{example}\label{egDVT3}
Given a deeply vertex-transitive graph $G$ and $i \in V(G)$, (DVT2) implies that $G \ominus i$ must be a vertex-transitive graph in its own right. However, we remark that the converse is not true. Let $G$ be the graph in Figure~\ref{figDVT3}. Then while $G \ominus 1$ (which is a clique on three vertices) is vertex-transitive, there is no automorphism $\sigma$ where $\sigma(1) = 1$ and $\sigma(4) = 5$.

Another such example is the (5-regular) Clebsch graph. While destroying any vertex yields the Petersen graph (which is vertex-transitive), the Clebsch graph itself does not satisfy (DVT2), and thus is not deeply vertex-transitive.
\end{example}

Next, given a graph $G$ that is $k$-regular with $n \geq 4$ vertices, we define the quantity
\[
\Delta(G) := 
\begin{cases}
 \frac{2n-3k}{n-3} & \tn{if $G$ contains a triangle;}\\
 \frac{n-3k+2 + \sqrt{ (n-3k+2)^2 + 4(n-3)(n-2k)}}{2(n-3)} & \tn{otherwise.}
\end{cases}
\]
Notice that, whenever $n \geq 4$ and $k \geq 1$,
\begin{eqnarray*}
&& \frac{n-3k+2 + \sqrt{ (n-3k+2)^2 + 4(n-3)(n-2k)}}{2(n-3)}\\
 &\leq & \frac{ n-3k+2 + \sqrt{(n-3k+2)^2} + \sqrt{4(n-3)(n-2k)}}{2(n-3)} \\
&\leq & \frac{n-3k+2 + (n-3k+2) + 2(n-k-\frac{3}{2})}{2(n-3)}\\
& \leq& \frac{2n-3k}{n-3}.
\end{eqnarray*}
Thus, $\Delta(G) \leq \frac{2n-3k}{n-3} \leq 2$ for all regular graphs $G$ on $4$ or more vertices. The following is the main result of this section.

\begin{proposition}\label{propnka}
Suppose $G$ is a $k$-regular graph on $n \geq 4$ vertices, and let $\lambda_2$ be the second largest eigenvalue of $G$. Define $a := \max\set{1, \lambda_2, \Delta(G)}$. Then
\begin{equation}\label{propnkaineq}
\alpha_{\LS_+}(G) \geq \frac{n-k+a}{a+1}.
\end{equation}
Moreover, if $G$ is deeply vertex-transitive, then equality holds in~\eqref{propnkaineq}.
\end{proposition}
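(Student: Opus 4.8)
The plan is to construct an explicit certificate matrix $Y$ witnessing that the appropriately symmetric point $\bar{x} = \frac{n-k+a}{n(a+1)}\bar{e}$ lies in $\LS_+(\FRAC(G))$, which immediately gives the inequality $\alpha_{\LS_+}(G) \geq \bar{e}^{\top}\bar{x} = \frac{n-k+a}{a+1}$. First I would set up $Y \in \SS^{n+1}$ in the natural block form determined by the coordinate $0$ and the vertex set $V(G)$: the $(0,0)$ entry is $1$, the $(0,i)$ and $(i,0)$ entries equal $\bar{x}_i = t$ for a scalar $t$ to be chosen, the diagonal entries $Y[i,i]$ equal $t$ (to meet $\tn{diag}(Y) = Ye_0$), and the off-diagonal block among vertices is taken to be a linear combination of the identity, the adjacency matrix $A$ of $G$, and the all-ones matrix $J$ — say $Y[i,j] = \beta$ if $\{i,j\} \in E(G)$ and $Y[i,j] = \gamma$ otherwise. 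For the conditions $Ye_i, Y(e_0-e_i) \in K(\FRAC(G))$ one forces $\beta = 0$ (so the edge constraints of $\FRAC$ are met with room to spare after normalization) and then checks that the two induced points, suitably rescaled, satisfy $x_u + x_v \leq 1$ on every edge; these reduce to one or two scalar inequalities in $t$ and $\gamma$. The remaining freedom is used to make $Y \succeq 0$.

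The heart of the argument is the positive semidefiniteness of $Y$. Since $Y$ lies in the span of $I_{n+1}$-type blocks together with $A$ and $J$ in the vertex block, and these are simultaneously diagonalizable (as $J$ is a polynomial in $A$ when $G$ is regular — more precisely $A$ and $J$ commute), I would diagonalize $Y$ by passing to the eigenbasis of $A$. Concretely, split $\R^{n+1}$ according to: the $2$-dimensional space spanned by $e_0$ and the all-ones vertex vector $\bar{e}$ (on which $A$ acts as its Perron eigenvalue $k$ and $J$ as $n$), and the orthogonal complement of $\bar{e}$ inside the vertex coordinates, which decomposes into eigenspaces of $A$ for eigenvalues $\lambda \leq \lambda_2$. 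On the first block $Y$ restricts to an explicit $2\times 2$ matrix whose positive semidefiniteness is one determinant inequality; on each remaining eigenspace $Y$ acts as the scalar $Y[i,i] - Y[i,j]_{\text{nonedge}} \cdot(\text{something}) + (\text{edge part})\cdot\lambda$, i.e. a linear function of the eigenvalue $\lambda$ of $A$, so it suffices to check nonnegativity at the extreme relevant eigenvalue. This is exactly where $a = \max\{1,\lambda_2,\Delta(G)\}$ enters: the worst-case eigenvalue contribution and the $2\times 2$ determinant condition together pin down the largest admissible $t$, and optimizing over the free parameter $\gamma$ yields precisely $t = \frac{n-k+a}{n(a+1)}$, with the triangle/triangle-free case split in $\Delta(G)$ accounting for whether a certain third eigenvalue-type constraint (coming from $A^2$ having a nonzero coefficient on $A$, i.e. $G$ having triangles) is active.

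For the ``moreover'' direction, when $G$ is deeply vertex-transitive I would argue that the upper bound $\alpha_{\LS_+}(G) \leq \frac{n-k+a}{a+1}$ also holds, so equality follows. Here the symmetry is used in two ways. First, by the averaging argument from the introduction (equation~\eqref{eqn:sym1}), applied to the automorphism group of $G$ acting on the SDP~\eqref{maxStab2}, there is an optimal solution of $\LS_+(\FRAC(G))$ equal to $s\bar{e}$ for a scalar $s$, with an optimal certificate $Y$ invariant under $\Aut(G)$; vertex-transitivity of $G$ and of each $G \ominus i$ (condition (ii)) forces this invariant $Y$ to have exactly the three-parameter structure above — constant diagonal $s$, constant edge entry, constant non-edge entry in the vertex block, and constant $0$-row — because the stabilizer of a vertex acts transitively on its neighbors and on its non-neighbors. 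Then the conditions $Y \succeq 0$ and $Ye_i, Y(e_0 - e_i) \in K(\FRAC(G))$ become exactly the same scalar system as before, and condition (iii) guarantees the relevant eigenvalue $\lambda_2$ is genuinely attained by an eigenvector supported on a non-complete component of $\overline{G}$, so the binding constraint in the lower-bound construction is actually tight here; hence $s \leq t$ and $\alpha_{\LS_+}(G) = ns \leq nt = \frac{n-k+a}{a+1}$.

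The main obstacle I anticipate is the case analysis hidden in $\Delta(G)$: verifying that the three-parameter $Y$ is PSD requires controlling not just $A$ but the interaction forced by the $\LS_+$ constraints $Ye_i \in K(\FRAC(G))$, which implicitly involve products like $A$ times a column of $Y$ and hence bring $A^2$-behaviour into play; disentangling when $G$ has a triangle (so $A^2$ has a positive diagonal-off-diagonal coupling that tightens the feasible region) from the triangle-free case, and checking that the messy quadratic defining $\Delta(G)$ in the triangle-free case is exactly the right threshold, is the delicate bookkeeping. The algebraic-graph-theory inputs (eigenvalue interlacing, the alignment of eigenspaces of $A$ and $J$ for regular graphs) are standard, so the difficulty is entirely in matching the two halves — the explicit construction and the symmetry-reduced upper bound — so that the same scalar $t$ comes out of both.
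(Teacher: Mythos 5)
Your outline follows the paper's proof essentially step for step: the same point $\frac{n-k+a}{n(a+1)}\bar{e}$, the same three-parameter certificate (zero entries on edges of $G$, a constant on non-edges), positive semidefiniteness via the spectrum of a regular graph (your eigenbasis split along $\tn{span}\set{e_0,\bar{e}}$ and $\bar{e}^{\perp}$ is the same computation as the paper's Schur-complement argument, since the paper's vertex block is $\frac{1}{d}\left((a+1)I + A(\overline{G})\right)$), and, for equality, the same averaging over $\Aut(G)$ using deep vertex-transitivity to force the form $\begin{bmatrix}1 & b\bar{e}^{\top}\\ b\bar{e} & bI + cA(\overline{G})\end{bmatrix}$ followed by the same scalar system. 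So the approach is the right one; however, two mechanisms are misidentified in your narrative, and the second would bite if you executed the plan literally as described.

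First, $\Delta(G)$ has nothing to do with the PSD check or with ``$A^2$-behaviour'': it is exactly the threshold that makes the linear conditions $Y(e_0-e_i)\in K(\FRAC(G))$ hold. The scaled column $Y(e_0-e_i)$ takes value $\frac{a+1}{d}$ on neighbours of $i$ and $\frac{a}{d}$ on non-neighbours; an edge of $G$ joining two neighbours of $i$ (i.e.\ a triangle) forces $d\ge 3(a+1)$, which is $a\ge \frac{2n-3k}{n-3}$, while in the triangle-free case the worst edge pairs a neighbour with a non-neighbour and gives $d\ge 3a+2$, which is the quadratic branch of $\Delta(G)$. Second, in the equality direction, condition (iii) of deep vertex-transitivity is not about $\lambda_2$ being ``attained on a component of $\overline{G}$''; it is used (a) to rule out the complete graph, so $\lambda_2>-1$ and the PSD constraint reads $c\le \frac{b}{\lambda_2+1}$, and (b) to produce an edge inside some $G\ominus i$, whence $Ye_i\in K(\FRAC(G))$ forces $2c\le b$. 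That polyhedral constraint is indispensable whenever $\lambda_2<1$: PSD alone only yields $c\le \frac{b}{\lambda_2+1}$, which would give the weaker bound $\frac{n-k+\lambda_2}{\lambda_2+1}$ (the theta-body bound of Proposition~\ref{propnkatheta}) and would fail to match the lower bound for, e.g., odd antiholes (Example~\ref{example1}). Likewise, when $G$ contains a triangle, vertex-transitivity puts a triangle through every vertex and $Y(e_0-e_i)\in K(\FRAC(G))$ forces $b\le \frac13$, which is precisely what makes the $\Delta(G)$ term in $a$ tight. With these two corrections your outline coincides with the paper's proof.
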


\begin{proof}
We first prove the inequality for all $k$-regular graphs. For convenience, let $\b_1 := \frac{2n-3k}{n-3}$ and $\b_2 :=  \frac{n-3k+2 + \sqrt{ (n-3k+2)^2 + 4(n-3)(n-2k)}}{2(n-3)}$ throughout this proof. First, we will show that the given bound applies for $a := \max\set{1, \ld_2, \b_1}$ for all graphs, and then mention how, in the case of triangle-free graphs, we might be able to use a smaller $a$ and potentially obtain a better lower bound for $\a_{\LS_+}(G)$.

Let $A(\overline{G})$ be the adjacency matrix of the complement of $G$. Note that the largest and smallest eigenvalues of $A(\overline{G})$ are $n-k-1$ and $-1-\ld_2$, respectively. Also, let $d := \frac{n(a+1)^2}{n-k+a}$, and we show that the certificate matrix $Y := \dfrac{1}{d} \begin{bmatrix} d & (a+1)\bar{e}^{\top} \\ (a+1)\bar{e} & (a+1)I + A(\overline{G})\end{bmatrix}$ satisfies all conditions of $\LS_+$. First, for every $i \in V(G)$,
\[
(Ye_i)[j] = \begin{cases}
\frac{a+1}{d} & \tn{if $j=0$ or $j=i$;}\\
0 & \tn{if $j \in V(G)$ is adjacent to $i$;}\\
\frac{1}{d} & \tn{otherwise.}
\end{cases}
\]
Since $a\geq 1$, the above vector must belong to $K(\FRAC(G))$. Likewise, notice that $Ye_i \geq \begin{bmatrix} \frac{a+1}{d} \\ 0 \end{bmatrix}$ for all $i$. Hence, 
\[
Ye_0 - Ye_i \leq \begin{bmatrix} 1 \\ \frac{a+1}{d} \bar{e} \end{bmatrix} - \begin{bmatrix} \frac{a+1}{d} \\  0 \end{bmatrix} = \frac{1}{d}   \begin{bmatrix} d-(a+1) \\ (a+1) \bar{e} \end{bmatrix}.
\]
Now, observe that $a \geq \frac{2n-3k}{n-3} =\b_1$ if and only if $d \geq 3(a+1)$. Thus,
\[
Ye_0 - Ye_i = \frac{1}{d}   \begin{bmatrix} d-(a+1) \\ (a+1) \bar{e} \end{bmatrix} \leq \frac{d-(a+1)}{d}  \begin{bmatrix}1 \\ \frac{1}{2}\bar{e} \end{bmatrix},
\]
and so $Ye_0 - Ye_i \in K(\FRAC(G))$ for every $i \in V(G)$.

Finally, notice that the minimum eigenvalue of $(a+1)I + A(\overline{G})$ is $(a+1) + (-1- \lambda_2)$, which is nonnegative since $a \geq \ld_2$. Thus, using the Schur complement, we see that $Y \succeq 0$ if and only if the eigenvalue of 
\[
(a+1)I + A(\overline{G}) - \frac{1}{d}(a+1)\bar{e} (a+1) \bar{e}^{\top} = (a+1)I + A(\overline{G}) - \frac{n-k-a}{n}J
\]
corresponding to $\bar{e}$ is non-negative. Indeed, one can check that this eigenvalue is $2a$, which is positive.

Since all conditions of $\LS_+$ are met, we conclude that $\frac{a+1}{d} \bar{e} \in \LS_+(G)$. Therefore, 
\[
\alpha_{\LS_+}(G) \geq \bar{e}^{\top} \left( \frac{a+1}{d} \bar{e} \right) = \frac{n-k+a}{a+1}
\]
for $a := \max\set{1, \ld_2, \b_1}$ for all $k$-regular graphs.

We now consider the case where $G$ does not contain a triangle. Let us take a closer look at the vector $Ye_0 - Ye_i$. Observe that
\[
(Ye_0 - Ye_i)[j] = \begin{cases}
1- \frac{a+1}{d} & \tn{if $j=0$;}\\
0 & \tn{if $j=i$;}\\
\frac{a+1}{d} & \tn{if $j \in V(G)$ is adjacent to $i$;}\\
\frac{a}{d} & \tn{otherwise.}
\end{cases}
\]
Since $G$ does not have a triangle, there are no edges $\set{j_1, j_2} \in E(G)$ where both $j_1, j_2$ are adjacent to $i$. Thus, to ensure $Ye_0 - Ye_i \in K(\FRAC(G))$, it suffices to have
\[
\frac{a+1}{d} + \frac{a}{d} \leq 1- \frac{a+1}{d},
\]
or equivalently $d \geq 3a+2$. This holds if $a \geq \frac{n-3k+2 + \sqrt{(n-3k+2)^2 + 4(n-3)(n+2k)}}{2(n-3)} = \b_2$. This proves the bound for graphs without triangles.

We next prove the reverse inequality for deeply vertex-transitive graphs, and again first establish the case where $G$ contains a triangle. Notice that if $G$ is deeply vertex-transitive, then $\ld_2 > -1$. To see that, let $\ld_1 \geq \ld_2 \geq \cdots \geq \ld_n$ be the eigenvalues of $G$. If $\ld_2 \leq -1$, then $\sum_{i=2}^n \ld_i \leq (n-1)\ld_2 \leq -(n-1)$. On the other hand, it is true for all graphs that $\ld_1 \leq n-1$ and the $n$ eigenvalues sum to zero. This implies that $\ld_1 = n-1$ and $\ld_2 = \cdots = \ld_n = -1$ in this case. Thus $G$ is a complete graph, which violates (DVT3) and thus is not deeply vertex-transitive. Hence, we will assume that $\ld_2 > -1$ for the rest of the proof. 

Next, let $\bar{x} \in \LS_+(\FRAC(G))$ where $\bar{e}^{\top} \bar{x} = \alpha_{\LS_+}(G)$. Then there must be a matrix $\bar{M}$ such that $Y = \begin{bmatrix} 1 & \bar{x}^{\top} \\ \bar{x} & \bar{M} \end{bmatrix}$ satisfies all conditions imposed by $\LS_+$. Now, let $P_{\sigma}$ be a permutation matrix corresponding to $\sigma \in \Aut(G)$, the automorphism group of $G$. Consider the matrix 
\[
Y' :=\frac{1}{|\Aut(G)|} \sum_{\sigma \in \Aut(G)} \begin{bmatrix} 1 & (P_{\sigma} \bar{x})^{\top} \\
P_{\sigma} \bar{x} & P_{\sigma} \bar{M} P_{\sigma}^{\top}
\end{bmatrix}.
\]
Since $\sigma \in \Aut(G)$, every matrix in the sum above also satisfies all conditions for $\LS_+$. This implies that $Y'$ is also a certificate matrix (since the set of certificate matrices is a convex set), and that $Y'e_0 \in K(\LS_+(\FRAC(G)))$. Also, since $G$ is vertex-transitive, it must be the case that $Ye_0 = \begin{pmatrix} 1 \\ b\bar{e} \end{pmatrix}$ for some constant $b$. Moreover, since $\bar{e}^{\top}\bar{x} = \bar{e} ^{\top}(P_{\sigma}\bar{x})$ for every permutation $\sigma$, $\bar{e}^{\top} (b \bar{e}) = \bar{e}^{\top} \bar{x} = \a_{\LS_+}(G)$.

Furthermore, by (DVT2) we see that for every $i \in V(G)$ and $j_1, j_2 \in V(G \ominus i)$, there exists $\sigma \in \Aut(G)$ where $\sigma(i) = i$ and $\sigma(j_1) = j_2$. Hence, it follows that $Y'[i, j_1] = Y'[i, j_2]$ for all $j_1, j_2 \in V(G \ominus i)$. Since $Y'$ is symmetric, we deduce that $Y'[i,j]$ must be constant over all distinct $i,j \in V(G)$ where $\set{i,j} \not\in E(G)$. Therefore, we see that $Y' = \begin{bmatrix} 1 & b\bar{e}^{\top} \\ b\bar{e} & bI + c A(\overline{G}) \end{bmatrix}$ for some real numbers $b$ and $c$. Now consider some of the restrictions $\LS_+$ imposes on $Y'$ (and hence $b$ and $c$). First, $ bI + c A(\overline{G}) \succeq 0$ implies that
\begin{equation}\label{propnka3}
 c \leq \frac{b}{\lambda_2 +1}.
 \end{equation}
 Note that we applied the assumption $\ld_2 > -1$ here. Likewise, $bI + c A(\overline{G}) - (b\bar{e})(b\bar{e}^{\top}) \succeq 0$ implies
 \begin{equation}\label{propnka5}  
 b + c(n-k-1) - b^2n \geq 0.
 \end{equation}
 Next, since $\overline{G}$ contains a component that is not the complete graph, there exists vertex $i$ in this component that is adjacent to vertices $j_1, j_2$ where $\set{j_1, j_2} \not\in E(\overline{G})$. This means that the subgraph $(G \ominus i)$ contains at least one edge $\set{j_1,j_2}$, and so the condition $Ye_i \in K(\FRAC(G))$ imposes that $Ye_i[j_1] + Ye_i[j_2] \leq Ye_i[0]$, which implies that
 \begin{equation}\label{propnka4}
c \leq \frac{b}{2}.
 \end{equation}
 Likewise, since $G$ is vertex-transitive and contains a triangle by assumption, for every vertex $i \in V(G)$ there exist two vertices $j_1, j_2$ that are adjacent to $i$ while $\set{j_1, j_2} \in E(G)$. Then since $(Ye_0-Ye_i)[j_1] = (Ye_0-Ye_i)[j_2] = b$ and $(Ye_0-Ye_i)[0] = 1-b$, the constraint
\[
(Ye_0-Ye_i)[j_1] + (Ye_0-Ye_i)[j_2] \leq (Ye_0-Ye_i)[0]
\]
implies
 \begin{equation}\label{propnka4a}
b \leq \frac{1}{3}.
 \end{equation}
Now, if we define $a' := \max\set{ \ld_2, 1}$, then~\eqref{propnka3} and~\eqref{propnka4} hold if and only if $c \leq \frac{b}{a'+1}$. Combining this with~\eqref{propnka4a} and~\eqref{propnka5} yields $b \leq \min\set{ \frac{1}{n} \left(\frac{n-k+a'}{a'+1}\right), \frac{1}{3}}$. Since $\frac{1}{n} \left(\frac{n-k+a'}{a'+1}\right) \leq \frac{1}{3}$ if and only if $a' \geq \b_1(G)$, this finishes the proof for graphs that contain a triangle.

Finally, if $G$ does not contain a triangle, then instead of imposing $3b \leq 1$ as in~\eqref{propnka4a}, the condition $Ye_0 - Ye_i \in K(\FRAC(G))$ would impose the weaker inequality $3b-c \leq 1$. Combined with~\eqref{propnka3},~\eqref{propnka4}, and~\eqref{propnka5}, we obtain that $b \leq \frac{n-k+a'}{a'+1}$ where $a' \geq \max\set{ 1, \ld_2, \b_2}$. This finishes the proof.
\end{proof}

Given a regular graph $G$, if we know that $\alpha(G) < \frac{n-k+a}{a+1}$, then Proposition~\ref{propnka} implies that $\alpha(G) < \alpha_{\LS_+}(G)$, and consequently $\LS_+(\FRAC(G)) \neq \STAB(G)$. On the other hand, given a deeply vertex-transitive graph $G$, Proposition~\ref{propnka} determines $\alpha_{\LS_+}(G)$ and implies that $\alpha(G) \leq \lfloor \frac{ n-k+a}{a+1} \rfloor$. We also remark that the same ingredients used in the proof of Proposition~\ref{propnka} can be used to prove a similar (but weaker) bound for $\theta(G)$:

\begin{proposition}\label{propnkatheta}
Suppose $G$ is a $k$-regular, non-complete graph on $n$ vertices, and let $\lambda_2$ be the second largest eigenvalue of $G$. Then
\begin{equation}\label{propnkathetaineq}
\theta(G) \geq \frac{n-k+\ld_2}{\ld_2+1}.
\end{equation}
Moreover, if $G$ is deeply vertex-transitive, then equality holds in~\eqref{propnkathetaineq}.
\end{proposition}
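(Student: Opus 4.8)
The plan is to reuse, in streamlined form, the certificate matrix and the symmetrization argument from the proof of Proposition~\ref{propnka}. The crucial simplification is that $\THG(G)$ does not impose the constraints $Ye_i,\,Y(e_0-e_i)\in K(\FRAC(G))$, so the quantity $\Delta(G)$ and the constant $1$ that appear in the definition of $a$ in Proposition~\ref{propnka} never enter, and we can work throughout with $\ld_2$ in place of $a$. Two preliminary facts: first, since $G$ is non-complete, $\ld_2>-1$ --- this is exactly the argument already given in the proof of Proposition~\ref{propnka} (if $\ld_2\le-1$ then $\ld_1\le n-1$ and $\sum_i\ld_i=0$ force $G=K_n$), so $\frac{1}{\ld_2+1}$ is well defined and positive; second, $k\le n-2$, so $n-1-k\ge 1$.

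For the lower bound, set $d:=\frac{n(\ld_2+1)^2}{n-k+\ld_2}>0$ and take
\[
Y:=\frac1d\begin{bmatrix} d & (\ld_2+1)\bar e^{\top}\\ (\ld_2+1)\bar e & (\ld_2+1)I+A(\overline G)\end{bmatrix}.
\]
Since $A(\overline G)$ has zero diagonal, $Ye_0=\operatorname{diag}(Y)=\begin{bmatrix}1\\ \frac{\ld_2+1}{d}\bar e\end{bmatrix}$, and $Y[i,j]=\frac1d A(\overline G)[i,j]=0$ whenever $\set{i,j}\in E(G)$. For positive semidefiniteness, the Schur complement with respect to the entry $1$ in position $(0,0)$ shows $Y\succeq 0$ iff $(\ld_2+1)I+A(\overline G)-\frac{(\ld_2+1)^2}{d}J=(\ld_2+1)I+A(\overline G)-\frac{n-k+\ld_2}{n}J\succeq 0$. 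Because $A(\overline G)=J-I-A(G)$, its eigenspaces are aligned with those of $A(G)$: on $\bar e$ the displayed matrix has eigenvalue $(\ld_2+1)+(n-1-k)-(n-k+\ld_2)=0$, and on $\bar e^{\perp}$ it has eigenvalues $(\ld_2+1)+(-1-\ld_i)=\ld_2-\ld_i\ge 0$ for $i=2,\dots,n$. Hence $Y\succeq 0$, so $\frac{\ld_2+1}{d}\bar e\in\THG(G)$ and $\theta(G)\ge\bar e^{\top}\!\left(\frac{\ld_2+1}{d}\bar e\right)=\frac{n(\ld_2+1)}{d}=\frac{n-k+\ld_2}{\ld_2+1}$.

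For equality when $G$ is deeply vertex-transitive, take $\bar x\in\THG(G)$ with $\bar e^{\top}\bar x=\theta(G)$ and a certificate $Y=\begin{bmatrix}1 & \bar x^{\top}\\ \bar x & \bar M\end{bmatrix}$, and average the conjugates $\begin{bmatrix}1 & (P_\sigma\bar x)^{\top}\\ P_\sigma\bar x & P_\sigma\bar M P_\sigma^{\top}\end{bmatrix}$ over $\sigma\in\Aut(G)$, exactly as in the proof of Proposition~\ref{propnka}; the average $Y'$ is again a certificate attaining $\theta(G)$. Vertex-transitivity of $G$ forces $Y'e_0=\begin{bmatrix}1\\ b\bar e\end{bmatrix}$, and vertex-transitivity of each $G\ominus i$ (together with symmetry of $Y'$ and vertex-transitivity of $G$) forces $Y'[i,j]$ to be a single constant $c$ over all non-edges $\set{i,j}$; since moreover $Y'[i,j]=0$ on edges and $\operatorname{diag}(Y')=\begin{bmatrix}1\\ b\bar e\end{bmatrix}$, we get $Y'=\begin{bmatrix}1 & b\bar e^{\top}\\ b\bar e & bI+cA(\overline G)\end{bmatrix}$. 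Now $Y'\succeq 0$ reduces, via the same Schur complement, to $bI+cA(\overline G)-b^2J\succeq 0$; restricting to $\bar e^{\perp}$ gives $b-c(\ld_i+1)\ge 0$ for $i\ge 2$, hence $c\le\frac{b}{\ld_2+1}$, and restricting to $\bar e$ gives $b+c(n-1-k)-b^2n\ge 0$. As $n-1-k>0$, the latter constraint is slackest at $c=\frac{b}{\ld_2+1}$, which yields $b\le\frac{n-k+\ld_2}{n(\ld_2+1)}$ and hence $\theta(G)=nb\le\frac{n-k+\ld_2}{\ld_2+1}$, completing the proof.

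Since the whole argument is a simplification of the proof of Proposition~\ref{propnka}, I do not expect a genuine obstacle. The two points needing care are (i) \emph{not} invoking the constraints $Ye_i\in K(\FRAC(G))$ (i.e.\ inequalities~\eqref{propnka4} and~\eqref{propnka4a}), which are unavailable for $\THG$ and whose absence is precisely why $\ld_2$ suffices in place of $\max\set{1,\ld_2,\Delta(G)}$; and (ii) verifying cleanly that vertex-transitivity of the graphs $G\ominus i$ collapses $\bar M$ to the Bose--Mesner-type form $bI+cA(\overline G)$ --- the same ``deeply vertex-transitive implies association-scheme-like structure'' step used earlier, relying on the existence, for each $i$ and each $j_1,j_2\in V(G\ominus i)$, of an automorphism fixing $i$ and sending $j_1$ to $j_2$.
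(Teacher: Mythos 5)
Your proof is correct and follows exactly the route the paper intends for Proposition~\ref{propnkatheta}: it re-runs the certificate construction and the $\Aut(G)$-averaging argument from the proof of Proposition~\ref{propnka} with $a$ replaced by $\ld_2$ and the $K(\FRAC(G))$ constraints dropped, which is precisely why only $\ld_2$ survives in the bound. (Your Schur-complement computation is in fact cleaner than the corresponding step in the paper's proof of Proposition~\ref{propnka}: the rank-one correction is $\frac{n-k+\ld_2}{n}J$ and the eigenvalue on $\bar{e}$ is exactly $0$, whereas the paper writes $\frac{n-k-a}{n}J$ and reports ``$2a$''.)
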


Thus, it follows from Propositions~\ref{propnka} and~\ref{propnkatheta} that $\alpha_{\LS_+}(G) < \theta(G)$ in some situations when $\ld_2 < 1$, as we shall see in Example~\ref{example1}.

\begin{example}\label{example1}
Let $G$ be the odd antihole with $n= 2\ell+1$ vertices, for some integer $\ell \geq 2$.
Then, as discussed in Example~\ref{egDVT1}, $G$ is deeply vertex-transitive. 
%(notice that $G \ominus i$ is the complete graph on $2$ vertices for every vertex $i$),
In this case, $\ld_2 = -1 + 2\cos\left( \frac{\pi}{2\ell+1}  \right) < 1$, and $\Delta(G) \leq 1$ for all $\ell \geq 2$. This implies that $\theta(G) =1+  \sec\left( \frac{\pi}{2\ell+1}\right) > 2$, while $\alpha_{\LS_+}(G) = 2 = \alpha(G)$.
\end{example}

Next, we provide another example showing that the bound in Proposition~\ref{propnka} can indeed be not tight when $G$ is not deeply vertex-transitive.

\begin{example}\label{example2}
Let $G$ be the $7$-cycle (which obviously does not contain a triangle). Then $G$ is 2-regular with $\ld_2 = 2\cos\left(\frac{2\pi}{7}\right) \approx 1.247$, and $\Delta(G) = \frac{3+ \sqrt{57}}{8} \approx 1.319$. Thus, $a = \frac{3+ \sqrt{57}}{8}$, and Proposition~\ref{propnka} implies that $\alpha_{\LS_+}(G) \geq 2.72$, which is not tight as $\alpha(G) = \alpha_{\LS_+}(G) = 3$.

In this case, while it is true that $\frac{3}{7} \bar{e} \in \LS_+(\FRAC(G))$, we cannot use the symmetry reduction in the proof of Proposition~\ref{propnka} to deduce that $\frac{3}{7} \bar{e}$ has a certificate matrix with constant non-edge entries, due to the fact that  $G \ominus i$ is not vertex-transitive for any vertex $i$. In fact, one can check that the unique matrix that certifies $\frac{3}{7}\bar{e} \in \LS_+(\FRAC(G))$ is
\[
Y = \frac{1}{7} \begin{bmatrix}
7 & 3 & 3& 3 & 3& 3 & 3& 3 \\
3 & 3 & 0 & 2 & 1 & 1 & 2 & 0 \\
3& 0  & 3 & 0 & 2 & 1 & 1 & 2 \\
3& 2 & 0  & 3 & 0 & 2 & 1 & 1  \\
3& 1 & 2 & 0  & 3 & 0 & 2 & 1 \\
3& 1 & 1 & 2 & 0  & 3 & 0 & 2\\
3& 2  &1 & 1 & 2 & 0  & 3 & 0 \\
3& 0 & 2  & 1 & 1 & 2 & 0  & 3 \\
\end{bmatrix}.
\]
\end{example}

We also remark that deeply vertex-transitivity is incomparable with arc-transitivity, a well-studied property that also characterizes graphs with rich symmetries (see, for instance,~\cite[Chapter 4]{GodsilR13}). For example, odd antiholes are deeply vertex-transitive but not arc-transitive (or even edge-transitive), while the opposite is true for odd cycles of length at least $7$.

\subsection{Constructing deeply vertex-transitive graphs from association schemes}\label{subsecStabilityClique2}

Next, we describe how deeply vertex-transitive graphs can arise from some association schemes. Given a graph $G$, let $A(G)$ denote the adjacency matrix of $G$, and conversely given a symmetric $0,1$-matrix $A$ we let $G(A)$ denote the undirected graph whose adjacency matrix is $A$.

We first prove a result that will help show that some graphs related to the Johnson scheme are deeply vertex-transitive.

\begin{lemma}\label{lemJohnsonDVT}
Let $p,q,i$ be integers where $p \geq q \geq i$. If $G(J_{p,q,i})$ contains a connected component that is not the complete graph, then $\overline{G(J_{p,q,i})}$ is deeply vertex-transitive.
\end{lemma}

\begin{proof}
For convenience, let $G := \overline{G(J_{p,q,i})}$ throughout this proof. Also, given a permutation $\sigma$ on $[p]$, we extend the notation and define
\begin{equation}\label{lemJohnsonDVTeq1}
\sigma(S) := \set{ \sigma(s) : s \in S}
\end{equation}
for every $S \subseteq [p]$. We can think of~\eqref{lemJohnsonDVTeq1} as extending $\sigma$ from a bijection from $[p]$ to $[p]$ to a bijection from $[p]_q$ to $[p]_q$ for every $q$ where $0 \leq q \leq p$.

Now, to verify (DVT1), let $S,T \in V(G)$. Observe that as long as the permutation $\sigma : [n] \to [n]$ satisfies $\sigma(S) = T$, then $\sigma$ defines an automorphism of $G$ that maps vertex $S$ to vertex $T$. Likewise, for (DVT2), let $S \in V(G)$ and $T_1, T_2 \in V(G \ominus S)$. Then we know that $|T_1 \cap S| = |T_2 \cap S| = q-i$. This implies that $|S \setminus T_1| = |S \setminus T_2| = i$ and $|T_1 \setminus S| = |T_2 \setminus S| = i$. Thus, there exists a bijection $\sigma : [p] \to [p]$ where $\sigma( T_1 \cap S) = T_2 \cap S, \sigma(S \setminus T_1) = S \setminus T_2$, and $\sigma(T_1 \setminus S) = T_2 \setminus S$. Then notice that $\sigma$ gives an automorphism of $G$ where $\sigma(S) = S$ and $\sigma(T_1) = T_2$. Thus, (DVT2) holds.

Therefore, if $\overline{G} = G(J_{p,q,i})$ contains a non-complete component, then (DVT3) would hold as well, and our claim follows.
\end{proof}

\begin{example}\label{egDVT4}
Consider the graph $G := G(J_{p,2,1})$ for some $p \geq 5$. Then $\overline{G} = G(J_{p,2,2})$. Since both $G$ and $\overline{G}$ are connected and non-complete graphs, we obtain from Lemma~\ref{lemJohnsonDVT} a family of graphs $G$ where both $G$ and $\overline{G}$ are deeply vertex-transitive.

Also, since $G(J_{5,2,2})$ gives the Petersen graph, we obtain an alternative proof that both the Petersen graph and its complement are deeply vertex-transitive, as shown earlier in Example~\ref{egDVT2}.
\end{example}

As we shall see later in the proof of Proposition~\ref{propnkaHamming}, we can use similar ideas from the proof of Lemma~\ref{lemJohnsonDVT} to show that the complements of some graphs that arise from the Hamming scheme are also deeply vertex-transitive. Next, with Lemma~\ref{lemJohnsonDVT}, we can use Proposition~\ref{propnka} to verify if optimizing over $\LS_+(\FRAC(G))$ gives the correct stability number of some deeply vertex-transitive graphs related to the Johnson scheme.

\begin{proposition}\label{propnkaJohnson}
\begin{itemize}
\item[(i)]
Given integers $q \geq 2$ and $p \geq q+2$, let $G := \overline{G(J_{p,q,1})}$. Then
\[
\alpha_{\LS_+}(G) = p-q+1 = \alpha(G).
\]
\item[(ii)]
Given integers $q \geq 2$ and $p \geq 2q+1$, let $G := \overline{G(J_{p,q,q})}$. Then
\[
\alpha_{\LS_+}(G) = \frac{p}{q}, 
\]
which is equal to $\alpha(G) = \lfloor \frac{p}{q} \rfloor$ if and only if $q | p$.
\end{itemize}
\end{proposition}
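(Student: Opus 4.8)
The plan is to verify the hypotheses of Proposition~\ref{propnka} for each of the two graphs, compute the relevant parameters $n$, $k$, $\lambda_2$, and $\Delta(G)$ explicitly using the eigenvalue formula for the Johnson scheme (Proposition~\ref{JohnsonEvalues}), and then read off $\alpha_{\LS_+}(G)$ from the formula $\frac{n-k+a}{a+1}$. The deep vertex-transitivity of both $\overline{G(J_{p,q,1})}$ and $\overline{G(J_{p,q,q})}$ should follow from Lemma~\ref{lemScheme2DVT}, since $\J_{p,q}$ is metric with respect to $G(J_{p,q,1})$ and both $J_{p,q,1}$ and $J_{p,q,q}$ are associates of a metric scheme; I will need to check that the relevant $G^{(d)}$ (namely $G^{(1)} = G(J_{p,q,1})$ in part (i) and $G^{(q)} = G(J_{p,q,q})$ in part (ii)) has a connected component that is not complete, which holds for $p \geq 4$, $q \geq 2$ in part (i) and $p \geq 2q$, $q \geq 2$ in part (ii) after the boundary cases are ruled out.

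For part (i): here $G = \overline{G(J_{p,q,1})}$ has $n = \binom{p}{q}$ vertices and is $\binom{p}{q} - 1 - q(p-q)$-regular, since $G(J_{p,q,1})$ is $q(p-q)$-regular. The eigenvalues of $A(\overline{G(J_{p,q,1})}) = J - I - A(J_{p,q,1})$ are $n - 1 - \mu$ as $\mu$ ranges over the eigenvalues of the Johnson graph $J_{p,q,1}$, except that the all-ones eigenvector contributes $n - 1 - q(p-q)$. The Johnson-graph eigenvalues are $(q-j)(p-q-j) - j$ for $j = 0, \ldots, q$, so the largest non-principal eigenvalue of $G = \overline{G(J_{p,q,1})}$ corresponds to the \emph{smallest} non-principal eigenvalue of the Johnson graph, which is $-q$ (attained at $j = q$, giving $-q$; one checks this is indeed the minimum over $j \geq 1$). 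Hence $\lambda_2(G) = n - 1 - (q(p-q) - q) = n - 1 - q(p-q) + q$, wait — I must be careful: $\lambda_2(\overline{G(J_{p,q,1})})$ equals $-1 - (\text{second-smallest Johnson eigenvalue})$ only if we order correctly; the cleanest route is: eigenvalues of $\overline{H}$ on the non-principal eigenspaces are $-1 - \theta$ for $\theta$ a non-principal eigenvalue of $H$, so $\lambda_2(\overline{G(J_{p,q,1})}) = -1 - \theta_{\min}$ where $\theta_{\min}$ is the minimum non-principal eigenvalue of $J_{p,q,1}$. Then I plug this $\lambda_2$ and the computed $\Delta(G)$ into $a = \max\{1, \lambda_2, \Delta(G)\}$ and simplify $\frac{n-k+a}{a+1}$; the claim is that this collapses to $p - q + 1$. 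The matching lower bound $\alpha(G) = p - q + 1$ comes from the fact that a stable set in $\overline{G(J_{p,q,1})}$ is a clique in the Johnson graph, i.e., a family of $q$-subsets pairwise at distance $1$ — a ``sunflower-like'' family, and the largest such has size $p - q + 1$ (a star or a line in the Johnson graph), combined with $\alpha(G) \leq \alpha_{\LS_+}(G)$.

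For part (ii): here $G = \overline{G(J_{p,q,q})}$ where $G(J_{p,q,q})$ is the Kneser graph $K(p,q)$ on disjoint $q$-sets. A stable set in $\overline{K(p,q)}$ is a clique in $K(p,q)$, i.e., a family of pairwise-disjoint $q$-subsets of $[p]$, so $\alpha(G) = \lfloor p/q \rfloor$; combined with $\alpha_{\LS_+}(G) = \frac{p}{q}$ this gives the claimed equality-iff-$q \mid p$ statement at the end. To get $\alpha_{\LS_+}(G) = p/q$ I again compute: $n = \binom{p}{q}$, $G(J_{p,q,q})$ is $\binom{p-q}{q}$-regular, and the Kneser-graph eigenvalues are $(-1)^j \binom{p-q-j}{q-j}$ for $j = 0, \ldots, q$ (a specialization of Proposition~\ref{JohnsonEvalues}), so the relevant $\lambda_2$ of $G = \overline{G(J_{p,q,q})}$ is $-1$ minus the minimum Kneser eigenvalue. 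I expect $a$ to turn out equal to $1$ here (so $\lambda_2 \leq 1$ and $\Delta(G) \leq 1$), forcing $\alpha_{\LS_+}(G) = \frac{n - k + 1}{2} = \frac{\binom{p}{q} - \binom{p-q}{q} + 1}{2}$, and the main computational task is the binomial identity $\frac{\binom{p}{q} - \binom{p-q}{q} + 1}{2} = \frac{p}{q}$ — but this cannot be right in general (the left side is not $p/q$ for, say, $p=4, q=2$: $(6 - 1 + 1)/2 = 3 \neq 2$), so in fact $a$ must be strictly larger than $1$ and I will need the precise value of $\Delta(G)$ or $\lambda_2$. I anticipate that $\lambda_2(G) = \binom{p-q}{q} - \binom{p-q-1}{q-1} - 1$ (coming from the $j=1$ eigenvalue of the Kneser graph, $-\binom{p-q-1}{q-1}$), and that after substitution $\frac{n-k+a}{a+1}$ telescopes to $p/q$; verifying this identity — and checking that this $\lambda_2$ genuinely dominates both $1$ and $\Delta(G)$ — is the main obstacle, requiring careful bookkeeping with the binomial coefficients. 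I would also double-check the triangle/triangle-free dichotomy in the definition of $\Delta(G)$: $\overline{G(J_{p,q,q})}$ does contain triangles when $p \geq 3q$ but may not when $2q \leq p < 3q$, so the two cases of $\Delta$ may both arise and must be handled, though in either case the final answer $p/q$ should be forced by the combination of constraints~\eqref{propnka3}--\eqref{propnka5} applied in the equality direction of Proposition~\ref{propnka}.
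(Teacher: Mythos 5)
Your part (i) is essentially the paper's own argument: deep vertex-transitivity of $\overline{G(J_{p,q,1})}$ via the metric structure of $\J_{p,q}$, $\lambda_2(G) = -1-\theta_{\min} = q-1$ from the Johnson eigenvalues, a check that $\Delta(G)$ does not exceed $\lambda_2$ (the paper verifies $\Delta(G)\le 1\le q-1$), so $a=\lambda_2$ and $\frac{n-k+a}{a+1}=p-q+1$, matched from below by the star $\set{[q-1]\cup\set{j} : j=q,\ldots,p}$ together with $\alpha(G)\le\alpha_{\LS_+}(G)$. You leave the final simplification and the $\Delta$-check as routine, which they are; this half is fine and coincides with the paper.

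Part (ii) follows the same strategy as the paper, but the two numerical anchors you commit to are wrong, and since you yourself flag ``verifying this identity'' as the main obstacle, this is a genuine gap as written. First, in your sanity check you took $n-k=\binom{p}{q}-\binom{p-q}{q}$, i.e.\ you used the degree of the Kneser graph; but $G=\overline{G(J_{p,q,q})}$, so $k=\binom{p}{q}-\binom{p-q}{q}-1$ and $n-k=\binom{p-q}{q}+1$. Second, your anticipated $\lambda_2(G)=\binom{p-q}{q}-\binom{p-q-1}{q-1}-1$ contradicts the correct rule you state one sentence earlier ($\lambda_2(G)$ is $-1$ minus the least Kneser eigenvalue $-\binom{p-q-1}{q-1}$), which gives $\lambda_2(G)=\binom{p-q-1}{q-1}-1$, the value the paper uses; with your value, even after fixing $k$, the formula $\frac{n-k+a}{a+1}$ evaluates (for $p>2q$) to $\frac{2p-3q}{p-2q}$, which equals $\frac{p}{q}$ only when $p=3q$, so the hoped-for telescoping would fail. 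With the corrected values it is immediate: $\frac{n-k+\lambda_2}{\lambda_2+1}=\frac{\binom{p-q}{q}+\binom{p-q-1}{q-1}}{\binom{p-q-1}{q-1}}=\frac{p-q}{q}+1=\frac{p}{q}$, using $\binom{p-q}{q}=\frac{p-q}{q}\binom{p-q-1}{q-1}$ --- exactly the paper's computation. Two further remarks: your worry about the triangle-free branch of $\Delta$ is moot, since for $q\ge2$ and $p\ge 2q$ the graph $\overline{G(J_{p,q,q})}$ always contains triangles (three $q$-sets through a common point), so only $\Delta(G)=\frac{2n-3k}{n-3}$ is relevant; and the one boundary case your plan (and, tacitly, the paper's proof, whose claim $\lambda_2\ge2$ fails there) does not cover is $p=2q$, where $\lambda_2=0$, $a=1$, the formula gives $\frac{3}{2}\ne 2$, and condition (iii) of deep vertex-transitivity fails because $\overline{G}$ is a perfect matching; there the claim needs a separate easy argument (e.g.\ $G$ is perfect since $\overline{G}$ is bipartite, so $\alpha_{\LS_+}(G)=\alpha(G)=2=p/q$).
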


\begin{proof}
For (i), we see that $G$ has $n= \binom{p}{q}$ vertices, each with degree $k = \binom{p}{q} -q(p-q)-1$. Also, with $q \geq 2$ and $p \geq q+2$, $\overline{G}$ indeed has a non-complete component, and thus $G$ is deeply vertex-transitive by Lemma~\ref{lemJohnsonDVT}. From Proposition~\ref{JohnsonEvalues}, one obtains the minimum eigenvalue of $J_{p,q,1}$ is $-q$, occurring when $j=q$. Thus, the second largest eigenvalue of $G$ is $\ld_2 = q-1 \geq 1$. Also, observe that
\[
\Delta(G) \leq \frac{2n-3k}{n-3} = \frac{3q(p-q)-3 - \binom{p}{q}}{\binom{p}{q}-3} \leq \frac{3(\frac{p}{2})(p-\frac{p}{2}) - 3 - \binom{p}{2}}{\binom{p}{2}-3} \leq 1
\]
for all $p \geq 4$. Thus, $a = \ld_2$, and we obtain from Proposition~\ref{propnka} that
\[
\alpha_{\LS_+}(G) = \frac{n-k+\ld_2}{\ld_2+1} = p-q+1.
\]
Since $\alpha(G) \leq \alpha_{\LS_+}(G)$ in general, it only remains to show that $\alpha(G) \geq p-q+1$. If we let $S_j := [q-1] \cup \set{j}$, then it is easy to check that $S_q, S_{q+1}, \ldots, S_{p}$ form a stable set in $G$ as any two of these sets have $q-1$ elements in common. Thus, (i) follows.

The proof of (ii) is similar. In this case, $n= \binom{p}{q}, k= \binom{p}{q} - \binom{p-q}{q} - 1$, and $\ld_2 = \binom{p-q-1}{q-1}-1$. Notice that $\ld_2 \geq 2$ for all $p\geq 4$ and $q \geq 2$, while $\Delta(G) \leq 2$  for all graphs $G$. Also $G(J_{p,q,q})$ (which is a Kneser graph) is connected and non-complete when $q \geq 2$ and $p \geq 2q+1$. Thus, $G$ is deeply vertex-transitive and $a = \ld_2$, so Proposition~\ref{propnka} implies that 
\[
\alpha_{\LS_+}(G) =\frac{n-k+\ld_2}{\ld_2+1} = \frac{p}{q}.
\]
On the other hand, a stable set in $G$ corresponds to a collection of disjoint $q$-subsets of $[p]$, and so $\alpha(G) = \lfloor \frac{p}{q} \rfloor$. This finishes our proof.
\end{proof}

Proposition~\ref{propnkaJohnson}(ii) implies that when $q$ does not divide $p$, $\FRAC(\overline{G(J_{p,q,q})})$ has $\LS_+$-rank at least $2$. We will revisit these polytopes from a different perspective and determine their exact $\LS_+$-rank when we study matchings in hypergraphs in Section~\ref{secLandP}.

\subsection{Relating Proposition~\ref{propnka} to classic bounds on clique and chromatic numbers}\label{subsecStabilityClique3}

Next, we relate Proposition~\ref{propnka} to some well-known bounds on the clique and chromatic numbers of graphs. For ease of comparison, let us restate Proposition~\ref{propnka} in terms of the clique number of a graph. If we let $\omega(G)$ be the size of the largest clique in $G$, then Proposition~\ref{propnka} readily implies the following:

\begin{corollary}\label{propnkaclique}
Suppose $G$ is a graph whose complement $\overline{G}$ is deeply vertex-transitive. If we let $\ld_1$ and $ \ld_n$ be the maximum and minimum eigenvalues of $G$ respectively, then
\[
\omega(G) \leq \left\lfloor 1 - \frac{\ld_1}{\min\set{\ld_n, -2, -1-\Delta(\overline{G})}} \right\rfloor.
\]
\end{corollary}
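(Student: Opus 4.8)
The plan is to deduce Corollary~\ref{propnkaclique} directly from Proposition~\ref{propnka} by passing to complements. Set $H := \overline{G}$, which by hypothesis is deeply vertex-transitive; say $H$ is $k$-regular on $n$ vertices. First I would record the standard dictionary: a clique in $G$ is a stable set in $H$, so $\omega(G) = \alpha(H)$, and since $H$ is deeply vertex-transitive (in particular not complete), Proposition~\ref{propnka} gives $\alpha(H) = \alpha_{\LS_+}(H) = \frac{n-k+a}{a+1}$ with $a = \max\set{1, \mu_2, \Delta(H)}$, where $\mu_2$ is the second-largest eigenvalue of $H$. Because $\alpha(H)$ is an integer, this yields $\omega(G) = \alpha(H) = \lfloor \frac{n-k+a}{a+1} \rfloor$.

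Next I would translate the spectral data of $H$ into that of $G$. Since $A(H) = J - I - A(G)$ and all the graphs involved are regular, the eigenvalues of $H$ and $G$ are in bijection on the orthogonal complement of the all-ones vector: if $\ld_1 \geq \cdots \geq \ld_n$ are the eigenvalues of $G$, then $\ld_1 = k_G$ (the degree of $G$) is the top one with eigenvector $\bar e$, and the remaining eigenvalues of $H$ are $\{-1-\ld_i : i = 2,\ldots,n\}$. In particular the second-largest eigenvalue of $H$ is $\mu_2 = -1-\ld_n$, and the degree of $H$ is $k = n - 1 - \ld_1$. Substituting $n - k = \ld_1 + 1$ and $a = \max\set{1, -1-\ld_n, \Delta(H)}$ into $\frac{n-k+a}{a+1}$ gives $\frac{\ld_1 + 1 + a}{a+1} = 1 + \frac{\ld_1}{a+1}$.

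It remains to rewrite $1 + \frac{\ld_1}{a+1}$ in the form stated in the corollary. Writing $a+1 = \max\set{2, -\ld_n, 1+\Delta(H)} = -\min\set{-2, \ld_n, -1-\Delta(H)}$, and noting $\ld_1 \geq 0$ (as $G$ has at least one edge — otherwise $\overline{G}$ would be complete, contradicting deep vertex-transitivity), we get
\[
1 + \frac{\ld_1}{a+1} = 1 - \frac{\ld_1}{\min\set{\ld_n, -2, -1-\Delta(\overline{G})}}.
\]
Taking floors and using $\omega(G) = \lfloor \frac{n-k+a}{a+1} \rfloor$ from the first step finishes the proof. The only mild subtlety — the ``main obstacle,'' such as it is — is bookkeeping the sign conventions correctly when converting the maximum in $a$ into the minimum of negative quantities inside the floor, and confirming that the edge cases ($G$ edgeless, $H$ triangle-free versus containing a triangle) are subsumed: the triangle distinction only affects which of $\Delta(H)$'s two formulas is in play and is already handled inside $\Delta(\overline{G})$, so nothing extra is needed here.
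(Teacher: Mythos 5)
Your overall route is the same as the paper's: apply Proposition~\ref{propnka} to $H := \overline{G}$, translate the spectrum of $H$ into that of $G$ (so $n-k = \ld_1+1$ and the second largest eigenvalue of $H$ is $-1-\ld_n$), rewrite $\frac{n-k+a}{a+1} = 1 - \frac{\ld_1}{\min\set{\ld_n,\,-2,\,-1-\Delta(\overline{G})}}$ via $a+1 = -\min\set{\ld_n,-2,-1-\Delta(\overline{G})}$, and take floors using the integrality of $\omega(G)=\alpha(H)$. That translation and bookkeeping are correct and match the paper.

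There is, however, a genuine misstep at the start: Proposition~\ref{propnka} does \emph{not} give $\alpha(H)=\alpha_{\LS_+}(H)$. Its ``equality'' clause says only that, for deeply vertex-transitive $H$, equality holds in~\eqref{propnkaineq}, i.e.\ $\alpha_{\LS_+}(H)=\frac{n-k+a}{a+1}$; it says nothing about $\alpha(H)$ coinciding with $\alpha_{\LS_+}(H)$, and that coincidence is false in general --- for $H=\overline{G(J_{p,q,q})}$ with $q\nmid p$ (Proposition~\ref{propnkaJohnson}(ii)), $\alpha_{\LS_+}(H)=p/q$ is not even an integer while $\alpha(H)=\lfloor p/q\rfloor$. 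Hence your intermediate claims $\alpha(H)=\alpha_{\LS_+}(H)$ and $\omega(G)=\lfloor\cdots\rfloor$ are unjustified (the corollary asserts only an upper bound). The damage is repairable because only the inequality is needed: the correct chain is $\omega(G)=\alpha(H)\leq\alpha_{\LS_+}(H)=\frac{n-k+a}{a+1}$, where the middle inequality holds for every graph (as noted right after~\eqref{maxStab2}), and then your spectral substitution plus the floor gives exactly the stated bound --- which is precisely the paper's argument. So replace your equalities by this inequality and drop the claim that the bound is attained.
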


\begin{proof}
Let $\ld_1 \geq \ld_2 \geq \cdots \geq \ld_n$ be the eigenvalues of $G$, and likewise $\overline{\ld}_1 \geq \overline{\ld}_2 \geq \cdots \geq \overline{\ld}_n$ be the eigenvalues of $\overline{G}$. Observe that $\ld_1 = n - \overline{\ld}_1 - 1$ and $\ld_n = -1 - \overline{\ld}_2$. Also, let $\overline{a} := \max\set{1, \overline{\ld}_2, \Delta(\overline{G})}$ and $a' := \min\set{ \ld_n, -2, -1-\Delta(\overline{G})}$, then $a' = -1 - \overline{a}$ regardless of the values of $\overline{\ld}_2$ and $\Delta(\overline{G})$. Next,  applying Proposition~\ref{propnka} to $\overline{G}$, we obtain
\[
\alpha(\overline{G}) \leq \alpha_{\LS_+}(\overline{G}) = \frac{n - \overline{\ld}_1 + \overline{a}}{\overline{a}+1} = 1 - \frac{\ld_1}{a'}.
\]
Since it is obvious that $\omega(G)$ is an integer and is equal to $\alpha(\overline{G})$ for every graph, the claim follows.
\end{proof}

%Notice that $\overline{G}$ is deeply vertex-transitive if and only if $G$ is vertex-transitive, and the subgraph induced by the vertices adjacent to any fixed vertex $i \in V(G)$ is also vertex-transitive and not a clique. 
We next relate Corollary~\ref{propnkaclique} to some well-known results. First, the following is due to Delsarte~\cite{Delsarte73}, which establishes a similar upper bound on the clique number for a different family of graphs.

\begin{proposition}\label{Delsarteclique}
Let $G$ be a graph whose adjacency matrix $A$ is an associate in an association scheme. Then
\[
\omega(G) \leq \left\lfloor 1 - \frac{\ld_1}{\ld_n} \right\rfloor.
\]
\end{proposition}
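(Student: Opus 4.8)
\textit{Proof proposal.}

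The plan is to derive Delsarte's bound from the positive semidefiniteness of the spectral idempotents of the commutative scheme containing $A = A(G)$. Fix such a scheme $\A = \set{A_i}_{i \in \I}$ with $A = A_r$ for some index $r$. Since $G$ is simple, $A = A^\top$, and since $A$ is an \emph{associate} (hence $A \neq I$) the graph $G$ has an edge, so its least eigenvalue satisfies $\lambda_n < 0$. I will use two standard facts about a commutative scheme that follow from its defining properties. First, every associate is regular: $A\bar{e} = k\bar{e}$, where $k := k_r$ is the common degree of $G$, whence $\lambda_1 = k$ by Perron--Frobenius. Second, there are orthogonal projections $E_0, E_1, \ldots$ onto the common eigenspaces, satisfying $\sum_i E_i = I$, $E_i E_j = 0$ for $i \neq j$, $E_i^2 = E_i \succeq 0$, $A_j E_i = P[i,j] E_i$, and each $E_i$ has constant diagonal entry $m_i / n$ with $m_i := \operatorname{tr}(E_i)$; moreover the eigenvalues of $A$ are exactly the numbers $\theta_i := P[i,r]$, so $\lambda_n = \min_i \theta_i$ and $\lambda_1 = \max_i \theta_i = k$.

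The heart of the argument is to evaluate $\bar{x}^\top E_i \bar{x}$ in closed form, where $\bar{x}$ is the characteristic vector of a clique $C$ with $\omega := |C|$. Write $E_i = \sum_{j \in \I} c_{ij} A_j$, the unique expansion of $E_i$ in the Bose--Mesner algebra $\tn{Span}\,\A$. Only the coefficients $c_{i0}$ and $c_{ir}$ are needed, and both are read off from the trace form $\langle X, Y \rangle := \operatorname{tr}(XY)$ together with the orthogonality $\operatorname{tr}(A_j A_\ell) = n k_j$ if $A_\ell = A_j^\top$ and $0$ otherwise: from $\langle E_i, I \rangle = m_i$ one gets $c_{i0} = m_i/n$, and from $\langle E_i, A_r \rangle = \operatorname{tr}(A_r E_i) = \theta_i m_i$ one gets $c_{ir} = \theta_i m_i / (nk)$. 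Now the crucial point: within $C$ the diagonal entries of $E_i$ equal $c_{i0}$, while \emph{every} off-diagonal pair of $C$ is $r$-related (it is an edge of $G$), so all those entries of $E_i$ equal $c_{ir}$. Hence, using $E_i \succeq 0$,
\[
0 \;\leq\; \bar{x}^\top E_i \bar{x} \;=\; \omega\, c_{i0} + \omega(\omega-1)\, c_{ir} \;=\; \frac{m_i \omega}{n}\left(1 + \frac{(\omega-1)\theta_i}{k}\right),
\]
and since $m_i \omega / n > 0$ we conclude $1 + (\omega-1)\theta_i / k \geq 0$ for every $i$.

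To finish, I would specialize to an index $i$ with $\theta_i = \lambda_n < 0$: then $(\omega-1)\lambda_n \geq -k$, and dividing by $\lambda_n$ (which reverses the inequality) gives $|C| - 1 \leq -k/\lambda_n = -\lambda_1/\lambda_n$. As this holds for every clique $C$ and $\omega(G)$ is an integer, $\omega(G) \leq \lfloor 1 - \lambda_1/\lambda_n \rfloor$. There is no deep obstacle here, but it is worth stressing the one genuinely essential ingredient: a plain Rayleigh-quotient or eigenvalue-interlacing argument applied to $\bar{x} - \tfrac{\omega}{n}\bar{e}$ only yields the $n$-dependent inequality $\omega(n - k + \lambda_n) \geq n(\lambda_n + 1)$, which is vacuous as an upper bound on $\omega$. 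One really must exploit that a clique is ``monochromatic'' for the relation $r$ --- this is exactly what makes $\bar{x}^\top E_i \bar{x}$ computable in closed form and converts $E_i \succeq 0$ into the scalar inequality above.
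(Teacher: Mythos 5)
Your proof is correct. Note that the paper does not prove Proposition~\ref{Delsarteclique} at all --- it is quoted as a known result with a citation to Delsarte --- so there is no in-paper argument to compare against; what you give is essentially Delsarte's classical argument via the primitive idempotents of the Bose--Mesner algebra. All the key steps check out: regularity of each associate (hence $\lambda_1=k$) follows from the constant diagonal of $AA^{\top}$ in the span of the scheme, the coefficients $c_{i0}=m_i/n$ and $c_{ir}=\theta_i m_i/(nk)$ are correctly extracted from the trace form, and the fact that a clique is monochromatic for the relation $r$ turns $E_i\succeq 0$ into $1+(\omega-1)\theta_i/k\ge 0$, which at $\theta_i=\lambda_n<0$ gives the bound. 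Your closing remark is also well taken --- and verifiably so, since the clique ratio bound genuinely fails for arbitrary regular graphs (e.g.\ the disjoint union of $K_4$ with a $3$-regular graph having least eigenvalue $-3$) --- so the scheme structure, not mere regularity, is indeed the essential ingredient.
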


Thus, Corollary~\ref{propnkaclique} could provide a tighter upper bound than Delsarte's in cases where $\ld_n > \min\set{ -2, -1 - \Delta(\overline{G})} $. (See, for instance,~\cite{CvetkovicRS04} for more on the rather restrictive families of graphs where $\ld_n > -2$.) Corollary~\ref{propnkaclique} also covers some graphs whose adjacency matrix does not belong to an association scheme.
%One uninteresting class of such examples is vertex-transitive graphs whose girth is at least 4. Here, the subgraph induced by any neighborhood of a vertex is an empty graph, which is vertex-transitive. However, in this case it is obvious that $\omega(G) = 2$. Another example
%One example is the icosahedron, where $\ld_1 = 5, \ld_n = -\sqrt{5}$, $\Delta(\overline{G}) = \frac{2}{3}$, and Corollary~\ref{propnkaclique} does give a tight bound for the clique number for the graph.

In addition to Delsarte's result, Hoffman~\cite{Hoffman03} has a similar bound on $\chi(G)$, the chromatic number of a graph.

\begin{proposition}\label{Hoffmanchromatic}
Let $G$ be a graph. Then
\[
\chi(G) \geq \left\lceil 1 - \frac{\ld_1}{\ld_n} \right\rceil.
\]
\end{proposition}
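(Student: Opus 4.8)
The plan is to derive Hoffman's bound $\chi(G) \geq \lceil 1 - \ld_1/\ld_n \rceil$ directly, using a short spectral argument based on interlacing. First I would recall the standard setup: let $G$ have $n$ vertices and chromatic number $t := \chi(G)$, so $V(G)$ partitions into $t$ nonempty independent sets $V_1, \ldots, V_t$. The adjacency matrix $A$ then has a block structure in which every diagonal block $A[V_s, V_s]$ is the zero matrix.

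The key step is to quotient by this partition. Form the $t$-by-$t$ \emph{quotient matrix} $B$ whose $(s,s')$ entry is the average row sum of the block $A[V_s, V_{s'}]$; by the previous observation, $B$ has zero diagonal, hence $\tn{trace}(B) = 0$, hence the eigenvalues of $B$ sum to zero and in particular $\mu_1(B) \geq 0 \geq \mu_t(B)$, where I write $\mu_1(B) \geq \cdots \geq \mu_t(B)$ for its eigenvalues. Now I invoke the eigenvalue interlacing theorem for quotient matrices of symmetric matrices (Haemers' interlacing, see~\cite{GodsilR13} or~\cite{BrouwerCIM18}): the eigenvalues of $B$ interlace those of $A$, so $\mu_1(B) \leq \ld_1$ and $\mu_t(B) \geq \ld_n$. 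Combining, $\ld_n \leq \mu_t(B) \leq 0 \leq \mu_1(B) \leq \ld_1$.

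The finishing move is an averaging/normalization argument on $B$ itself. Since $\chi(G) = t$ requires nothing further, I apply interlacing a second time but more carefully: in fact, for the quotient matrix of an equitable-in-average partition one has $\mu_1(B) \ge (\bar e^\top A \bar e)/n$ trivially, but the cleaner route is the classical ratio bound argument. Consider the principal submatrix trick: because each $V_s$ is independent, one shows $\ld_1 / (-\ld_n) \le t - 1$ by noting that the all-ones-on-block vectors span a $t$-dimensional space on which the Rayleigh quotient of $A$ is controlled by $B$, and $B$ having zero trace with $t$ eigenvalues forces $\mu_1(B) \le (t-1)(-\mu_t(B)) \le (t-1)(-\ld_n)$ while $\mu_1(B) \ge \ld_1 \cdot (\text{something} \ge 1)$. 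Concretely, one uses $\sum_s \mu_s(B) = 0$ to get $\mu_1(B) \le -(t-1)\mu_t(B)$; combined with $\mu_1(B) \ge \ld_1$ (this direction of interlacing applied to the top eigenvalue, valid since the partition's quotient captures the Perron direction when $G$ is, say, handled componentwise) and $\mu_t(B) \ge \ld_n$, we obtain $\ld_1 \le -(t-1)\ld_n$, i.e.\ $t \ge 1 - \ld_1/\ld_n$; since $t$ is an integer, $t \ge \lceil 1 - \ld_1/\ld_n \rceil$.

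The main obstacle is making the inequality $\mu_1(B) \ge \ld_1$ fully rigorous, since naive Haemers interlacing only gives $\mu_1(B) \le \ld_1$; the correct classical statement is that one does \emph{not} need this direction — instead the ratio bound is obtained by applying interlacing to the matrix $A$ together with the observation that $\ld_1$ is an eigenvalue of $B$ exactly when the coloring partition is equitable, and in general one passes to a spectral argument bounding the largest eigenvalue of any zero-trace real symmetric $t$-by-$t$ matrix $B$ with $\mu_1(B) \ge \ld_1$ replaced by the inertia bound $n_-(A) \ge$ (number of negative $\ld_i$) $\ge$ something. I would therefore present the cleanest version: take the $t$ indicator vectors of the color classes, restrict the quadratic form, and invoke the inertia/interlacing bound to conclude $\ld_1 \le (t-1)|\ld_n|$ directly — this is the step where care with which interlacing variant applies is essential, and where I expect the bulk of the technical attention to go. (In the paper this proposition is stated without proof as a cited classical result of Hoffman, so a full self-contained proof is optional, but the above is the route.)
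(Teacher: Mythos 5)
The paper offers no proof of this proposition --- it is quoted as a classical result of Hoffman --- so the only issue is whether your argument stands on its own, and at the decisive step it does not. For the plain quotient matrix $B$ of the color partition (average row sums of the blocks), interlacing gives only $\mu_1(B) \leq \lambda_1$ and $\mu_t(B) \geq \lambda_n$; the reverse inequality $\mu_1(B) \geq \lambda_1$, which your chain of inequalities needs, is false for general (non-equitable) partitions, and a proper coloring gives you no control over equitability. A concrete example: for the path on $4$ vertices $1$--$2$--$3$--$4$ with color classes $\{1,3\}$ and $\{2,4\}$, the quotient matrix is $\bigl(\begin{smallmatrix} 0 & 3/2 \\ 3/2 & 0 \end{smallmatrix}\bigr)$, so $\mu_1(B) = 3/2 < (1+\sqrt{5})/2 = \lambda_1$. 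Without $\mu_1(B) \geq \lambda_1$, the zero-trace estimate $\mu_1(B) \leq -(t-1)\mu_t(B) \leq -(t-1)\lambda_n$ says nothing about $\lambda_1$, so the bound does not follow. Your proposed patch --- ``take the $t$ indicator vectors of the color classes, restrict the quadratic form, and invoke the inertia/interlacing bound'' --- reconstructs exactly the same quotient matrix with the same one-sided inequalities, so it does not close the gap either; the appeal to ``capturing the Perron direction componentwise'' is where the missing idea should be but is not supplied.

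The standard repairs are as follows, and either one completes your outline. (i) Weight the indicators by a $\lambda_1$-eigenvector $v$: let $v_s$ agree with $v$ on $V_s$ and vanish elsewhere, and form the quotient of $A$ with respect to the vectors $v_1,\dots,v_t$. Since $\sum_s v_s = v$, the vector $v$ lies in their span, so now $\mu_1(B) \geq \lambda_1$ genuinely holds; the diagonal of $B$ is still zero because each $V_s$ is independent ($v_s^{\top} A v_s = 0$), and zero trace plus interlacing for the smallest eigenvalue gives $\lambda_1 \leq \mu_1(B) \leq -(t-1)\mu_t(B) \leq -(t-1)\lambda_n$. (ii) Avoid quotient matrices altogether: with $\omega$ a primitive $t$-th root of unity set $y_j = \sum_s \omega^{js} v_s$ for $j = 1,\dots,t-1$; then $\|y_j\|^2 = \|v\|^2$, each $y_j^{*} A y_j \geq \lambda_n \|v\|^2$, and $\sum_{j=1}^{t-1} y_j^{*} A y_j = -\lambda_1 \|v\|^2$, giving $\lambda_1 + (t-1)\lambda_n \leq 0$ directly. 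In either case one needs $G$ to have at least one edge so that $\lambda_n < 0$ (otherwise the ratio $\lambda_1/\lambda_n$ is undefined); with that, $t \geq 1 - \lambda_1/\lambda_n$ and integrality of $\chi(G)$ yields the ceiling, as you say.
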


Since $\chi(G) = \omega(G)$ for perfect graphs, combining Hoffman's bound and Corollary~\ref{propnkaclique} implies that if $G$ is perfect and deeply vertex-transitive, then $\ld_n \leq -2, \omega(G) = 1 - \frac{\ld_1}{\ld_n}$, and that $\ld_n$ must divide $\ld_1$. More recently, Godsil et al.~\cite[Lemma 5.2]{GodsilRSS16} showed that  $\theta(\overline{G}) = 1 - \frac{\ld_1}{\ld_n}$ if $G$ is $1$-homogeneous, thus implying that $\omega(G) \leq \lfloor 1 - \frac{\ld_1}{\ld_n}\rfloor$ for these graphs. $1$-homogeneous graphs contain graphs that are both vertex-transitive and edge-transitive, and can be shown to be incomparable with deeply vertex-transitive graphs using the same odd antihole and odd cycle examples mentioned earlier. 

Finally, we conclude this section by considering another example that highlights an idea we will discuss further in Sections~\ref{secLandP} and~\ref{secHnkl}. Given an integer $\ell \geq 2$, consider the Hamming scheme $\Hm_{2, 2\ell+1}$, and define the graph 
\[
G_{\ell} := G(H_{2, 2\ell+1, \ell} + H_{2, 2\ell+1, \ell+1}).
\]
In other words, $G_{\ell}$ has vertex set $\set{0,1}^{2\ell + 1}$, and two vertices are joined by an edge if their corresponding binary strings differ by $\ell$ or $\ell+1$ positions. Then we have the following:

\begin{proposition}\label{propnkaHamming}
For every $\ell \geq 2$, 
\begin{equation}\label{HammingBound}
\omega(G_{\ell}) \leq 2 \ell + 2.
\end{equation}
\end{proposition}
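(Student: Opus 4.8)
The plan is to apply Corollary~\ref{propnkaclique} to the graph $G_{\ell}$. For this I need two ingredients: first, that the complement $\overline{G_{\ell}}$ is deeply vertex-transitive, and second, control of the eigenvalues $\ld_1$ and $\ld_n$ of $G_{\ell}$ together with $\Delta(\overline{G_{\ell}})$. The deeply-vertex-transitive claim should follow from Lemma~\ref{lemScheme2DVT}: the Hamming scheme $\Hm_{2,2\ell+1}$ is metric with respect to the distance-regular graph $G(H_{2,2\ell+1,1})$ (the hypercube $Q_{2\ell+1}$), and $G_{\ell}$ is the union of the distance-$\ell$ and distance-$(\ell+1)$ graphs of $Q_{2\ell+1}$. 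I would first check that $G_{\ell}$ itself (equivalently, $\overline{G_{\ell}}$'s complement) has a connected component that is not complete, and that $G_{\ell}$ is vertex-transitive; vertex-transitivity is immediate since the translation action of $\mZ_2^{2\ell+1}$ preserves Hamming distance. For the ``$G \ominus i$ is vertex-transitive'' condition, the relevant subgraph is the one induced on the set of strings at distance $\ell$ or $\ell+1$ from a fixed $i$; the stabilizer of $i$ in $\Aut(Q_{2\ell+1})$ (which contains $S_{2\ell+1}$ permuting coordinates when $i$ is the all-zeros string) acts transitively on each distance sphere, hence on their union, giving (ii). So $\overline{G_{\ell}}$ is deeply vertex-transitive, modulo checking the non-complete-component condition.

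Next I would compute the eigenvalues of $G_{\ell}$ using Proposition~\ref{HammingEvalues} with $p=2$. Since $H_{2,q,i}$ has eigenvalues $\binom{q-j}{i}(-1)^j$-type expressions (the Krawtchouk values specialize nicely at $p=2$), the eigenvalue of $H_{2,2\ell+1,\ell}+H_{2,2\ell+1,\ell+1}$ on the eigenspace indexed by $j \in \{0,1,\dots,2\ell+1\}$ is a sum of two binomial-coefficient terms. I expect the top eigenvalue $\ld_1$ (at $j=0$) to be $\binom{2\ell+1}{\ell}+\binom{2\ell+1}{\ell+1}=2\binom{2\ell+1}{\ell}$, since $\binom{2\ell+1}{\ell}=\binom{2\ell+1}{\ell+1}$. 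The degree $k$ of $G_{\ell}$ equals this same number. For the least eigenvalue $\ld_n$, I would evaluate the Krawtchouk combination over all $j$ and identify the minimizer; a natural guess is that $\ld_n$ is attained at $j=2\ell+1$ or $j=2\ell$ and that the resulting bound $1-\ld_1/\min\{\ld_n,-2,-1-\Delta(\overline{G_{\ell}})\}$ works out to $2\ell+2$ after taking the floor. I would also need $\Delta(\overline{G_{\ell}})$, which by definition depends on whether $\overline{G_{\ell}}$ contains a triangle and only on $n=2^{2\ell+1}$ and the degree of $\overline{G_{\ell}}$; since $n$ is astronomically large compared to the degree, $\Delta(\overline{G_{\ell}}) \le 2$ and in fact $\Delta$ will be very close to $2$, so the binding term in the minimum is likely $\min\{\ld_n,-2\}$.

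The main obstacle I anticipate is the eigenvalue bookkeeping: getting a clean closed form (or at least a clean bound) for $\ld_n$ of $H_{2,2\ell+1,\ell}+H_{2,2\ell+1,\ell+1}$ from the Krawtchouk formula, and then verifying that $1 - \ld_1/\ld_n$ (or the version with $-2$) lands at exactly $2\ell+2$ after the floor. There is a subtlety: the bound in Corollary~\ref{propnkaclique} gives $\omega(G_{\ell}) \le \lfloor 1 - \ld_1/\min\{\ld_n,-2,-1-\Delta(\overline{G_{\ell}})\}\rfloor$, so I must confirm the exact value is at most $2\ell+2$ and not smaller (the statement is an upper bound $2\ell+2$, so I only need ``at most,'' which is more forgiving). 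A secondary check is the non-complete-component hypothesis of Lemma~\ref{lemScheme2DVT}: I need $G_{\ell}$ to have a component that is not a complete graph, which should hold easily since $G_{\ell}$ is connected (the distance-$\ell$ and distance-$(\ell+1)$ graphs of the odd-dimensional cube generate the full translation group) and has $2^{2\ell+1} > k+1$ vertices, so it is connected but not complete. Once these pieces are in place, the conclusion is a direct substitution into Corollary~\ref{propnkaclique}.
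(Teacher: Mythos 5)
Your route is the same as the paper's first proof: show $\overline{G_{\ell}}$ is deeply vertex-transitive and feed the spectrum of $G_{\ell}$ into Corollary~\ref{propnkaclique}. However, the decisive quantitative step is missing, and your guess for it fails. You correctly get $\lambda_1=2\binom{2\ell+1}{\ell}$, but the least eigenvalue is not located where you expect: by Proposition~\ref{HammingEvalues}, at $j=2\ell+1$ the eigenvalue of $H_{2,2\ell+1,\ell}+H_{2,2\ell+1,\ell+1}$ is $(-1)^{\ell}\binom{2\ell+1}{\ell}+(-1)^{\ell+1}\binom{2\ell+1}{\ell+1}=0$, and at $j=2\ell$ it equals $(-1)^{\ell}\frac{4}{\ell+1}\binom{2\ell-1}{\ell}$, which is \emph{positive} for even $\ell$ (for $\ell=2$ it is $+4$, while the true minimum $-4$ is attained at $j=\ell=2$). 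So evaluating only at your proposed indices produces no sufficiently negative eigenvalue and the bound does not follow. What the argument needs is $\lambda_n=\frac{-4}{\ell+1}\binom{2\ell-1}{\ell}$ (equivalently, some eigenvalue at most $-\lambda_1/(2\ell+1)$); the paper imports this least-eigenvalue formula from~\cite{BrouwerCIM18}, after which $1-\lambda_1/\lambda_n=2\ell+2$ exactly. Until you either cite or derive this, the proposal is not a proof.

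Two smaller points. First, ``the stabilizer acts transitively on each distance sphere, hence on their union'' is a non sequitur: the stabilizer of $i$ preserves distance to $i$, so it can never map a string at distance $\ell$ to one at distance $\ell+1$. The repair is the map $x\mapsto x+\mathbf{1}$ (adding the all-ones string), which preserves all pairwise Hamming distances and interchanges the spheres of radii $\ell$ and $\ell+1$ about $i$; combined with the coordinate permutations fixing $i$ this gives vertex-transitivity of $\overline{G_{\ell}}\ominus i$. Second, your estimate of $\Delta(\overline{G_{\ell}})$ uses the degree of $G_{\ell}$ where the degree of $\overline{G_{\ell}}$ (namely $2^{2\ell+1}-1-2\binom{2\ell+1}{\ell}$, which is close to $n$) is required; in fact $\Delta(\overline{G_{\ell}})$ is near $-1$ for large $\ell$, not near $2$. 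This slip happens to be harmless: all you need is $\lambda_n\leq\min\set{-2,\,-1-\Delta(\overline{G_{\ell}})}$, which holds because $\Delta(\overline{G_{\ell}})\leq 2$ always while $\lambda_n\leq -4$ for $\ell\geq 2$, but it should be stated for the correct graph. For comparison, the paper also gives a second, scheme-theoretic proof via Delsarte's bound (Proposition~\ref{Delsarteclique}), obtained by contracting $H_{2,2\ell+1,j}$ with $H_{2,2\ell+1,2\ell+1-j}$ into a commutative subscheme; your sketch corresponds to its first proof.
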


\begin{proof}
We provide two proofs to this claim. First, we claim that $\overline{G_{\ell}}$ is deeply vertex-transitive. While (DVT1) and (DVT3) are relatively straightforward to check, we provide the details to verifying (DVT2). First, we associate each vertex in $\overline{G_{\ell}}$ (a binary string of length $2\ell+1$) with the set $S \subseteq [2\ell+1]$ which contains the position of the $1$'s in the string. Now let $S := \es$ (i.e., the string of all zeros), and let $T_1, T_2 \in V ( \overline{G_{\ell}} \ominus S)$. Then $|T_1|, |T_2| \in \set{\ell, \ell+1}$. We now construct an automorphism of $\overline{G}_{\ell}$ that maps $\es$ to $\es$ and $T_1$ to $T_2$.

If $|T_1| = |T_2|$, let $\sigma: [2\ell+1] \to [2\ell+1]$ be a permutation where $\sigma(T_1) = T_2$. Then $\sigma$ (extended to a permutation on the power set $2^{[2\ell+1]}$)
gives an automorphism of $\overline{G_{\ell}}$ that also satisfies $\sigma(\es) = \es$. Next, if $|T_1| \neq |T_2|$, then $|T_1| + |T_2| = 2\ell +1$. Let $\sigma : [2\ell+1] \to [2\ell+1]$ be a permutation where $\sigma(T_1) = [2\ell +1] \setminus T_2$. Then $\sigma' : 2^{[2\ell+1]} \to 2^{[2\ell+1]}$ where
\[
\sigma'(T) := [2\ell+1] \setminus \sigma(T)
\]
is an automorphism of $\overline{G_{\ell}}$ that satisfies $\sigma'(T_1) = T_2$. The only issue is that $\sigma'(\es) = [2\ell+1]$ and $\sigma'([2\ell+1]) = \es$. However, since $\es$ and $[2\ell+1]$ are adjacent to exactly the same set of vertices in $\overline{G}_{\ell}$, if we define $\sigma'' : 2^{[2\ell+1]} \to 2^{[2\ell+1]}$ where
\[
\sigma''(T) := \begin{cases}
\sigma'(T) & \tn{if $T \not\in \set{\es, [2\ell+1]}$;}\\
T & \tn{if $T \in \set{\es, [2\ell+1]}$,}
\end{cases}
\]
then $\sigma''$ is also an automorphism of $\overline{G_{\ell}}$, with $\sigma''(\es) = \es$ and $\sigma''(T_1) = T_2$. This shows that $\overline{G_{\ell}}$ satisfies (DVT2).

Since $\overline{G_{\ell}}$ is deeply vertex-transitive, Corollary~\ref{propnkaclique} applies. Observe that $G_{\ell}$ is $2 \binom{2\ell+1}{\ell}$-regular, so $\ld_1 = 2 \binom{2\ell+1}{\ell}$. For the least eigenvalue, it follows from~\cite[Proposition 2.2]{BrouwerCIM18} that $\ld_n = \frac{-4}{\ell+1} \binom{2\ell -1}{ \ell}$ (notice that $\ld_n \leq -2$ for all $\ell \geq 2$). Also, one can check that $\Delta(\overline{G_{\ell}}) < 1$ for all $\ell \geq 2$. Thus, we obtain that
\[
\omega(G_{\ell}) \leq 1 - \frac{2 \binom{\ell+1}{\ell}}{ \frac{-4}{\ell+1} \binom{2\ell-1}{\ell}} = 2\ell + 2.
\]
Next, we present another proof that uses Delsarte's bound (Proposition~\ref{Delsarteclique}). While $G_{\ell}$ is not the graph of a single associate in the Hamming scheme, we can define an alternative association scheme of which $A(G_{\ell})$ is an associate. For each $j \in [\ell]$, we define
\[
B_j := H_{2, 2\ell+1, 2\ell+1-j} + H_{2, 2\ell+1, j}.
\]
Then one can check that
\[
\Hm' := \set{I, B_1, B_2, \ldots, B_{\ell}}
\]
is indeed a symmetric h.c.c.\ (and thus an association scheme). We mention the details of verifying (A4) (as (A1) to (A3) are relatively straightforward to check). Let $P := H_{2, 2\ell+1, 2\ell+1}$ for convenience. Notice that $P$ is a permutation matrix that satisfies $P^2 = I$, and that 
\[
H_{2, 2\ell+1, 2\ell+1 - j} = PH_{2, 2\ell+1, j} = H_{2, 2\ell+1, j}P
\]
for every $j \in \set{0, \ldots, 2\ell+1}$. Now, given $B_i, B_j \in \Hm'$,
\[
B_iB_j = (P+I)H_{2, 2\ell+1, i} (P+I)H_{2, 2\ell+1, j} = (P+I)H_{2, 2\ell+1, i}H_{2, 2\ell+1, j}.
\]
Since $H_{2, 2\ell+1, i}H_{2, 2\ell+1, j} \in \tn{Span}~\Hm_{2, 2\ell+1}$, and that $B_i = (P+I)H_{2, 2\ell+1, i}$ for every $i \in [\ell]$, it follows that $(P+I)H_{2, 2\ell+1, i}H_{2, 2\ell+1, j} \in \tn{Span}~\Hm'$.

Finally, since $B_{\ell}$ is the adjacency matrix of the graph $G_{\ell}$, Proposition~\ref{Delsarteclique} applies and we again obtain the bound~\eqref{HammingBound}.
\end{proof}

When is the bound in~\eqref{HammingBound} tight? First, we see that when $\ell$ is odd, \eqref{HammingBound} is tight if and only if there exists a $(2\ell+2)$-by-$(2\ell+2)$ Hadamard matrix. Given a $(2\ell+2)$-by-$(2\ell+2)$ Hadamard matrix $H$, removing one row from $H$ yields $2\ell+2$ column vectors that are binary, with pairwise distance $\ell$ or $\ell+1$. Conversely, given a clique $C$ in $G_{\ell}$ of size $2\l+2$, we may assume (since $G_{\ell}$ is vertex-transitive) that $C = \set{ 0 } \cup S_{\ell} \cup S_{\ell+1}$, where every vector in $S_{\l}$ has $\l$ ones, and every vector in $S_{\ell+1}$ has $\ell+1$ ones. Then we define the set of vectors $C' \subseteq \set{0,1}^{2\ell+2}$ where
\[
C' = \set{ 0 } \cup
\set{ \begin{bmatrix} v \\ 1 \end{bmatrix}  : v \in S_{\ell}} \cup
\set{ \begin{bmatrix} v \\ 0 \end{bmatrix}  : v \in S_{\ell+1}}.
\]
Since $\ell$ is odd, the vectors in $C'$ have pairwise distance $\l+1$, and one can construct a Hadamard matrix from $C'$. This shows that the bound in Proposition~\ref{propnkaHamming} is tight for infinitely many values of $\ell$. On the other hand, the bound is not tight for $\ell=2$ as one can check that $\omega(G_2) = 5$. It would be interesting to determine the values of $\ell$ for which the bound in~\eqref{HammingBound} is tight.

Also, the proof of Proposition~\ref{propnkaHamming} demonstrates the usefulness of finding commutative subschemes within a given h.c.c.\ Given a h.c.c.\ $\A$, we say that a set of matrices $\A'$ is a \emph{(commutative) subscheme} of $\A$ if every matrix in $\A'$ is the sum of a subset of matrices (not necessarily plural) in $\A$, and that $\A'$ is an association scheme in its own right. Thus, in the proof above, $\Hm'$ is a commutative subscheme of $\Hm_{2, 2\ell+1}$. Given an association scheme, it is easy to check if a certain contraction of its associates lead to a subscheme (see, for instance,~\cite[Section 4.2]{Godsil18}). We shall see in the next section that the situation is more complicated when we have an h.c.c.\ that is not necessarily commutative.

\section{Lift-and-project ranks for hypermatching polytopes}\label{secLandP}

In this section, we study $\LS_+$-relaxations related to matchings in hypergraphs. More elaboratively, given a $q$-uniform hypergraph $G$ and an integer $r \geq 1$, let $E_{r}(G)$ denote the set of matchings in $G$ of size $r$. That is, $S \in E_{r}(G)$ if $S = \set{S_1, \ldots, S_r}$ where $S_1, \ldots, S_r \in E(G)$ and are mutually disjoint. Consider the following optimization problem: Given a graph $G$, what is the maximum number of disjoint $r$-matchings in $G$ such that their union is also a matching in $G$? Notice that $E_{1}(G) = E(G)$ and so when $r=1$ this problem reduces to the classical matching problem of finding the largest subset of hyperedges that are mutually disjoint. Given a set of hyperedges $S$ and vertex $i$, we also say that $S$ \emph{saturates} $i$ if $i$ is contained in at least one hyperedge in $S$. Next, we define the polytope
\[
\MT_{r}(G) := \set{ x \in [0,1]^{E_{r}(G)} : \sum_{\substack{S \in E_{r}(G) \\ S~\tn{saturates}~i}} x_S \leq 1,~\forall i \in V(G)}.
\]
Then each integral vector in $\MT_{r}(G)$ corresponds to a set of $r$-matchings in $G$ where no vertex is saturated by more than one matching in this set. 

Consider $G := K_p^q$, the complete $q$-uniform hypergraph on $p$ vertices. That is, $G$ is the graph where $V(G) := [p]$ and $E(G) := [p]_q$. In this case, since each $r$-matching saturates $qr$ vertices, it is apparent that one can choose up to $\lfloor \frac{p}{qr} \rfloor$ disjoint $r$-matchings. Thus, we obtain that
\[
\max \set{\bar{e}^{\top}x : x \in \MT_r(G)_I} = \left\lfloor \frac{p}{qr}\right\rfloor.
\]
Next, we compute the optimal value of the linear program
\begin{equation}\label{MTLP1}
\max \set{ \bar{e}^{\top} x : x \in \MT_{r}(G)}.
\end{equation}
Let $[p]_q^r$ denote $E_r(K_p^q)$ for convenience. Notice that
\[
\left|[p]_q^r \right| = \frac{1}{r!}\binom{p}{q}\binom{p-q}{q}\cdots \binom{p-(r-1)q}{q} = \frac{p!}{r! (q!)^r (p-qr)!}.
\]
Also, every fixed vertex in $[p]$ is saturated by exactly $
\binom{p-1}{qr-1} |[qr]_q^r| $ distinct $r$-matchings. Thus, we see that the optimal value of~\eqref{MTLP1} is attained by the solution
\[
\bar{x} := \left(\binom{p-1}{qr-1} |[qr]_q^r|  \right)^{-1} \bar{e},
\]
giving an optimal value of
\begin{equation}\label{MTLP1a}
\bar{e}^{\top}\bar{x} = \frac{ |[p]_q^r|}{\binom{p-1}{qr-1} |[qr]_q^r| } = \frac{p}{qr}.
\end{equation}
Therefore, $\MT_{r}(G) \neq \MT_{r}(G)_I$ when $qr$ does not divide $p$, and one could apply $\LS_+$ to $\MT_{r}(G)$ to obtain better relaxations of $\MT_{r}(G)_I$. This leads naturally to the question of determining the $\LS_+$-rank of $\MT_{r}(G)$ when $p$ is not a multiple of $qr$. 

For the case $q=2$ and $r=1$, the given problem reduces to finding a maximum matching in ordinary graphs, which is well known to be solvable in polynomial time~\cite{Edmonds65}. Strikingly, it was shown~\cite{StephenT99} that for every positive integer $p$, the $\LS_+$-rank of $\MT_1(K_{2p+1})$ is $p$, providing what was then the first known family of instances where $\LS_+$ requires exponential effort to return the integer hull of a given set. In the lower-bound analysis therein, the authors explicitly described the eigenvalues and eigenvectors of their certificate matrix, which is closely related to matrices in $\tn{Span}~\J_{2p+1,2}$ (also see, \cite{BianchiPhD}).

Herein, we generalize their result to $r$-matchings in hypergraphs. Our main result of this section is the following.

\begin{theorem}\label{STgen1}
Given positive integers $p,q,r$ where $p >qr$ and $qr$ does not divide $p$, the $\LS_+$-rank of $\MT_{r}(K_{p}^q)$ is $\lfloor \frac{p}{qr} \rfloor$.
\end{theorem}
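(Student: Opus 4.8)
The plan is to generalize the approach of Stephen and Tun\c{c}el~\cite{StephenT99} for $\MT(K_{2p+1})$, proving the two bounds on the $\LS_+$-rank separately. For the \emph{upper bound}, I would argue that $\lfloor \frac{p}{qr} \rfloor$ applications of $\LS_+$ suffice. Write $p = qr \cdot \lfloor \frac{p}{qr} \rfloor + s$ with $1 \le s \le qr-1$. The natural strategy is induction: observe that $\MT_r(K_p^q)$ sits inside a polytope for which we understand how $\LS_+$ eliminates fractional vertices one coordinate at a time, and show that after conditioning on a single coordinate $x_S = 1$ (the ``project'' part of one $\LS_+$ step fixing an $r$-matching $S$), the face $\set{x \in \MT_r(K_p^q) : x_S = 1}$ is affinely equivalent to $\MT_r(K_{p-qr}^q)$ (since saturating the $qr$ vertices of $S$ removes them from play), while the face $x_S = 0$ just deletes one variable. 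An inductive hypothesis then gives that one more than the rank of $\MT_r(K_{p-qr}^q)$ iterations suffice; since $\lfloor \frac{p-qr}{qr} \rfloor = \lfloor \frac{p}{qr} \rfloor - 1$, this closes the induction, with the base case $p < qr$ (where $qr \nmid p$ forces the integer hull to be $\set{0}$-ish and rank $0$ or $1$) handled directly. One must also verify the standard fact that $\LS_+^k(P) \subseteq \bigcap_i \tn{(conditioning on } x_i) $ so that induction on faces is legitimate, which follows from the basic properties of $\LS_+$ in~\cite{LovaszS91}.

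For the \emph{lower bound}, which I expect to be the crux, I would exhibit a point $\bar{x} \notin \MT_r(K_p^q)_I$ that survives $\lfloor \frac{p}{qr}\rfloor - 1$ rounds of $\LS_+$. The natural candidate is a scaled all-ones vector, $\bar{x} = t \cdot \bar{e}$ for a carefully chosen $t$ slightly larger than what the integer hull permits, or more likely a vector supported on $r$-matchings avoiding a fixed small vertex set. For each round, I need to produce a certificate matrix $Y$ in $\mathbb{S}^{|E_r(K_p^q)|+1}$ meeting all $\LS_+$ conditions. The symmetry reduction developed in Section~\ref{secStabilityClique} applies: since $\MT_r(K_p^q)$ and the objective are invariant under $\Aut(K_p^q) = S_p$, and in fact under the richer symmetry group acting on $r$-matchings, any certificate may be symmetrized, forcing $Y$ to lie in $\tn{Span}$ of the relevant association scheme --- here the \emph{hypermatching scheme} on $[p]_q^r$ (the generalization of the Johnson scheme $\J_{2p+1,2}$ used in~\cite{StephenT99}). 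The positive-semidefiniteness of $Y$ then reduces to checking finitely many eigenvalue inequalities, one per eigenspace, using the (known, or to-be-derived) $P$-matrix of this scheme via the ``$Y \succeq B \iff P(y-b) \ge 0$'' principle recorded in Section~\ref{secPrelim}. The iterative structure is handled by showing that the surviving point in round $k$ restricts, on each face $x_S = 1$, to the surviving point for $\MT_r(K_{p-qr}^q)$ in round $k-1$, and gluing certificates accordingly.

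The main obstacle is the eigenvalue computation for the hypermatching scheme: unlike the Johnson and Hamming schemes, this scheme is \emph{non-commutative} in general (as the paper emphasizes and will study in Section~\ref{secHnkl}), so there is no single $P$-matrix and the Loewner-to-linear reduction does not apply verbatim. The workaround --- and I believe this is exactly the point the abstract advertises --- is to pass to a \emph{commutative subscheme obtained by contraction}: the certificate matrices we build will actually lie in $\tn{Span}$ of a commutative subscheme (because the extra symmetry among the $r$ blocks of a matching and within each block collapses many associates), and for that subscheme the eigenvalues \emph{can} be written down, e.g.\ by relating them to products/compositions of Johnson-scheme eigenvalues from Proposition~\ref{JohnsonEvalues}. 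So the real work is: (1) identify the correct commutative subscheme, (2) compute its $P$-matrix, and (3) check that the candidate $Y$, once written in this basis, has $P(y-b) \ge 0$ for the relevant cone constraints ($Y \succeq 0$, $Ye_i \in K$, $Y(e_0 - e_i) \in K$) for all $k \le \lfloor \frac{p}{qr}\rfloor - 1$. I would also need to confirm $\bar{x} \notin \MT_r(K_p^q)_I$, which is immediate from~\eqref{MTLP1a} and the divisibility hypothesis, and to double-check the base and edge cases of both inductions (small $p$, and $r=q=1$ recovering the odd-hole / Stephen--Tun\c{c}el picture as a sanity check).
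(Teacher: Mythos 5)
Your plan is essentially the paper's proof: the upper bound is obtained by the same face-conditioning induction (via Lemma 1.5 of Lov\'asz--Schrijver, with the $x_S=1$ face reducing to $\MT_r(K_{p-qr}^q)$), and the lower bound by showing the scaled all-ones point $\left(\binom{p-1}{qr-1}|[qr]_q^r|\right)^{-1}\bar{e}$ survives $\lfloor p/qr\rfloor-1$ rounds with a certificate glued from the smaller instance, whose positive semidefiniteness is checked inside the commutative subscheme obtained by contraction --- precisely $\tilde{\M}_{p,q,r}\cong \J_{p,qr}\wr\K$ of Proposition~\ref{HtildeHoverline}, whose eigenvalues reduce to Johnson-scheme eigenvalues. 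The only detail worth adjusting is the base case of the upper-bound induction ($qr<p<2qr$, giving rank at most $1$, rather than $p<qr$), and symmetrizing the certificate is unnecessary since one may simply exhibit the symmetric certificate directly.
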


We do note that, if we let $\nu(G)$ be the size of the largest matching in a graph $G$, then the size of the largest $r$-matching is simply $\lfloor \nu(G) / r \rfloor$. Thus, the maximum $r$-matching problem may seem to be an unnecessary generalization of the maximum matching problem. However, as we shall see, the introduction of the additional parameter $r$ allows us to work with a very rich h.c.c.\ based on matchings in hypergraphs, and analyzing the $\LS_+$-certificate matrices in this more general contexts make the tools presented in our proofs more readily translatable to future analyses of other semidefinite relaxations.

This section is structured as follows: We first introduce the hypergraph matching h.c.c.\ in Section~\ref{sec30}. While this scheme is not commutative in general, we will point out a number of its commutative subschemes that are easier to work with.  We then provide the proof of Theorem~\ref{STgen1} in Section~\ref{sec31} while pointing out some immediate consequences of the result. Finally, in Section~\ref{sec32} we analyze the $\LS_+$-rank of the $b$-matching polytope, which gives another example of using known eigenvalues from familiar association schemes to help analyze lift-and-project relaxations.

\subsection{The hypergraph matching homogeneous coherent configuration}\label{sec30}

Consider the complete hypergraph $G = K_p^q$. Recall that, given a permutation $\sigma : [p] \to [p]$ and a set $W \subseteq [p]$, we let $\sigma(W) := \set{ \sigma(j) : j \in W}$ for convenience. Next, we define the equivalence relation on $[p]_q^r \times [p]_q^r$ as follows:

\begin{definition}\label{defEquivRel}
Given $S,S',T,T' \in [p]_q^r$, define the relation $\sim$ where $(S,T) \sim (S',T')$ if there exists a permutation $\sigma : [p] \to [p]$ such that
\begin{itemize}
\item[(I1)] for every hyperedge $S_i \in S$, the hyperedge $\sigma(S_i) \in S'$, and
\item[(I2)] for every hyperedge $T_i \in T$, the hyperedge $\sigma(T_i) \in T'$.
\end{itemize}
\end{definition}

For instance, under this relation, there are $10$ equivalence classes in the cases where $q=2, r=2$, and $p \geq 8$ as illustrated in Figure~\ref{fig1}. Now let $X_0, \ldots, X_m \subseteq [p]_q^r \times [p]_q^r$ denote these equivalence classes, and define $|[p]_q^r|$-by-$|[p]_q^r|$ matrices $M_{p,q,r,i}$ where
\[
M_{p,q,r,i}[S,T] := 
\begin{cases}
1 \quad &\text{ if $(S,T) \in X_i$,}\\
0 \quad &\text{ otherwise. }
\end{cases}
\]
for every $i \in \{0,1, \ldots, m\}$. Then the \emph{$(p,q,r)$-hypermatching h.c.c.}\ is defined to be $\M_{p,q,r} := \set{ M_{p,q,r,i}}_{i=0}^m$. To see that this is indeed an h.c.c., consider the action of the symmetric group $\S_p$ on a matching $T = \set{ T_i }_{i=1}^r \in [p]_q^r$ defined such that, given $\sigma \in \S_p$,
\begin{equation}\label{groupaction1}
\sigma \cdot T = \set{ \sigma(T_i) }_{i=1}^{r}.
\end{equation}
We can further extend this action to pairs of $r$-matchings by defining that, given $S, T \in [p]_q^r$,
\begin{equation}\label{groupaction2}
\sigma \cdot (S,T)  = \left( \sigma \cdot S, \sigma \cdot T \right).
\end{equation}
Observe that there is a one-to-one correspondence between the orbits of this action on $[p]_q^r \times [p]_q^r$ and the aforementioned equivalence classes. In particular, the elements in the orbit associated with the isomorphism class $X_i$ are precisely the indices of the non-zero entries of the matrix $M_{p,q,r,i}$. Thus, it follows that $\M_{p,q,r}$ is a \emph{Schurian coherent configuration} (see~\cite{Cameron01}, for example), which assures that the properties (A1), (A3), and (A4) hold. Furthermore, notice that the group action defined above is transitive on $[p]_q^r$ (and thus only has one orbit), and therefore (A2) holds as well. Hence, $\M_{p,q,r}$ is indeed an h.c.c.

While we are unaware of previous literature that studies the h.c.c.\ $\M_{p,q,r}$ in its full generality (which is not commutative in general), there are some notable choices of $p,q,r$ where $\M_{p,q,r}$ specializes to familiar and commutative schemes. For instance, when $r=1$, each matching has exactly one hyperedge and thus can simply be seen as a $q$-subset of $[p]$, and so $\M_{p,q,1} = \J_{p,q}$ for all $p$ and $q$. Another case where $\M_{p,q,r}$ reduces to the Johnson scheme is when $q=1$, where we obtain $\M_{p,1,r} = \J_{p,r}$. In the cases when $p=qr$ (i.e., each matching is a partition of the $p$ vertices into $r$ subsets of size $q$), Godsil and Meagher~\cite{GodsilM10} showed that $\M_{qr,q,r}$ is a commutative scheme if and only if $q=2$, or $r=2$, or $(q,r) \in \set{ (3,3), (3,4), (4,3), (5,3)}$. They also showed that $\M_{p,q,r}$ is commutative when $q=2$ and $p=2r+1$, as well as when $r=2$ and $p=2q+1$.

As mentioned previously, the commutativity of a scheme $\A$ is a very desirable property that allows us to have a much better handle on the eigenvalues of the matrices in $\tn{Span}~\A$. Hence, given a non-commutative h.c.c., it can be helpful to instead work with subschemes of it that are commutative. While there are various notions of subschemes in the existing literature (see, for instance, \cite{Bailey04, Godsil18}), we will focus on obtaining subschemes by \emph{contraction}. More precisely, given an h.c.c.\ $\mathcal{A}$ and $S \subseteq \mathcal{A}$ a subset of the associates, we define the contraction of $S$ in $\A$ to be the collection of matrices:
\[
\A' := ( \A  \sm S) \cup \set{ \sum_{A_i \in S} A_i}.
\]
That is, we remove matrices in $S$ from $\A$ and replace them by a single matrix that is the sum of all matrices in $S$. If $\A'$ satisfies the properties (A1)-(A5) and thus is an association scheme in its own right, then we call $\A'$ a \emph{(commutative) subscheme} of $\A$. Note that the trivial scheme with just one associate (i.e., $\A = \set{I, J-I}$) is a commutative subscheme of every non-trivial scheme defined on the same ground set.

While $\M_{p,q,r}$ is not necessarily commutative, we point out that it must have at least one non-trivial commutative subscheme as long as $p > qr$. Observe that, given two matchings $S,T \in [p]_q^r$, $S \cup T$ saturates at least $qr$ and up to $2qr$ vertices. Now, for every $i \in \set{qr, qr+ 1, \ldots, 2qr}$, define the matrix $B_{i}\in \mR^{[p]_q^r \times [p]_q^r}$ such that 
\[
B_{i}[S,T] = \left\{
\begin{array}{ll}
1 &\tn{if $S \cup T$ saturates $i$ vertices;}\\
0 & \tn{otherwise.}
\end{array}
\right.
\]
Also, given a pair of symmetric association schemes $\A_1 = \{A^{(1)}_i\}_{i \in \mathcal{I}_1}$ and $\A_2 = \{A^{(2)}_i\}_{i \in \mathcal{I}_2}$ on ground sets $\Omega_1$ and $\Omega_2$ respectively, we define their \emph{wreath product} $\A_1 \wr \A_2$ to be the set consisting of the matrices:
\[ (A^{(1)}_i \otimes J_{|\Omega_2|}) \text{ for all $i\in\mathcal{I}_1$ where $A_i \neq I$} \quad \text{ and } \quad (I_{|\Omega_1|} \otimes A^{(2)}_i) \text{ for all } i \in \mathcal{I}_2. \]
It is known that the wreath product of two symmetric schemes must also be a symmetric scheme (see, for example,~\cite{Bailey04} for a proof). Then we have the following:

\begin{proposition}\label{HtildeHoverline}
Given positive integers $p,q,r$ where $p > qr$,
\begin{itemize}
\item[(i)]
The set of matrices
\[
\tilde{\M}_{p,q,r} = \set{I, B_{qr}-I} \cup \set{B_i}_{i=qr+1}^{\min\set{p, 2qr}}
\]
is a commutative subscheme of $\M_{p,q,r}$.
\item[(ii)]
If $\M_{qr, q,r}$ is a symmetric association scheme, let $\set{ M_1, \ldots, M_d} \subset \M_{p,q,r}$ be the associates corresponding to isomorphism classes where the union of two matchings saturates exactly $qr$ vertices. (Notice it then follows that $B_{qr} = I + \sum_{i=1}^d M_i$.) Then
\[
\overline{\M}_{p,q,r} = \set{I} \cup \set{M_i}_{i=1}^d \cup \set{B_i}_{i=qr+1}^{\min\set{p, 2qr}}
\]
is a commutative subscheme of $\M_{p,q,r}$.
\end{itemize}
\end{proposition}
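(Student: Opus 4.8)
The plan is to verify conditions (A1)--(A4) for each of the two proposed families of matrices, exploiting the fact that both $\tilde{\M}_{p,q,r}$ and $\overline{\M}_{p,q,r}$ are obtained from the Schurian scheme $\M_{p,q,r}$ by contracting associates. Since $\M_{p,q,r}$ is already known to be an association scheme, the only real content is to check that each $B_i$ (and in part (ii), each $M_i$) is in fact a sum of associates of $\M_{p,q,r}$, so that the contraction is well-defined; that the resulting set is closed under multiplication (A4); and that the resulting scheme is commutative (which we get for free once we show it is symmetric, by the remark in Section~\ref{secPrelim}).

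For part (i), I would first observe that the function $(S,T) \mapsto |S \cup T \text{ saturates}|$ (the number of vertices saturated by $S \cup T$) is invariant under the diagonal $\S_p$-action \eqref{groupaction2}, since permutations preserve saturation; hence each $B_i$ is constant on the orbits of that action, i.e., $B_i$ is a $0,1$-sum of the associates $M_{p,q,r,j}$. In particular $B_{qr} = I + \sum(\text{associates where }S\cup T\text{ saturates }qr)$, so $B_{qr} - I$ is a sum of associates, and the collection $\tilde{\M}_{p,q,r}$ consists of sums of associates that partition $[p]_q^r \times [p]_q^r$; this gives (A1), (A2) (each $B_i$ is symmetric, since ``$S\cup T$ saturates $i$ vertices'' is symmetric in $S,T$), and (A3). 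For (A4), the cleanest route is to note that $\tilde{\M}_{p,q,r}$ is precisely the partition of $[p]_q^r \times [p]_q^r$ induced by the \emph{number of saturated vertices}, and to show this partition is a coherent configuration directly: given a product $B_i B_j$, the $(S,T)$-entry counts matchings $U$ with $S\cup U$ saturating $i$ vertices and $U\cup T$ saturating $j$ vertices, and one checks this count depends only on the number of vertices saturated by $S\cup T$ --- again a consequence of $\S_p$-transitivity on pairs with a fixed saturation count. Equivalently, and perhaps more transparently, I would identify $\tilde{\M}_{p,q,r}$ with a fusion/contraction of $\M_{p,q,r}$ and invoke the standard fact that a union of associates gives a subscheme iff it is ``closed'' in the appropriate sense; since the saturation-count partition is coarser than the orbit partition and is $\S_p$-invariant, the merged relation $B_{qr}-I, B_{qr+1}, \ldots$ together with $I$ is a subscheme. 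Symmetry then yields commutativity.

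For part (ii), the extra hypothesis is that $\M_{qr,q,r}$ is a symmetric scheme. The point is that on $K_{qr}^q$ (where $p = qr$), a pair of matchings always saturates exactly $qr$ vertices, so the associates of $\M_{qr,q,r}$ are exactly what refines the block $B_{qr}$ in the general scheme; the matrices $M_1, \ldots, M_d$ are the ``lifts'' of the associates of $\M_{qr,q,r}$ to $\M_{p,q,r}$. Because $\M_{qr,q,r}$ is symmetric, each $M_i$ is symmetric, and the multiplicative closure of the $M_i$ among themselves mirrors that in $\M_{qr,q,r}$ (up to the irrelevant ``extra'' $p - qr$ vertices, which all the relevant matchings avoid in a coordinated way); meanwhile the $B_i$ for $i > qr$ behave as in part (i), and products between an $M_i$ and a $B_j$ again land in the span because the saturation count of $S \cup U$ is determined once we know the finer $M_i$-relation of $(S,U)$. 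So $\overline{\M}_{p,q,r}$ refines $\tilde{\M}_{p,q,r}$ by splitting the $B_{qr}-I$ block into $M_1, \ldots, M_d$, and the same fusion-scheme reasoning shows it is a symmetric, hence commutative, subscheme.

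I expect the main obstacle to be (A4), specifically making rigorous the claim that the products $B_i B_j$ and $M_i B_j$ stay in the span --- i.e., that the relevant structure constants depend only on the coarse relation type. The slick argument is to phrase everything in terms of orbits of $\S_p$ (or, for part (ii), to reduce the ``within $B_{qr}$'' computations to $\M_{qr,q,r}$ and handle the free vertices separately), so that closure follows from the Schurian property of $\M_{p,q,r}$ plus the observation that the proposed partitions are each a union of $\S_p$-orbits; the remaining bookkeeping (checking $B_{qr} = I + \sum_{i=1}^d M_i$, that the $B_i$ are symmetric, and that $\min\{p,2qr\}$ is the correct upper index) is routine. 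One subtle point worth stating carefully is why part (ii) genuinely requires $\M_{qr,q,r}$ to be symmetric rather than merely commutative: without symmetry the individual $M_i$ need not be symmetric, and although $\overline{\M}_{p,q,r}$ could still be commutative, the proof via ``symmetric $\Rightarrow$ commutative'' breaks and one would have to argue commutativity directly.
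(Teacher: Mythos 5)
The statement is true, but the step that carries all the weight --- closure under multiplication, i.e.\ (A4) --- is not actually justified in your write-up. You justify it by two appeals that are both false. First, $\S_p$ is \emph{not} transitive on pairs of matchings with a fixed saturation count: the orbits of the action \eqref{groupaction2} are the finer isomorphism classes $X_i$, and each fused relation generally merges several of them (for $q=r=2$, $B_5 = M_2+M_3$ merges the classes $(\overline{2},2')$ and $(\overline{4})$, and $B_6 = M_4+M_5+M_6+M_7$ merges four classes). So ``the count $(B_iB_j)[S,T]$ depends only on the saturation count of $S\cup T$'' does not follow from transitivity; it is exactly the thing that needs to be computed. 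Second, the principle ``the partition is coarser than the orbit partition and $\S_p$-invariant, hence a subscheme'' is also false: \emph{every} union of associates is $\S_p$-invariant, yet most contractions of $\M_{p,2,2}$ fail to be subschemes (this is the content of Table~\ref{table1} and Proposition~\ref{Hn22groupings}, and is why the paper flags that fusion is delicate for non-commutative schemes). Invariance only gives you that each $B_i$ is a sum of associates, i.e.\ that the contraction is well defined --- it gives nothing toward (A4). The same gap recurs in your part (ii): that $M_iM_j$ ``mirrors'' $\M_{qr,q,r}$ and that $M_iB_j$ stays in the span are asserted, not proved.

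The paper closes this gap by a structural identification rather than a fusion argument: encode each matching $T\in[p]_q^r$ as a pair $(W,S)$ with $W=\sat(T)\in[p]_{qr}$ and $S$ a perfect matching of $W$, and check that under this encoding $B_i \leftrightarrow J_{p,qr,i-qr}\otimes J$ and $B_{qr}-I \leftrightarrow I\otimes(J-I)$, so that $\tilde{\M}_{p,q,r}\cong \J_{p,qr}\wr\K$ with $\K$ the trivial scheme on $[qr]_q^r$; part (ii) is the same identification with $\K$ replaced by $\M_{qr,q,r}$, i.e.\ $\overline{\M}_{p,q,r}=\J_{p,qr}\wr\M_{qr,q,r}$. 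Commutativity then follows from the cited fact that a wreath product of symmetric schemes is symmetric. Your direct-count route can be repaired --- for instance $(B_iB_j)[S,T]$ equals $|[qr]_q^r|$ times the number of $W\in[p]_{qr}$ with $|W\cup\sat(S)|=i$ and $|W\cup\sat(T)|=j$, which is a Johnson-scheme intersection number depending only on $|\sat(S)\cap\sat(T)|$, and the products involving $B_{qr}-I$ (and, in (ii), the $M_i$, where the middle matching is forced to satisfy $\sat(U)=\sat(S)=\sat(T)$) can be handled similarly --- but those computations must actually be carried out; they are precisely what replaces the invalid transitivity appeal.
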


\begin{proof}
(i) We show that $\tilde{\M}_{p,q,r}$ is the wreath product of two simple schemes. Let $\mathcal{K} = \set{I, J-I}$ be the trivial association scheme defined on the ground set $[qr]_q^r$. Then we claim that
\[
\tilde{M}_{p,q,r} \cong \J_{p, qr} \wr \mathcal{K} =  \set{ J_{p,qr, i} \otimes J}_{i=1}^{qr} \cup \set{ I, I \otimes (J - I)}.
\]
To see this, first notice that each element of the ground set of the scheme $\J_{p, qr}$ is a subset of $[p]$ of size $qr$. Given such a set $W \in [p]_{qr}$ with the elements of $W$ being $w_1, w_2, \ldots, w_{qr}$ listed in ascending order, we can consider $W$ as the function from $[qr]$ to $[p]$ where $W(i) = w_i$ for every $i$. Also, given any matching $S \in [qr]_q^r$, each ordered pair $(W,S)$ naturally corresponds to a matching in  $[p]_q^r$, obtained from applying $W$ to all vertices in $S$ in the same way as described in \eqref{groupaction1}.

Now, let $W,W' \in [p]_{qr}$ and $S, S' \in [qr]_{q}^r$. Consider the two matchings $T = (W,S), T' =(W',S') \in [p]_{q}^r $. Notice that for every integer $i, qr \leq i \leq \min\set{p, qr}$,
\[
(J_{p,qr, i-qr} \otimes J)[T,T'] = 1
\]
if and only if $J_{p, qr, i-qr}(W,W') =1$ (i.e., $|W \cup W'| = i$) and $J[S,S'] = 1$ (which is true since $J$ is the matrix of all ones). This happens if and only if $T \cup T'$ saturates exactly $i$ vertices, which is the case exactly when $B_{i}[T,T']=1$.

Next, observe that
\[
( I \otimes (J - I))[T,T'] = 1
\]
if and only if $I[W,W']=1$ (i.e., $W=W'$), and $(J-I)[S,S'] = 1$ (i.e., $S \neq S'$). This is equivalent to saying that $T,T'$ are distinct matchings that saturate exactly the same $qr$ vertices, and hence $(B_{qr} - I)[T,T'] =1$. This proves that the scheme $\tilde{\M}_{p,q,r}$ is equivalent to $\J_{p,qr} \wr \K$.

(ii) In the case when $\M_{qr,q,r}$ is a symmetric association scheme itself, one can show that $\overline{\M}_{p,q,r} = \J_{p, qr} \wr \M_{qr,q,r}$ using essentially the same argument for (i), which implies that $\overline{\M}_{p.q.r}$ is a commutative subscheme of $\M_{p,q,r}$.
\end{proof}

The above result shows that when $p > qr$ and $\M_{qr,q,r}$ is a symmetric and non-trivial scheme (e.g., when $q=2$ and $r \geq 3$), then there are at least two distinct commutative subschemes of $\M_{p,q,r}$. In Section~\ref{secHnkl}, we will return to the question of which contractions of $\M_{p,q,r}$ lead to commutative subschemes.

For now, we will focus on the subscheme $\tilde{\M}_{p,q,r}$. Since it is simply the wreath product of the Johnson scheme and the trivial scheme, we can easily obtain the eigenvalues of any matrix $M \in \tn{Span}~\tilde{\M}_{p,q,r}$ as long as we can express $M$ as a linear combination of matrices in $\tilde{\M}_{p,q,r}$, which is very easy if we know the entries of $M$. This will be useful in our analyses of lift-and-project relaxations subsequently in this section.

On the other hand, while $\tn{Span}~\overline{\M}_{p,q,r}$ gives a broader set of matrices than $\tn{Span}~\tilde{\M}_{p,q,r}$ and still possesses the aligned-eigenspaces property, we have less of a grip on the eigenvalues of the matrices therein as the eigenvalues for the associates in $\M_{qr, q,r}$ are less well understood, even in the cases when it is indeed a commutative scheme.

\subsection{Packing matchings in hypergraphs}\label{sec31}

Having introduced the hypermatching h.c.c.\ $\M_{p,q,r}$ and discussed some of its commutative subschemes, we are now ready to prove Theorem~\ref{STgen1}. Again, the case where $q=2$ and $r=1$ was first shown in~\cite{StephenT99}. Our proof uses many similar ideas as theirs, as well as the knowledge of the eigenvalues of matrices in $\tilde{\M}_{p,q,r}$.

%\begin{theorem}\label{STgen1}
%Given positive integers $p,q,r$ where $p >qr$ and $qr$ does not divide $p$, the $\LS_+$-rank of $\MT_{r}(K_{p}^q)$ is $\lfloor \frac{p}{qr} \rfloor$.
%\end{theorem}

\begin{proof}[Proof of Theorem~\ref{STgen1}]
For convenience, let $G := K_{p}^q$ and  $P := \MT_{r}(G)$ throughout this proof. We first prove the lower bound of the rank. Let $\a_0 := \left( \binom{p-1}{qr-1} |[qr]_q^r| \right)^{-1}$. Notice that when $qr < p < 2qr$, $\a_0 \bar{e} \in P \sm P_I$ (as explained in~\eqref{MTLP1a} when we computed the optimal value of~\eqref{MTLP1}) and so $\LS_+$-rank of $P$ is at least $1$. Thus, for the rest of our lower-bound argument, we may assume that $p \geq 2qr$.

Next, we show that $\a_0 \bar{e}  \in \LS_+^{\ell} (P)$ for all integers $\ell < \lfloor \frac{p}{qr} \rfloor$. Note that $\a_0 \bar{e} \not\in P_I$ if $qr$ does not divide $p$, so the claim above would imply that $P$ has $\LS_+$-rank at least $\lfloor \frac{p}{qr} \rfloor$.

We prove our claim by induction on $\ell$. The base case $\ell=0$ is immediate as $\a_0 \bar{e} \in \LS_+^0(P) = P$ for all $p,q$, and $r$. 

For the inductive step, let $\a_1 := \left( \binom{p-qr-1}{qr-1}|[qr]_q^r| \right)^{-1}$, and so $\a_1 \bar{e}  \in \LS_+^{\ell-1} \left(\MT_{r}(K_{p-qr}^q) \right)$ by the inductive hypothesis. Also, given a set of edges $C \subseteq E(G)$, we let $\sat(C) \subseteq V(G)$ be the set of vertices  saturated by the set $C$. Define the certificate matrix  
\[
Y := \begin{bmatrix}
1 & \a_0 \bar{e}^{\top} \\
\a_0 \bar{e}& Y'
\end{bmatrix}
\]
where the $|[p]_q^r|$-by-$|[p]_q^r|$ matrix $Y'$  has entries
\[
Y'[S,T] = 
\begin{cases}
\a_0 & \tn{if $S=T$;}\\
\a_0\a_1 & \tn{if $\sat(S) \cap \sat(T) = \es$;}\\
0 & \tn{otherwise.}
\end{cases}
\]
We show that $Y$ satisfies all conditions imposed by $\LS_+$. First, it is apparent that $Y$ is symmetric, and $Ye_0 =  \tn{diag}(Y) = \begin{bmatrix} 1 \\ \a_0 \bar{e} \end{bmatrix}$.

Next, given $S \in [p]_q^r$, define the set
\[
F := \set{x \in [0,1]^{[p]_q^r} : x_S = 1, x_T = 0~\tn{for all $T$ where $\sat(S) \cap \sat(T) \neq \es$}}.
\]
By the inductive hypothesis $\a_1 \bar{e} \in \LS_+^{\ell-1}\left(\MT_{r}(K_{p-qr}^q) \right)$. Moreover, observe that the projection of $P$ onto the coordinates not restricted to $0$ or $1$ in $F$ is exactly $\MT_{r}(K_{p-qr}^q)$. Since $\LS_+$ satisfies the general property that $\LS_+(P \cap F) \subseteq \LS_+(P) \cap F$ for every face $F$ of the unit hypercube, if we define vector $w_S \in  \mR^{[p]_q^r}$ where
\[
w_S[T] = 
\begin{cases}
1 & \tn{if $S= T$;}\\
\a_1 & \tn{if $\sat(S) \cap \sat(T) = \es$;}\\
0 & \tn{otherwise,}
\end{cases}
\]
it then follows that $w_S \in \LS_+^{\ell-1}(P)$. Thus, $Ye_S = \a_0 \begin{bmatrix}1 \\ w_S\end{bmatrix} \in K\left( \LS_+^{\ell-1}(P) \right)$. Next, we show that $Y(e_0 - e_S) \in K\left( \LS_+^{\ell-1}(P) \right)$. We claim that, for every matching $S \in [p]_q^r$,
\begin{equation}\label{STgen1eq1}
\sum_{T \in [p]_q^r} \frac{|\sat(S) \cap \sat(T)|}{qr} Ye_T = Ye_0.
\end{equation}
Notice that the coefficient of $Ye_S$ in the left hand side of~\eqref{STgen1eq1} is $\frac{|\sat(S) \cap \sat(S)|}{qr} = \frac{qr}{qr} = 1$. Thus, the above implies that $Y(e_0-e_S)$ can be expressed as a non-negative linear combination of vectors in $\set{ Ye_T : T \in [p]_q^r}$. Since we have shown that $Ye_T$ is contained in the convex cone $K\left( \LS_+^{\ell-1}(P) \right)$ for every $T \in [p]_q^r$, it follows from~\eqref{STgen1eq1} that $Y(e_0 - e_S) \in K\left( \LS_+^{\ell-1}(P) \right)$ as well.

Now, to prove~\eqref{STgen1eq1}, observe that given a fixed $S \in [p]_q^r$ and for every integer $i \in \set{0, 1, \ldots, q}$,
\[
\sum_{\substack{ T \in [p]_q^r \\ |\sat(T) \cap \sat(S)| = i}} \left( Ye_T \right)[W] = 
\begin{cases}
\binom{qr}{q} \binom{p-qr}{qr-q} |[qr]_q^r| \a_0 &\tn{ if $W=0$;}\\
\a_0 + \binom{qr-i}{q} \binom{p-2qr+i}{qr-q} |[qr]_q^r| \a_0\a_1 &\tn{ if $|\sat(W) \cap \sat(S)| = i$;}\\
 \binom{qr-j}{q} \binom{p-2qr+j}{qr-q} |[qr]_q^r| \a_0\a_1 &\tn{ if $|\sat(r) \cap \sat(i)| = j \neq i$.}
 \end{cases}
\]
Thus, for every $S,W \in [p]_q^r$ where $|\sat(W) \cap \sat(S)| = j$,
\begin{eqnarray*}
&& \sum_{T \in [p]_q^r} \frac{|\sat(S) \cap \sat(T)|}{qr} \left(Ye_T\right)[W] \\
&=& \sum_{i=0}^q \sum_{\substack{ T \in [p]_q^r \\ |\sat(T) \cap \sat(S)| = i}} \frac{i}{qr}\left(Ye_S \right)[W] \\
&=& \frac{j}{qr}\a_0 + \sum_{q=0}^k \frac{q}{qr} \binom{qr-j}{q} \binom{p-2qr+j}{qr-q} |[qr]_q^r| \a_0\a_1\\
&=& \frac{j}{qr}\a_0 + \frac{qr-j}{qr}  \sum_{i=0}^q \binom{qr-j-1}{q-1} \binom{p-2qr+j}{qr-q} |[qr]_q^r| \a_0\a_1\\
&=& \frac{j}{qr}\a_0 + \frac{qr-j}{qr} \binom{p-qr-1}{qr-1} |[qr]_q^r| \a_0\a_1\\
&=& \frac{j}{qr}\a_0 + \frac{qr-j}{qr} \a_0 \\
&=& \a_0 = \left(Ye_0\right)[W].
\end{eqnarray*}
By a similar argument, one can show that
\[
\sum_{T \in [p]_q^r} \frac{|\sat(S) \cap \sat(T)|}{qr} \left( Ye_T\right)[0] = 1 = \left(Ye_0\right)[0],
\]
which completes the proof of~\eqref{STgen1eq1}.

Finally, we show that $Y \succeq 0$. When $p \geq 2qr$, we have
\[
Y = \begin{bmatrix}\frac{qr}{p} \bar{e}^{\top} \\ I \end{bmatrix} Y' \begin{bmatrix} \frac{qr}{p} \bar{e} & I \end{bmatrix}.
\]
Thus, to show that $Y \succeq 0$, it suffices to prove that $Y' \succeq 0$. Observe that
\[
Y' =\a_0 \left( I + \a_1 B_{2qr} \right),
\]
where $B_{2qr} \in \tilde{\M}_{p,q,r}$ was defined before Proposition~\ref{HtildeHoverline}. We also showed in the proof of Proposition~\ref{HtildeHoverline} that $B_{2qr}$ has the same eigenvalues as $J_{p,qr, qr} \otimes J$. From Proposition~\ref{JohnsonEvalues}, $J_{p,qr, qr}$ has eigenvalues $(-1)^j \binom{p-qr-j}{qr-j}$ for $j = 0, \ldots, qr$. Also,  $J$ here is the $|[qr]_q^r|$-by-$|[qr]_q^r|$ matrix of all-ones and thus has eigenvalues $|[qr]_q^r|$ and $0$. Hence, the eigenvalues of $Y'$ are
\[
\a_0 \left( 1+   (-1)^j   \binom{ p-qr-j}{qr-j} |[qr]_q^r| \a_1  \right) = \a_0  \left( 1+   (-1)^j  \frac{\binom{ p-qr-j}{qr-j}}{ \binom{p-qr-1}{qr-1} }  \right),
\]
which are non-negative for all $j \in \set{ 0,1, \ldots ,qr}$. Thus, $Y'$ is positive semidefinite, and so is $Y$. This establishes that $\a_0\bar{e} \in \LS_+^{\ell}(P)$ for all $\ell < \frac{p}{qr}$, and thus shows that $P$ has $\LS_+$-rank at least $\lfloor \frac{p}{qr} \rfloor$.

	We next turn to prove the upper bound on the rank of $P$. By~\cite[Lemma 1.5]{LovaszS91}, if an inequality is valid for $\set{x \in P : x_S =1}$ for all $S \in [p]_q^r$, then it is valid for $\LS_+(P)$. Thus, it follows that $P = \MT_{r}(K_p^q)$ has $\LS_+$-rank at most $1$ if $p < 2qr$. By the same rationale, the lemma implies that if $\MT_{r}(K_{p}^q)$ has $\LS_+$-rank $\ell$, then $\MT_{r}(K_{p+qr}^q)$ has $\LS_+$-rank at most $\ell+1$. Thus, we see that $\MT_{r}(K_p^q)$ has $\LS_+$-rank at most $\lfloor \frac{p}{qr} \rfloor$.
\end{proof}

In general, one of the greatest challenges in establishing lower-bound results for semidefinite lift-and-project relaxations is to verify the positive semidefiniteness of a given family of certificate matrices. In the case of the proof of Theorem~\ref{STgen1}, this task was made relatively simple by observing $Y$ has a full-rank symmetric minor $Y'$ and that $Y'$ is a simple linear combination of associates in $\tilde{\M}_{p,q,r}$.

An immediate implication of the proof of Theorem~\ref{STgen1} is the following integrality gap result on $\MT_{r}(K_p^q)$

\begin{corollary}\label{STgen1cor}
Let $P = \MT_{r}(K_p^q)$ where $p > qr$ and $qr$ does not divide $p$. Then
\[
\frac{\max \set{ \bar{e}^{\top}x :  x \in \LS_+^{\ell}(P)}}{\max \set{ \bar{e}^{\top}x :  x \in P_I}} = \frac{p/qr}{\lfloor p/qr \rfloor} =
1 + \frac{ (p~\tn{mod}~qr)}{ p - (p~\tn{mod}~qr)} 
\]
for all $\ell \in \set{ 0, 1, \ldots, \lfloor \frac{p}{qr} \rfloor -1}$.
\end{corollary}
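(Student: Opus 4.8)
The plan is to extract Corollary~\ref{STgen1cor} directly from the two facts already established in the proof of Theorem~\ref{STgen1}, namely the explicit values of the optimal objective of the linear program over $P$ and over $P_I$, together with the invariance of that objective under the first $\lfloor p/qr \rfloor - 1$ applications of $\LS_+$. Concretely, the denominator is handled by the observation made just before~\eqref{MTLP1a}: every integral point of $P = \MT_r(K_p^q)$ corresponds to a set of vertex-disjoint $r$-matchings, and since each $r$-matching saturates exactly $qr$ vertices, at most $\lfloor p/qr \rfloor$ of them can be packed; this bound is attained, so $\max\set{\bar{e}^{\top}x : x \in P_I} = \lfloor p/qr \rfloor$.

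For the numerator, the key point is that the proof of Theorem~\ref{STgen1} shows $\a_0 \bar{e} \in \LS_+^{\ell}(P)$ for every $\ell < \lfloor p/qr \rfloor$, where $\a_0 = \left( \binom{p-1}{qr-1}|[qr]_q^r| \right)^{-1}$, and that $\bar{e}^{\top}(\a_0 \bar{e}) = p/(qr)$ by~\eqref{MTLP1a}. Hence $\max\set{\bar{e}^{\top}x : x \in \LS_+^{\ell}(P)} \geq p/(qr)$ for all such $\ell$. For the matching upper bound I would invoke the containment $\LS_+^{\ell}(P) \subseteq \LS_+^0(P) = P$ (monotonicity of the operator, stated in the preliminaries), so that $\max\set{\bar{e}^{\top}x : x \in \LS_+^{\ell}(P)} \leq \max\set{\bar{e}^{\top}x : x \in P}$, which equals $p/(qr)$ by~\eqref{MTLP1a}. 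Combining the two inequalities gives $\max\set{\bar{e}^{\top}x : x \in \LS_+^{\ell}(P)} = p/(qr)$ for every $\ell \in \set{0, 1, \ldots, \lfloor p/qr \rfloor - 1}$.

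Taking the ratio then yields $\dfrac{p/qr}{\lfloor p/qr \rfloor}$ immediately. The final equality $\dfrac{p/qr}{\lfloor p/qr \rfloor} = 1 + \dfrac{(p \bmod qr)}{p - (p \bmod qr)}$ is a routine arithmetic identity: writing $p = qr\lfloor p/qr \rfloor + (p \bmod qr)$, one has $qr \lfloor p/qr \rfloor = p - (p \bmod qr)$, so $\dfrac{p/qr}{\lfloor p/qr \rfloor} = \dfrac{p}{qr\lfloor p/qr \rfloor} = \dfrac{p}{p - (p\bmod qr)} = 1 + \dfrac{p\bmod qr}{p - (p\bmod qr)}$.

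There is essentially no obstacle here — the corollary is a packaging of quantities already computed in the body of the theorem's proof. The only mild subtlety to state carefully is that both the lower bound (via the explicit feasible point $\a_0\bar{e} \in \LS_+^{\ell}(P)$) and the upper bound (via $\LS_+^{\ell}(P) \subseteq P$) hold simultaneously for the full range $0 \le \ell \le \lfloor p/qr \rfloor - 1$, so the optimal value is genuinely constant across those levels; this is exactly the content of the integrality gap not shrinking until the last iteration, which is where the rank lower bound of Theorem~\ref{STgen1} came from.
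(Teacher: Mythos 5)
Your proof is correct and follows essentially the same route as the paper: both rest on the feasible point $\a_0\bar{e}$ from Theorem~\ref{STgen1} for the lower bound and the nestedness of the $\LS_+$ hierarchy for the upper bound, the paper merely phrasing the sandwich in terms of the integrality gap being non-increasing in $\ell$ while you bound the numerator $\max\set{\bar{e}^{\top}x : x \in \LS_+^{\ell}(P)}$ directly by $p/qr$ from both sides. The concluding arithmetic identity, which the paper leaves implicit, is verified correctly.
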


\begin{proof}
First of all, it is obvious that $\max \set{\bar{e}^{\top}x : x \in P_I} =\lfloor \frac{p}{qr} \rfloor$ and $\max \set{\bar{e}^{\top}x: x \in P} =\frac{p}{qr}$. This establishes the above integrality gap for $\ell=0$. Next, as shown in Theorem~\ref{STgen1}, $\left(\binom{p-1}{qr-1} |[qr]_q^r|\right)^{-1} \bar{e} \in \LS_+^{\ell}(P)$ for all $\ell <  \lfloor \frac{p}{qr} \rfloor $. Thus, the corresponding integrality gap for $\LS_+^{ \lfloor p/qr \rfloor -1}(P)$ is greater than or equal to that of $P$. Since $\LS_+^{\ell+1}(P) \subseteq \LS_+^{\ell}(P)$ for all $\ell$, the integrality gap must be a non-increasing function of $\ell$. This shows that the gap is identical for all values of $\ell  \in \set{0, \ldots, \lfloor \frac{p}{qr} \rfloor -1}$, and our claim follows.
\end{proof}

Next, recall from Proposition~\ref{propnkaJohnson} that the $\LS_+$-rank of the fractional stable set polytope of the graph $\overline{G(J_{p,q,q})}$ is at least $2$ when $p \geq 2q+1$ and is not a multiple of $q$. With Theorem~\ref{STgen1}, we can now determine the exact rank of this set.

\begin{corollary}\label{STgen1cor1b}
Given positive integers $p,q$ where $q \geq 2, p \geq 2q+1$, and $q$ does not divide $p$, the $\LS_+$-rank of $\FRAC(\overline{G(J_{p,q,q})})$ is $\lfloor \frac{p}{q} \rfloor$.
\end{corollary}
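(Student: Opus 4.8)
The plan is to identify $\FRAC(\overline{G(J_{p,q,q})})$ with $\MT_r(K_p^q)$ for the special case $r=1$, and then invoke Theorem~\ref{STgen1} directly. Recall from the discussion in Section~\ref{sec30} that $\M_{p,q,1} = \J_{p,q}$, reflecting the fact that a $1$-matching in $K_p^q$ is just a single $q$-subset of $[p]$. So I would start by writing down the precise correspondence: the ground set $E_1(K_p^q)$ is $[p]_q$, and for $S, T \in [p]_q$ one has $\sat(S) = S$, $\sat(T) = T$, so that the saturation constraint ``$\sum_{S~\tn{saturates}~i} x_S \le 1$ for all $i \in [p]$'' of $\MT_1(K_p^q)$ becomes ``$\sum_{S \ni i} x_S \le 1$ for all $i$.'' I would then argue that this system of inequalities is, up to the obvious change of variables, exactly the edge-constraint system defining $\FRAC(\overline{G(J_{p,q,q})})$: the vertices of $\overline{G(J_{p,q,q})}$ are the $q$-subsets of $[p]$, and $\set{S,T}$ is an edge of $\overline{G(J_{p,q,q})}$ precisely when $S \cap T \ne \es$, i.e. when $S$ and $T$ share at least one element of $[p]$.

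The mild subtlety is that $\FRAC$ is defined by the \emph{pairwise} edge inequalities $x_S + x_T \le 1$ for intersecting $S,T$, whereas $\MT_1$ uses the \emph{vertex} inequalities $\sum_{S \ni i} x_S \le 1$. These two polytopes need not coincide for an arbitrary graph, but here the intersection graph of the $q$-subsets of $[p]$ has a clique-cover structure that makes them equal: for each $i \in [p]$, the family $\set{S \in [p]_q : i \in S}$ is a clique of $\overline{G(J_{p,q,q})}$, and every edge of $\overline{G(J_{p,q,q})}$ lies in at least one such clique (any edge $\set{S,T}$ with $S \cap T \ne \es$ is covered by any $i \in S \cap T$). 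Hence the vertex inequalities of $\MT_1$ are exactly the clique inequalities associated to this cover, and since each such inequality is implied by the pairwise edge inequalities while each edge inequality $x_S + x_T \le 1$ follows from the clique inequality at a common vertex together with nonnegativity, the two polytopes are identical. (Equivalently: on $\set{0,1}$ points both describe the independent sets of $\overline{G(J_{p,q,q})}$, i.e. antichains of pairwise-intersecting $q$-sets, and I should note the two LP relaxations literally define the same region, not merely the same integer hull — this is what is needed for the $\LS_+$-rank claim.)

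Once the identification $\FRAC(\overline{G(J_{p,q,q})}) = \MT_1(K_p^q)$ is established, I would apply Theorem~\ref{STgen1} with $r = 1$: the hypotheses $p > q \cdot 1 = q$ and $q \nmid p$ are exactly what is assumed here (note $p \ge 2q$ certainly gives $p > q$), and the theorem yields $\LS_+$-rank equal to $\lfloor p/(q\cdot 1)\rfloor = \lfloor p/q \rfloor$. Since the $\LS_+$ operator depends only on the polytope (not on any particular inequality description), the ranks of the two identified polytopes agree, and the corollary follows.

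\textbf{Main obstacle.} There is no deep obstacle; the only thing requiring care is the polytope identification in the preceding paragraph — specifically checking that the \emph{LP relaxations} $\FRAC(\overline{G(J_{p,q,q})})$ and $\MT_1(K_p^q)$ coincide as sets (not just their integer hulls), which is what makes their $\LS_+$-ranks equal. This comes down to the clique-cover observation above and is entirely routine. The substantive content of the corollary is borne by Theorem~\ref{STgen1}, which is already in hand.
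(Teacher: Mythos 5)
There is a genuine gap: your identification $\FRAC(\overline{G(J_{p,q,q})}) = \MT_1(K_p^q)$ is false, and the false direction is exactly the one you wave through. It is true that each edge inequality $x_S + x_T \leq 1$ (for intersecting $S,T$) follows from the vertex inequality $\sum_{S \ni i} x_S \leq 1$ at any $i \in S \cap T$ together with nonnegativity, so $\MT_1(K_p^q) \subseteq \FRAC(\overline{G(J_{p,q,q})})$. But the reverse implication --- that the clique inequality for the clique $\set{S \in [p]_q : i \in S}$ is ``implied by the pairwise edge inequalities'' --- does not hold: a clique inequality on a clique of size at least $3$ is strictly stronger than the edge inequalities it contains. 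Concretely, with $q \geq 2$ and $p \geq 2q$ there are $\binom{p-1}{q-1} \geq 3$ $q$-sets through a fixed element $i$; putting value $\tfrac{1}{2}$ on three of them and $0$ elsewhere satisfies every constraint of $\FRAC(\overline{G(J_{p,q,q})})$ but violates the vertex constraint at $i$. So the containment is strict, the two LP relaxations do not coincide, and you cannot transfer the $\LS_+$-rank of $\MT_1(K_p^q)$ to $\FRAC(\overline{G(J_{p,q,q})})$ by a polytope identification.

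The corollary still follows, but it needs the two halves argued separately, which is what the paper does. For the lower bound, use only the containment you do have together with the fact that both polytopes have the same integer hull $\STAB(\overline{G(J_{p,q,q})}) = \MT_1(K_p^q)_I$ (via the line-graph correspondence): since $\LS_+$ preserves containment, any fractional point surviving $\ell$ rounds in $\MT_1(K_p^q)$ (as produced in the proof of Theorem~\ref{STgen1}) also survives $\ell$ rounds in $\FRAC(\overline{G(J_{p,q,q})})$, so the rank of $\FRAC$ is at least that of $\MT_1$, namely $\lfloor p/q \rfloor$. For the upper bound, containment gives you nothing for the larger set; instead you must rerun the upper-bound argument of Theorem~\ref{STgen1} directly on $\FRAC(\overline{G(J_{p,q,q})})$: fixing $x_S = 1$ forces $x_T = 0$ for every $T$ meeting $S$, leaving the analogous relaxation on $[p] \setminus S$, and \cite[Lemma 1.5]{LovaszS91} then yields rank at most $\lfloor p/q \rfloor$ by the same induction. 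Your appeal to Theorem~\ref{STgen1} alone, resting on the claimed equality of the relaxations, does not close either half.
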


\begin{proof}
Notice that $\overline{G(J_{p,q,q})}$ is the line graph of $K_{p}^q$, and so there is a natural one-to-one correspondence between matchings in $K_p^q$ and stable sets in $\overline{G(J_{p,q,q})}$. Therefore, we know that
\[
\MT_1(K_{p}^q)_I = \STAB(\overline{G(J_{p,q,q})}).
\]
Moreover, it is obvious from their definitions that
\[
\MT_1(K_{p}^q) \subseteq \FRAC(\overline{G(J_{p,q,q})}).
\]
Since $\LS_+$ preserves containment, this implies that the $\LS_+$-rank of $\FRAC(\overline{G(J_{p,q,q})})$ is at least that of $\MT_1(K_{p}^q)$. On the other hand, the same rank upper-bound argument in the proof of Theorem~\ref{STgen1} also applies for $\FRAC(\overline{G(J_{p,q,q})})$. Thus, we conclude that when $q \nmid p$, the $\LS_+$-rank of $\FRAC(\overline{G(J_{p,q,q})})$ is exactly $\lfloor \frac{p}{q} \rfloor$.
\end{proof}

Finally, we remark that our lower-bound analysis in the proof of Theorem~\ref{STgen1} also applies to the covering variant of the same problem. If we define the $r$-matching covering polytope to be
\[
\MT^C_{r}(G) := \set{ x \in [0,1]^{E_{r}(G)} : \sum_{\substack{S \in E_{r}(G) \\ S~\tn{saturates}~i}} x_S \geq 1,~\forall i \in V(G)},
\]
then each integral vector in $\MT^C_{r}(G)$ gives a set of $r$-matchings in $G$ whose union form an edge cover. Then we have the following result:

\begin{corollary}\label{STgen1cor2}
Let $P := \MT^C_{r}(K_p^q)$ where $p >2qr$ and $qr$ does not divide $p$. Then the $\LS_+$-rank of $P$ is at least $\lfloor \frac{p}{qr} \rfloor$.
\end{corollary}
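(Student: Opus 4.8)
The plan is to establish the lower bound on the $\LS_+$-rank of the covering polytope $\MT^C_r(K_p^q)$ by adapting, more or less verbatim, the lower-bound half of the proof of Theorem~\ref{STgen1}. The key observation is that the only feature of $P = \MT_r(K_p^q)$ used in that argument is that the fractional point $\a_0 \bar{e}$ (with $\a_0 = (\binom{p-1}{qr-1}|[qr]_q^r|)^{-1}$) lies in $P \setminus P_I$, together with the structural facts that restricting a coordinate $x_S$ to $1$ (and forcing to $0$ all $x_T$ with $\sat(S)\cap\sat(T)\neq\es$) yields a face whose relevant projection is a copy of the same type of polytope on the smaller complete hypergraph $K_{p-qr}^q$. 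Both of these facts hold for the covering polytope as well: the optimal LP solution of the covering LP is again proportional to $\bar e$ by the same counting in \eqref{MTLP1a} (now a $\min$ instead of a $\max$, but the vertex-saturation counts are identical), and deleting the $qr$ vertices saturated by a fixed matching $S$ leaves $\MT^C_r(K^q_{p-qr})$ as the induced face-polytope.

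First I would set $G := K_p^q$, $P := \MT^C_r(G)$, and note that since $p > 2qr$ we never encounter the degenerate small cases; the induction is on $\ell$ with hypothesis $\a_0'\bar e \in \LS_+^\ell(\MT^C_r(K^q_{p-\ell qr}))$ for a suitably scaled $\a_0'$, or more simply I would show directly that $\a_0\bar e \in \LS_+^\ell(P)$ for all $\ell < \lfloor \tfrac{p}{qr}\rfloor$. The base case $\ell = 0$ is the LP computation. For the inductive step I would use exactly the same certificate matrix $Y$, with the same minor $Y' = \a_0(I + \a_1 B_{2qr})$ where $\a_1 = (\binom{p-qr-1}{qr-1}|[qr]_q^r|)^{-1}$ and $B_{2qr} \in \tilde\M_{p,q,r}$. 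The three conditions to verify are: (a) $Ye_0 = \diag(Y)$, immediate; (b) $Ye_S, Y(e_0 - e_S) \in K(\LS_+^{\ell-1}(P))$ for every $S$; (c) $Y \succeq 0$. Condition (c) is literally unchanged, since $Y'$ and its Johnson-scheme eigenvalue computation via Proposition~\ref{JohnsonEvalues} do not reference whether the polytope is packing or covering — the same factorization $Y = \begin{bmatrix}\tfrac{qr}{p}\bar e^\top \\ I\end{bmatrix} Y'\begin{bmatrix}\tfrac{qr}{p}\bar e & I\end{bmatrix}$ works when $p \geq 2qr$.

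The one place where care is needed is condition (b), specifically the containment $Ye_S \in K(\LS_+^{\ell-1}(P))$. For the packing polytope this used the monotonicity property $\LS_+(P\cap F) \subseteq \LS_+(P)\cap F$ together with the fact that the induced face projects to $\MT_r(K^q_{p-qr})$. For the covering polytope I would argue the same way: the face $F = \{x : x_S = 1,\ x_T = 0\ \forall T\text{ with }\sat(S)\cap\sat(T)\neq\es\}$ of the cube, intersected with $P$, projects onto the coordinates indexed by matchings disjoint from $S$ to give exactly $\MT^C_r(K^q_{p-qr})$ — the covering constraints at the $qr$ vertices saturated by $S$ are automatically satisfied, and the remaining constraints are precisely those of $\MT^C_r$ on the remaining $p-qr$ vertices. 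Then the inductive hypothesis gives $w_S \in \LS_+^{\ell-1}(P)$, hence $Ye_S = \a_0\begin{bmatrix}1\\w_S\end{bmatrix} \in K(\LS_+^{\ell-1}(P))$, and $Y(e_0 - e_S) \in K(\LS_+^{\ell-1}(P))$ follows from the identity \eqref{STgen1eq1}, whose verification is purely a counting identity among the entries of $Y$ and is therefore identical. I expect the main (and only real) obstacle to be confirming carefully that the face-projection of the covering polytope is again a covering polytope of the same family — in particular that no spurious constraints appear and that $\a_0\bar e$, restricted, is the correctly-scaled fractional point $\a_0'\bar e$ for $K^q_{p-qr}$ — but this is a direct check rather than a conceptual difficulty. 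Concluding: $\a_0\bar e \in \LS_+^\ell(P)$ for all $\ell < \lfloor \tfrac{p}{qr}\rfloor$, and since $\a_0\bar e \notin P_I$ when $qr\nmid p$, the $\LS_+$-rank of $P$ is at least $\lfloor \tfrac{p}{qr}\rfloor$.
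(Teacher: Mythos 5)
Your proposal is correct and follows essentially the same route as the paper: the paper's proof of Corollary~\ref{STgen1cor2} simply observes that $\a_0\bar{e}$ satisfies every covering constraint with equality, so the same certificate matrix $Y$ and the same induction from Theorem~\ref{STgen1} (with faces projecting to $\MT^C_r(K^q_{p-qr})$) go through verbatim, which is exactly what you spell out. The only detail the paper makes explicit that you leave as an assertion is why $\a_0\bar{e}\notin P_I$: every integral point of $P$ uses at least $\lceil p/(qr)\rceil$ matchings, so every point of $P_I$ has $\bar{e}^{\top}x \geq \lceil p/(qr)\rceil > p/(qr) = \bar{e}^{\top}(\a_0\bar{e})$ when $qr \nmid p$.
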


\begin{proof}
Following the notation used in the proof of Theorem~\ref{STgen1}, notice that the fractional vector $\bar{x} = \a_0 \bar{e}$ used therein is also contained in $P$, as it satisfies each of the $p$ vertex-incidence constraints of $P$ with equality. Hence, one can use the same certificate matrix $Y$ and induction process to show that $\bar{x} \in \LS_+^{\ell}(P)$, for all $\ell < \lfloor \frac{p}{qr} \rfloor$. When $qr$ does not divide $p$, it is easy to see that 
\[
\max\set{ \bar{e}^{\top}x : x \in P_I} = \left\lceil \frac{p}{qr} \right\rceil > \frac{p}{qr} = \bar{e}^{\top} \bar{x}.
\]
This shows that $P$ has $\LS_+$-rank at least $\lfloor \frac{p}{qr} \rfloor$.
\end{proof}

\subsection{The $b$-hypermatching problem}\label{sec32}

Next, we turn to a different generalization of the classical matching problem, and study its corresponding $\LS_+$-relaxations. Given a $q$-uniform hypergraph $G$ and a positive integer $b$, we say that $S \subseteq E(G)$ is a \emph{$b$-matching} if every vertex has degree at most $b$ in the subgraph of $G$ with edge set $S$. The maximum $b$-matching problem is to find the largest $b$-matching in a given graph. Note that this problem reduces to the maximum matching problem when $b =1$. A natural polyhedral relaxation of this problem is
\[
\bMT(G) := \set{ x \in [0,1]^{E(G)} : \sum_{S \in E(G), S \ni i} x_S \leq b,~\forall i \in V(G)}.
\]
Then there is a one-to-one correspondence between the integral vectors in $\bMT(G)$ and the $b$-matchings of $G$. 

For the complete graph $G= K_p^q$ and any integer $b < p$, it is easy to see that there exists a $b$-regular subgraph in $G$ if and only if $bp$ is a multiple of $q$, in which case an optimal $b$-matching would contain exactly $\frac{bp}{q}$ hyperedges. On the other hand, if $q \nmid bp$, then the largest $b$-matching has size $\lfloor \frac{bp}{q}\rfloor$. Now since $b \binom{p-1}{q-1}^{-1} \bar{e} \in \bMT(G)$,  $\max\set{ \bar{e}^{\top}x : x \in \bMT(G)} \geq \frac{bp}{q}$, and so $\bMT(G) \neq (\bMT(G))_I$ when $q$ does not divide $bp$. 

Recently, Kurpisz et al.~\cite{KurpiszLM18} also studied lift-and-project relaxations of the $b$-matching problem on hypergraphs. Therein, their focus is on the $\Las$ operator (due to Lasserre~\cite{Lasserre01} and yields tighter relaxations than $\LS_+$ in general),  applied to a relaxation that is $\bMT(K_p^q)$ with an additional linear constraint. They proved a $\Las$-rank upper bound of $\max\set{b, \frac{1}{2} \lfloor \frac{bp}{q} \rfloor}$ for that relaxation, and that the bound is tight in the cases where $b=1, q=2$ and $4 | p-1$. Their results are incomparable with Theorem~\ref{STgen4} below.

Herein, we focus on the $\LS_+$-rank of $\bMT(K_p^q)$. First, we remark that the ideas in establishing the upper bound in the proof of Theorem~\ref{STgen1} can be used to show that $\bMT(K_p^q)$ has $\LS_+$-rank at most $\lfloor \frac{bp}{q} \rfloor$. Next, we establish a lower bound for the $\LS_+$-rank below, showing that this is another case where $\LS_+$ is not efficient at computing the integer hull of the given relaxation. Once again, a key component of our proof involves the known eigenvalues of the Johnson scheme.

\begin{theorem}\label{STgen4}
Let $b,p,q$ be positive integers where $q$ does not divide $bp$ and $q^2 \geq b$. Then the $\LS_+$-rank of $\bMT(K_p^q)$ is at least $\lfloor \frac{p-b-q+1}{2q}\rfloor +1$ for all $p \geq b+q-1$.
\end{theorem}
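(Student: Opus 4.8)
The plan is to follow the inductive lower-bound strategy from the proof of Theorem~\ref{STgen1} (itself adapting Stephen and Tun\c{c}el~\cite{StephenT99}), but over the Johnson scheme $\J_{p,q}$ on the hyperedge set $[p]_q$ of $K_p^q$ instead of over $\tilde{\M}_{p,q,r}$. Let $\gamma := b\binom{p-1}{q-1}^{-1}$, which lies in $(0,1]$ precisely because $p\geq b+q-1$, and consider the fractional point $\bar x := \gamma\bar e\in\bMT(K_p^q)$; it satisfies every vertex-degree inequality with equality and has $\bar e^\top\bar x = bp/q$, which exceeds $\lfloor bp/q\rfloor = \max\set{\bar e^\top x : x\in\bMT(K_p^q)_I}$ whenever $q\nmid bp$, so $\bar x\notin\bMT(K_p^q)_I$. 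It thus suffices to show $\bar x\in\LS_+^{\ell}\big(\bMT(K_p^q)\big)$ whenever $p\geq 2q\ell + b + q - 1$; the rank bound $\lfloor \frac{p-b-q+1}{2q}\rfloor +1$ then follows by taking $\ell$ as large as possible. This I would prove by induction on $\ell$, with the base case $\ell=0$ immediate. The step reduces $\bMT(K_p^q)$ to $\bMT(K_{p-2q}^q)$: the appearance of $2q$ rather than $q$, and of the hypothesis $q^2\geq b$, is caused by the fact that fixing a single hyperedge $S$ to value $1$ in a $b$-matching lowers the degree budget at the $q$ vertices of $S$ only to $b-1$, so a further block of $q$ vertices is needed to absorb this residual budget before a clean complete-hypergraph instance reappears, and $q^2\geq b$ guarantees such a block carries enough hyperedges.

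For the inductive step I would construct a certificate $Y := \begin{bmatrix} 1 & \bar x^\top \\ \bar x & Y'\end{bmatrix}$ with the principal block $Y'$ in the Bose--Mesner algebra of $\J_{p,q}$, i.e., $Y' = \gamma\sum_{i=0}^{q} c_i J_{p,q,i}$ with $c_0 = 1$ so that $Ye_0 = \tn{diag}(Y) = \begin{bmatrix}1\\\bar x\end{bmatrix}$, the remaining $c_i$'s to be chosen so as to meet the cone and semidefiniteness conditions. Because $Y'$ is a polynomial in the Johnson associates, $(Ye_S)[T]$ depends only on $|S\cap T|$. The $c_i$'s should be selected so that, for each $S$, the vector $Ye_S/\gamma$ lies on a face $F_S$ of the cube (obtained by fixing $x_S=1$ together with a suitable block of further $0/1$ coordinates) whose induced residual polytope is a copy of $\bMT(K_{p-2q}^q)$, on which $Ye_S/\gamma$ restricts to the corresponding point $b\binom{p-2q-1}{q-1}^{-1}\bar e$; the induction hypothesis together with the inclusion $\LS_+(P\cap F)\subseteq\LS_+(P)\cap F$, iterated, then puts $Ye_S$ in $K\big(\LS_+^{\ell-1}(\bMT(K_p^q))\big)$. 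For the complementary vectors I would, as in the proof of Theorem~\ref{STgen1}, verify an identity $\sum_{T\in[p]_q}\frac{|S\cap T|}{q}\,Ye_T = Ye_0$ for every $S$, which displays $Y(e_0-e_S)$ as a non-negative combination of the $Ye_T$ (the coefficient of $Ye_S$ being $1$) and whose proof is a counting computation with the row sums of the $J_{p,q,i}$.

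The final, and I expect the most delicate, ingredient is $Y\succeq 0$. A Schur complement reduces this to $Y' - \gamma^2\bar e\bar e^\top\succeq 0$; since $\bar e\bar e^\top = \sum_i J_{p,q,i}$ is again in $\tn{Span}\,\J_{p,q}$, positive semidefiniteness becomes a finite set of scalar inequalities, one per eigenspace $j\in\set{0,1,\dots,q}$, obtained by feeding the explicit Johnson eigenvalues of Proposition~\ref{JohnsonEvalues} into $\gamma\sum_i(c_i-\gamma)\lambda_i^{(j)}$, where $\lambda_i^{(j)}$ denotes the $j$-th eigenvalue of $J_{p,q,i}$. The $j=0$ inequality is an equality, reflecting that $\bar x$ sits on the boundary, and the crux is to show that the values of $c_1,\dots,c_q$ forced by the cone-membership analysis keep the remaining alternating binomial sums nonnegative for all admissible $p$ --- this is exactly where $q^2\geq b$ (and $p\geq b+q-1$) are used. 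Since only a lower bound on the rank is claimed, no matching upper bound is needed; the companion upper bound $\lfloor bp/q\rfloor$ would, as remarked before the statement, follow from the same face-induction used for the upper bound in Theorem~\ref{STgen1}.
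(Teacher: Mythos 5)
Your overall skeleton does match the paper's: same fractional point $b\binom{p-1}{q-1}^{-1}\bar e$, same induction with step $p \to p-2q$, a certificate matrix whose principal block lies in $\tn{Span}~\J_{p,q}$, and positive semidefiniteness via a Schur-type factorization plus the Johnson eigenvalues of Proposition~\ref{JohnsonEvalues}. But the two cone-membership steps --- the actual heart of the inductive step --- are not correctly executed, and for $b\geq 2$ they cannot be executed the way you describe. First, $Ye_S$: since $Y'$ is in the Bose--Mesner algebra of $\J_{p,q}$, the entry $(Ye_S)[T]$ depends only on $|S\cap T|$, so all $q(p-q)$ hyperedges $T$ with $|S\cap T|=q-1$ carry the same value. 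For $Ye_S/\gamma$ to lie on a single face $F_S$ of the kind you describe, that common value would have to be $0$ or $1$; value $1$ grossly violates the degree constraints, and value $0$ (i.e.\ ignoring the residual budget $b-1$ at the vertices of $S$) produces a certificate for which $Y(e_0-e_S)$ already fails the LP constraints: at a vertex $v\in S$ one gets degree $\frac{b-\gamma}{1-\gamma}>b$ when $b\geq 2$. So the intersecting hyperedges must receive small \emph{fractional} values, and $Ye_S/\gamma$ is not on any such face; what the paper does instead is write it as an \emph{average} of many $0/1$-completed points $w_{i,j,S}$ (a $b$-regular subgraph on the $2q$ vertices $i\cup j$ built from $b-1$ of the $q^2$ crossing pairs, plus the scaled all-ones vector on $K_{p-2q}^q$), each of which lies in $\LS_+^{\ell-1}(P)$ by the face property and induction. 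This averaging is exactly where $q^2\geq b$ is used (not in the eigenvalue computation, which only needs $p\geq b+q-1$), and it is what forces the values $\frac{b-1}{q(p-q)}$, $\frac{b-1}{q\binom{p-q}{q-1}}$, $\frac{bp-2bq+q}{q\binom{p-q}{q}}$ on the three nontrivial intersection classes.

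Second, $Y(e_0-e_S)$: the identity you propose, $\sum_{T}\frac{|S\cap T|}{q}\,Ye_T = Ye_0$, is false for $b\geq 2$ --- already in the $0$-th coordinate the left-hand side equals $\frac{1}{q}\sum_T|S\cap T|\,\a_0 = \binom{p-1}{q-1}\a_0 = b \neq 1$. The correctly normalized identity $\sum_{T}\frac{|S\cap T|}{qb}\,Ye_T = Ye_0$ does hold (because the conditional vectors saturate every degree constraint with equality), but then the coefficient of $Ye_S$ is only $\frac{1}{b}$, so it yields $Ye_0-\frac{1}{b}Ye_S \in K\big(\LS_+^{\ell-1}(P)\big)$, which does not give $Y(e_0-e_S)$ in the cone. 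The paper handles this condition by a second explicit construction: points $\overline w_{i,j,S}$ built from $b$ (rather than $b-1$) crossing pairs, averaged over $j$ and $S$, exhibiting $Y(e_0-e_S)/(1-\a_0)$ directly as a convex combination of points in $\LS_+^{\ell-1}(P)$. Finally, since your cone analysis never pins down the coefficients $c_1,\dots,c_q$, the PSD verification is also left open; with the paper's explicit values it reduces, as you anticipated, to nonnegativity of the Johnson-eigenvalue expressions, which is where $p\geq b+q-1$ (at the eigenspace $j=q$) enters.
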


\begin{proof}
For convenience, let $P := \bMT(K_p^q), \a_0 := b \binom{p-1}{q-1}^{-1}$, and $\ell := \lfloor \frac{p-b-q+1}{2q} \rfloor$. By induction on $\ell$, we shall show that $\a_0 \bar{e} \in \LS_+^{\ell}(P)$. Since $\a_0 \bar{e} \not\in P_I$ when $q$ does not divide $bp$, the claim above would imply that $P$ has $\LS_+$-rank at least $\ell+1$.

For the base case $\ell = 0$, since $p \geq b+q-1$, $b \binom{p-1}{q-1}^{-1} \leq 1$, and so $b \binom{p-1}{q-1}^{-1} \bar{e} \in P = \LS_+^0(P)$.

Next, for the inductive step, we aim to show that $\a_0 \bar{e} \in \LS_+^{\ell} (P)$ follows from the inductive hypothesis $b\binom{p-2q-1}{q-1}^{-1} \bar{e} \in \LS_+^{\ell-1} \left(\bMT(K_{p-2q}^q)\right)$. Define $\a_1 := \frac{b-1}{q(p-q)}, \a_2:= \frac{ b-1}{q \binom{p-q}{q-1}}$, and $\a_3:= \frac{bp-2bq+q}{q \binom{p-q}{q}}$, and the certificate matrix  
\[
Y = \begin{bmatrix}
1 &\a_0 \bar{e}^{\top} \\
\a_0\bar{e}& Y'
\end{bmatrix}
\]
where $Y'$ is the $|[p]_q|$-by-$|[p]_q|$ matrix with entries 
\[
Y'[i,j] = 
\begin{cases}
\a_0  & \tn{if $i=j$;}\\
\a_0\a_1 & \tn{if $|i \cap j| = q-1$;}\\
\a_0\a_2 & \tn{if $|i \cap j| = 1$;}\\
\a_0\a_3 & \tn{if $i \cap j = \es$;}\\
0 & \tn{otherwise.}
\end{cases}
\]
We show that $Y$ satisfies all conditions imposed by $\LS_+$. First, it is apparent that $Y=Y^{\top}$ and $Ye_0 =  \tn{diag}(Y) = \begin{bmatrix} 1 \\ \a_0 \bar{e} \end{bmatrix}$. Next, we show that $Ye_i \in K\left( \LS_+^{\ell-1}(P) \right)$ for every edge $i$. Given two disjoint edges $i,j$, define
\[
\S_{i,j} := \set{ \set{v_1, v_2} : v_1 \in i, v_2 \in j}.
\]
That is, $\S_{i,j}$ consists of the $q^2$ 2-vertex sets where one vertex belongs to $i$ and the other belongs to $j$. Then, given $i,j$ and $S \subseteq \S_{i,j}$ where $|S| = b-1$, we construct a vector $w_{i,j,S} \in \mR^{[p]_q}$ as follows:
\[
w_{i,j,S}[h] = \begin{cases}
1 & \tn{if $h =i$ or $h=j$;}\\
1 & \tn{if $h= (i \sm s) \cup (s \cap j)$ for some $s \in S$;}\\
1 & \tn{if $h= (j \sm s) \cup (s \cap i)$ for some $s \in S$;}\\
b \binom{p-2q-1}{q-1}^{-1} & \tn{if $h$ is disjoint from $i \cup j$;}\\
0 & \tn{otherwise.}\\
\end{cases}
\]
Notice that the hyperedges receiving $1$'s in $w_{i,j,S}$ form a $b$-regular subgraph on the vertices $i \cup j$. Also, choosing $S$ requires $b-1 \leq q^2$, which is implied by the assumption $b \leq q^2$. 

Then we know by the inductive hypothesis that $w_{i,j,S} \in \LS_+^{\ell-1}(P)$. Next, consider the vector
\[
z := \frac{1}{\binom{p-q}{q}\binom{q^2}{b-1}} \sum_{\substack{j \in E(G) \\ j \cap i = \es}} \sum_{\substack{S \subseteq \S_{i,j}\\ |S| = b-1}} w_{i,j,S}.
\]
Since $w_{i,j,S}[i] = 1$ for all $j,S$ included in the sum above, $z[i] = 1$. Likewise, for all $h \in E(G)$ where $2 \leq |h \cap i| \leq q-2, w_{i,j,S}[h] = 0$. Hence, $z[h] =0$ for these edges as well. Next, by symmetry of the underlying complete graph $G$, we know that $z[h] = z[h']$ if there is an automorphism on $G$ that maps vertices in $i$ to itself while mapping $h$ to $h'$. Thus, there must exist constants $\b_1, \b_2, \b_3$ such that 
\[
z[h] =
\begin{cases}
 \b_1 & \tn{if $|h \cap i| = q-1$;}\\
 \b_2& \tn{if $|h \cap i| = 1$;}\\
 \b_3 & \tn{if $|h \cap i| = 0$.}
\end{cases}
\]
 Now, notice that $\sum_{h \in E(G), |h \cap i| =q-1} w_{i,j,S} = b-1$ for all $j$ and $S$. Thus,
\[
\b_1 = \frac{b-1}{ |\set{h \in E(G) : |h \cap i| = q-1}|} = \frac{b-1}{q(p-q)} = \a_1.
\]
One can likewise show that $\b_2=\a_2$ and $\b_3 = \a_3$, which shows that $Ye_i = \a_0 \begin{bmatrix} 1 \\ z \end{bmatrix}$. Since $z \in \LS_+^{\ell-1}(P)$ (due to it being a convex combination of $w_{i,j,S}$'s, points inside the convex set $\LS_+^{\ell-1}(P)$), we obtain that $Ye_i \in K \left( \LS_+^{\ell-1}(P)\right)$.

We next show that $Y(e_0-e_i) \in K\left( \LS_+^{\ell-1}(P) \right)$ with a similar argument. Given disjoint edges $i,j$ and $S \subseteq \S_{i,j}$ where $|S| = b$, define $\overline{w}_{i,j,S} \in \mR^{[p]_q}$ such that
\[
\overline{w}_{i,j,S}[h] = \begin{cases}
1 & \tn{if $h= (i \sm s) \cup (s \cap j)$ for some $s \in S$;}\\
1 & \tn{if $h= (j \sm s) \cup (s \cap i)$ for some $s \in S$;}\\
b \binom{p-2q-1}{q-1}^{-1} & \tn{if $h$ is disjoint from $i \cup j$;}\\
0 & \tn{otherwise.}\\
\end{cases}
\]
(This construction is where we need $q^2 \geq b$.) Next, if we define 
\[
\overline{z} := \frac{1}{\binom{p-q}{q}\binom{q^2}{b}} \sum_{\substack{j \in E(G) \\ j \cap i = \es}} \sum_{\substack{S \subseteq \S_{i,j}\\ |S| = b}} \overline{w}_{i,j,S},
\]
one can apply a similar argument to the above and deduce that 
\[
\overline{z}[h]= \begin{cases}
0 & \tn{if $h=i$;}\\
0 & \tn{if $2\leq |h \cap i| \leq q-2$;}\\
\frac{b}{q(p-q)} =  \frac{\a_0}{1-\a_0}(1-\a_1) & \tn{if $|h \cap i|= q-1$;}\\
\frac{b}{q\binom{p-q}{q-1}} =  \frac{\a_0}{1-\a_0}(1-\a_2) & \tn{if $|h \cap i|= 1$;}\\
\frac{bp-2bq}{q \binom{p-q}{q}} =  \frac{\a_0}{1-\a_0}(1-\a_3) & \tn{if $|h \cap i| = 0$.}
\end{cases}
\]
Thus, $Y(e_0 - e_i) = (1-\a_0) \begin{bmatrix} 1 \\ \overline{z} \end{bmatrix} \in K\left( \LS_+^{\ell-1}(P) \right)$.

Finally, we show that $Y \succeq 0$. Observe that
\[
Y = \begin{bmatrix} \frac{q}{bp} \bar{e}^{\top} \\ I \end{bmatrix} Y' \begin{bmatrix} \frac{q}{bp} \bar{e} & I \end{bmatrix}.
\]
Thus, to show that $Y \succeq 0$, it suffices to prove that $Y' \succeq 0$. Now, notice that $Y' \in \tn{Span}~\J_{p,q}$. In fact,
\[
Y' =\a_0 \left( I + \a_1 J_{p,q,1}+ \a_2 J_{p,q,q-1} + \a_3 J_{p,q,q} \right).
\]
Applying Proposition~\ref{JohnsonEvalues} (using formula~\eqref{JohnsonEvalues1} for the case $i=q-1$ and~\eqref{JohnsonEvalues2} for the case $i=1$), one obtains that the eigenvalues of $Y'$ are
\begin{eqnarray}
\nonumber && \a_0 \bigg(1 + \a_1((q-j)(p-q-j) - j)  \\
\nonumber && +\a_2 \left((-1)^j(p-2q+2))\binom{p-q+1-j}{q-1-j} + (-1)^{j+1} q \binom{p-q-j}{q-j} \right) \\
\nonumber && + \a_3 (-1)^j \binom{p-q-j}{q-j}\bigg)  \\
\label{STgen4evalue}&=& \a_0 \left( 1+ (-1)^j\frac{\binom{p-q-j}{q-j}}{\binom{p-q-1}{q-1}} \right) \left( 1+ \frac{(b-1)((q-j)(p-q-j)-j)}{q(p-q)} \right).
\end{eqnarray}
Notice that~\eqref{STgen4evalue} is non-negative for all $p \geq 2q$ and for all $j < q$. When $j=q$, ~\eqref{STgen4evalue} is non-negative when 
\[
\left( 1+ \frac{(b-1)((q-q)(p-q-q)-q)}{q(p-q)} \right) \geq 0 \iff p \geq q+b-1,
\]
which is an assumption in the hypothesis. This finishes our proof.
\end{proof}

We remark that the lower bound given in Theorem~\ref{STgen4} is not always tight. For instance, when $b=1$, the theorem gives $\bMT(K_p^q)$ has $\LS_+$-rank at least $\lfloor \frac{p-q}{2q} \rfloor + 1$, while Theorem~\ref{STgen1} (specialized to $r=1$) gives a better rank lower bound of $\lfloor \frac{p}{q} \rfloor$. It is possible one can improve the bound in Theorem~\ref{STgen4} (and/or weaken the assumption $b \leq q^2$) by using a different certificate matrix, perhaps by involving more associates in the Johnson scheme. Of course, this could potentially lead to a more challenging analysis of its eigenvalues.

Also, for the same reason why Theorem~\ref{STgen1} implies Corollary~\ref{STgen1cor2}, the proof of Theorem~\ref{STgen4} can be easily adapted to show that the same $\LS_+$-rank lower bound applies for the covering variant of the $b$-matching problem.

\begin{corollary}\label{STgen4cor1}
Let $b,p,q$ be positive integers where $q$ does not divide $bp$ and $q^2 \geq b$, and let $G = K_p^q$. Then the $\LS_+$-rank of 
\[
\bMT^C(G) := \set{ x \in [0,1]^{E(G)} : \sum_{S \in E(G), S \ni i} x_S \geq b,~\forall i \in V(G)}
\]
is at least $\lfloor \frac{p-b-q+1}{2q}\rfloor +1$ for all $p \geq b+q-1$.
\end{corollary}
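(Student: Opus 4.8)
The plan is to adapt the proof of Theorem~\ref{STgen4} almost verbatim, in the same way that Corollary~\ref{STgen1cor2} was obtained from Theorem~\ref{STgen1}. Write $P := \bMT^C(K_p^q)$, $\a_0 := b\binom{p-1}{q-1}^{-1}$, and $\ell := \lfloor\frac{p-b-q+1}{2q}\rfloor$. First I would record that $\a_0\bar{e}\in P$: each vertex of $K_p^q$ lies in exactly $\binom{p-1}{q-1}$ hyperedges, so $\a_0\bar{e}$ meets every covering constraint with equality. I would also note that $\a_0\bar{e}\notin P_I$ whenever $q\nmid bp$, because summing the covering constraints over all $p$ vertices shows that $\bar{e}^{\top}x\ge\lceil bp/q\rceil$ is valid for $P_I$, while $\bar{e}^{\top}(\a_0\bar{e})=bp/q$ is strictly smaller. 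Hence it suffices to prove $\a_0\bar{e}\in\LS_+^{\ell}(P)$, which gives $\LS_+$-rank at least $\ell+1$.

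Next I would prove $\a_0\bar{e}\in\LS_+^{\ell}(P)$ by induction on $\ell$, using exactly the block certificate matrix $Y=\begin{bmatrix}1 & \a_0\bar{e}^{\top}\\ \a_0\bar{e} & Y'\end{bmatrix}$ with $Y'\in\tn{Span}~\J_{p,q}$ constructed in the proof of Theorem~\ref{STgen4} (the base case $\ell=0$ being $\a_0\le1$, which follows from $p\ge b+q-1$). Two ingredients carry over literally because they never refer to the polytope: the identities $Ye_0=\tn{diag}(Y)=\begin{bmatrix}1\\\a_0\bar{e}\end{bmatrix}$, and the positive semidefiniteness of $Y$, which via the Schur complement reduces to $Y'=\a_0\left(I+\a_1 J_{p,q,1}+\a_2 J_{p,q,q-1}+\a_3 J_{p,q,q}\right)\succeq0$, whose eigenvalues were shown to be nonnegative (under $p\ge q+b-1$) using Proposition~\ref{JohnsonEvalues}. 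What remains is to re-verify $Ye_i\in K(\LS_+^{\ell-1}(P))$ and $Y(e_0-e_i)\in K(\LS_+^{\ell-1}(P))$ for every hyperedge $i$.

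The one genuinely new point is that the auxiliary vectors $w_{i,j,S}$ (with $|S|=b-1$) and $\overline{w}_{i,j,S}$ (with $|S|=b$) now lie in $\LS_+^{\ell-1}(P)$ rather than in $\LS_+^{\ell-1}$ of the packing polytope. For this I would invoke the general property $\LS_+(P\cap F)\subseteq\LS_+(P)\cap F$ for faces $F$ of the cube, together with an observation specific to the covering setting: fixing to $1$ the hyperedges of the $b$-regular subgraph of $w_{i,j,S}$ (resp.~$\overline{w}_{i,j,S}$) on the $2q$ vertices of $i\cup j$, and fixing to $0$ every other hyperedge meeting $i\cup j$, leaves on the free coordinates (hyperedges disjoint from $i\cup j$) precisely the polytope $\bMT^C(K_{p-2q}^q)$. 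Indeed, one checks that each of the $2q$ vertices of $i\cup j$ already has degree exactly $b$ in that fixed $b$-regular subgraph, so its covering constraint is met with equality and imposes nothing on the free part, while the covering constraint at each vertex outside $i\cup j$ reduces exactly to the corresponding constraint of $\bMT^C(K_{p-2q}^q)$; this is also where the hypothesis $q^2\ge b$ enters, to accommodate the $b$-element subsets $S\subseteq\S_{i,j}$. Applying the inductive hypothesis $b\binom{p-2q-1}{q-1}^{-1}\bar{e}\in\LS_+^{\ell-1}(\bMT^C(K_{p-2q}^q))$ then lifts back through the face to give $w_{i,j,S},\overline{w}_{i,j,S}\in\LS_+^{\ell-1}(P)$, and the convex-combination identities expressing $Ye_i$ and $Y(e_0-e_i)$ in terms of these vectors go through unchanged. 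This face-projection bookkeeping --- checking that the covering constraints at $i\cup j$ are "used up" with equality by the $b$-regular subgraph, rather than being trivially inherited as the packing constraints were in Theorem~\ref{STgen1} --- is the one place I expect to need care; everything else is mechanical.
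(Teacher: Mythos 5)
Your proposal is correct and matches the paper's intended argument: the paper proves this corollary with a one-sentence remark that the proof of Theorem~\ref{STgen4} adapts just as Theorem~\ref{STgen1} adapted to Corollary~\ref{STgen1cor2}, and your write-up fills in precisely those details (the point $\a_0\bar{e}$ tight on every covering constraint, the same certificate matrix and eigenvalue computation, and the face argument showing the fixed $b$-regular subgraph on $i\cup j$ saturates the covering constraints there so the free coordinates give $\bMT^C(K_{p-2q}^q)$).
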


\ignore{
Would be nice if I could find an example where $\binom{n-1}{k-1}^{-1}\bar{e} \in \LS_+^p(\MT(K^k_n))$ but $b\binom{n-1}{k-1}^{-1}\bar{e} \in \LS_+^p(\bMT(K^k_n))$, or vice versa.
}

\section{More on the hypermatching homogeneous coherent configuration $\M_{p,q,r}$}\label{secHnkl}

After working with the simple commutative subscheme $\tilde{\M}_{p,q,r}$ in Section~\ref{secLandP}, we now look into the full h.c.c.\ $\M_{p,q,r}$ and try to gain a better understanding of it. In Section~\ref{sec41}, we discuss some combinatorial characterizations of the associates of $\M_{p,q,r}$, and in particular enumerate the associates in $\M_{p,2,r}$ via counting a certain type of integer partitions. This will in turn help us study which contractions of $\M_{p,2,r}$ result in symmetric subschemes in Section~\ref{sec42}.

\subsection{Characterizing associates in $\M_{p,q,r}$}\label{sec41}

Herein, we look into characterizing associates in $\M_{p,q,r}$ via some familiar combinatorial objects. For convenience, let $a_{p,q,r}$ be the number of equivalence classes in the relation defined in Definition~\ref{defEquivRel}. We first focus on ordinary graphs (i.e., the case $q=2$) and map associates in $\M_{p,2,r}$ to a specific type of integer partitions, before returning to discuss the case for arbitrary $q$ later in this subsection.

When $q=2$ and $p=2r$, it is known that there is a one-to-one correspondence between the isomorphism classes and even partitions of $2r$ (i.e., the number ways to write $2r$ as the sum of a non-increasing sequence of even positive integers). The following result extends this to general values of $p$.

\begin{proposition}\label{propEquivPartitions}
For all positive integers $p,r$ where $p \geq 2r$, $a_{p,2,r}$ is equal to the number of partitions of $2r$ with four types of parts $\set{\l^+, \l^-,\overline{\l}, \l' \geq 1}$, such that
\begin{itemize}
\item[(P1)]
the parts of the types $\l^+,\l^-$ are all odd, and the parts of the types $\overline{\l}, \l'$ are all even;
\item[(P2)]
the number of parts of type $\l^+$ is equal to that of type $\l^-$; and
\item[(P3)]
there are at most $p-2r$ total number of parts from the types $\l^+, \l^-, \overline{\l}$ combined.
\end{itemize}
\end{proposition}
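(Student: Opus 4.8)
The plan is to establish a bijection between the equivalence classes of the relation $\sim$ on $[p]_2^r \times [p]_2^r$ and the set of partitions described by (P1)--(P3). First I would unpack what an equivalence class looks like combinatorially. A pair $(S,T)$ of $r$-matchings in $K_p^2$ (each a set of $r$ disjoint edges) determines, up to the action of $\S_p$, the ``overlay'' multigraph $H$ on the vertex set $\sat(S) \cup \sat(T)$ whose edge set is the disjoint union of the edges of $S$ (colored, say, blue) and the edges of $T$ (colored red). Since $S$ and $T$ are each matchings, every vertex of $H$ has at most one blue and at most one red edge, so the connected components of $H$ are either isolated vertices, single edges, or alternating paths and alternating (even) cycles. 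Two pairs $(S,T), (S',T')$ are equivalent under $\sim$ exactly when their overlay multigraphs are isomorphic as red-blue edge-colored graphs; this is immediate from (I1)--(I2). So the task reduces to counting the isomorphism types of such colored multigraphs with exactly $r$ blue edges and $r$ red edges, on at most $p$ vertices.

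Next I would translate these components into parts of a partition of $2r$, where the size of a part records the number of edges (blue plus red) in the component. The components come in four shapes: (1) an alternating cycle of length $2m$ (which has $m$ blue and $m$ red edges, total $2m$ edges, so an even part $\overline{\l} = 2m$); (2) an alternating path with an even number of edges $2m$, which has $m$ blue and $m$ red edges, total $2m$ --- but here there are two sub-cases depending on the parity of... actually, let me reconsider: a path with $2m$ edges alternating blue/red has its two endpoints ``of opposite color,'' so up to isomorphism there is only one such type for each $m$, giving again an even part --- call this type $\l'$; (3) an alternating path with an odd number of edges $2m+1$: such a path has either $m+1$ blue and $m$ red edges, or $m$ blue and $m+1$ red edges, and these two are genuinely distinct colored-graph types (one has ``one more blue,'' the other ``one more red''), giving odd parts of two types $\l^+$ (blue-heavy) and $\l^-$ (red-heavy). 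Since $S$ and $T$ each have exactly $r$ edges, the blue-heavy and red-heavy odd components must balance out, forcing the number of $\l^+$ parts to equal the number of $\l^-$ parts --- this is (P2). The even parts $\overline{\l}$ (cycles) and $\l'$ (even paths) are automatically balanced. Finally, isolated vertices contribute no edges; the edge-bearing components use up $|\sat(S) \cup \sat(T)|$ vertices, and one checks that a component with $e$ edges spans $e+1$ vertices if it is a path and $e$ vertices if it is a cycle. Counting vertices: paths of all three types ($\l^+,\l^-,\l'$) each span one more vertex than their edge count, cycles ($\overline{\l}$) span exactly their edge count. Total vertices used is $2r$ plus the number of path-components, i.e. $2r + \#\{\l^+\} + \#\{\l^-\} + \#\{\l'\}$. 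For this to be at most $p$ we need $\#\{\l^+\} + \#\{\l^-\} + \#\{\l'\} \le p - 2r$. Hmm --- but (P3) instead bounds $\#\{\l^+\} + \#\{\l^-\} + \#\{\overline{\l}\}$.

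So the main obstacle, and the place I expect to spend the most care, is reconciling this vertex count with the exact form of (P3): the bound in (P3) is on the parts of types $\l^+,\l^-,\overline{\l}$, not $\l^+,\l^-,\l'$. I suspect the resolution is that one of the even types should be identified with ``even cycles'' in a way that makes the vertex bookkeeping come out as stated --- that is, the natural parametrization groups the even path ($\l'$) as ``costing'' the extra vertex while the cycle ($\overline\l$) does not, OR there is a deliberate reindexing (perhaps an even path of $2m$ edges is recorded together with an isolated structure so that it is the cycles that carry the ``+1'' vertex cost). I would resolve this by very carefully recomputing, for each component type, the pair (number of edges, number of vertices spanned), double-checking the alternating-path endpoint-color analysis, and then seeing which of the four types contributes a surplus vertex relative to $2r$; the statement of (P3) then tells me which three types those are, and I would adjust the labeling of $\overline\l$ versus $\l'$ accordingly. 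Once the component-to-part dictionary is pinned down with the vertex counts matching (P3), properties (P1) (odd parts from odd paths, even parts from even paths and cycles) and (P2) (blue/red balance on odd paths) are immediate, and the map ``pair $(S,T)$ $\mapsto$ its overlay's partition'' is a well-defined bijection onto the described set, giving $a_{p,2,r}$ equal to the stated count. I would close by remarking on the boundary case $p = 2r$, where (P3) forces no parts of types $\l^+,\l^-,\overline\l$, recovering the known ``even partitions of $2r$'' count (all parts of type $\l'$), as a sanity check.
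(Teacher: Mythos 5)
Your proposal is correct and follows essentially the same route as the paper's proof: decompose $S \cup T$ into alternating paths and even cycles, record each component's edge count as a part, use the blue/red imbalance of odd paths for (P2), and charge the one surplus vertex of each path component against $p-2r$ for (P3), with reversibility giving the bijection. The labeling ambiguity you flag resolves exactly as you suspect (and as the paper chooses): even paths are the $\overline{\l}$ parts and even cycles (including doubled edges) are the $\l'$ parts, which is also what your $p=2r$ sanity check forces, since perfect-matching unions consist only of cycles and (P3) then permits only primed parts.
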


\begin{proof}
We construct a bijection between the equivalence classes and the set of partitions described in our claim.
For each equivalence class $X_i$, take any element $(S,T) \in X_i$, and consider the components in the subgraph formed by the edges in $S \cup T$. Notice that since $S,T$ is each a matching, every vertex in this subgraph has degree at most $2$. Now, for each component that contains $\l$ edges, we assign it to a part as follows:
\begin{itemize}
\item
$\l^+$ if the component is a path of odd length, with $\frac{\l+1}{2}$ edges coming from $S$.
\item
$\l^-$ if the component is a path of odd length, with $\frac{\l-1}{2}$ edges coming from $S$.
\item
$\overline{\l}$ if the component is a path of even length.
\item
$\l'$ if the component is a cycle of length $2\l$. This includes the case of $2$-cycles, which occurs when the component consists of two overlapping edges, one from $S$ and one from $T$.
\end{itemize}

Figure~\ref{fig1} illustrates the correspondence between the partitions and equivalence classes for the case $r=2$.

If we do that for each component in $S \cup T$, we obtain parts that add up to $2r$ (since the value of each part is equal to the number of edges in the corresponding component), and the partition satisfies (P1) by construction. Next, (P2) holds since $|S| = |T|= r$ and the number of odd paths with one more edge from $S$ must be equal to the number of odd paths with one more edge from $T$. Also, notice that a component corresponding to parts $\l^+, \l^-, \overline{\l}$ saturates $\l+1$ vertices, while a component corresponding to $\l'$ has exactly $\l$ vertices. Thus, the total number of vertices saturated by all components is $2r$ plus the number of non-primed parts in the partition. Therefore, these components do not occupy more than $p$ vertices if and only if the number of the non-primed parts is no more than $p-2r$, satisfying (P3).

Note that the construction of the partition is reversible --- given a partition of $2r$ with the aforementioned four kinds of parts and the given conditions, we can uniquely recover the types of components in the graph with the edges $S \cup T$, and thus the equivalence class $X_i$. This finishes our proof.
\end{proof}

\begin{figure}[htb]
\begin{center}
%\scriptsize
\footnotesize
\begin{tabular}{cccccc}
\quad
% 2'2'
\begin{tikzpicture}[scale =1 ,>=stealth',shorten >=1pt,auto,node distance=4cm,
  thick,main node/.style={circle, scale = 0.3, draw}]

  \node[main node] at (0,0) (01) {};
  \node[main node] at (1,0) (11) {};
  \node[main node] at (0,1) (02) {};
  \node[main node] at (1,1) (12) {};
  \node[main node] at (0,2) (03) {};
  \node[main node] at (1,2) (13) {};
  \node[main node] at (0,3) (04) {};
  \node[main node] at (1,3) (14) {};

 \path
(01) edge[very thin] (11)
(02) edge[very thin] (12)
(02) edge[dotted, ultra thick] (12)
(01) edge[dotted, ultra thick] (11);
\end{tikzpicture}
\quad
&
\quad
% 4'
\begin{tikzpicture}[scale =1 ,>=stealth',shorten >=1pt,auto,node distance=4cm,
  thick,main node/.style={circle, scale = 0.3, draw}]

  \node[main node] at (0,0) (01) {};
  \node[main node] at (1,0) (11) {};
  \node[main node] at (0,1) (02) {};
  \node[main node] at (1,1) (12) {};
  \node[main node] at (0,2) (03) {};
  \node[main node] at (1,2) (13) {};
  \node[main node] at (0,3) (04) {};
  \node[main node] at (1,3) (14) {};

 \path
(01) edge[very thin] (11)
(02) edge[very thin] (12)
(02) edge[dotted, ultra thick] (01)
(12) edge[dotted, ultra thick] (11);
\end{tikzpicture}
\quad
&
\quad
% 2_2'
\begin{tikzpicture}[scale =1 ,>=stealth',shorten >=1pt,auto,node distance=4cm,
  thick,main node/.style={circle, scale = 0.3, draw}]

  \node[main node] at (0,0) (01) {};
  \node[main node] at (1,0) (11) {};
  \node[main node] at (0,1) (02) {};
  \node[main node] at (1,1) (12) {};
  \node[main node] at (0,2) (03) {};
  \node[main node] at (1,2) (13) {};
  \node[main node] at (0,3) (04) {};
  \node[main node] at (1,3) (14) {};

 \path
(01) edge[very thin] (11)
(02) edge[very thin] (12)
(01) edge[dotted, ultra thick] (11)
(02) edge[dotted, ultra thick] (03);
\end{tikzpicture}
\quad
&
% 4_
\begin{tikzpicture}[scale =1 ,>=stealth',shorten >=1pt,auto,node distance=4cm,
  thick,main node/.style={circle, scale = 0.3, draw}]

  \node[main node] at (0,0) (01) {};
  \node[main node] at (1,0) (11) {};
  \node[main node] at (0,1) (02) {};
  \node[main node] at (1,1) (12) {};
  \node[main node] at (0,2) (03) {};
  \node[main node] at (1,2) (13) {};
  \node[main node] at (0,3) (04) {};
  \node[main node] at (1,3) (14) {};

 \path
(01) edge[very thin] (11)
(02) edge[very thin] (12)
(02) edge[dotted, ultra thick] (03)
(12) edge[dotted, ultra thick] (11);
\end{tikzpicture}
\quad
&
\quad
% 1+1-2'
\begin{tikzpicture}[scale =1 ,>=stealth',shorten >=1pt,auto,node distance=4cm,
  thick,main node/.style={circle, scale = 0.3, draw}]

  \node[main node] at (0,0) (01) {};
  \node[main node] at (1,0) (11) {};
  \node[main node] at (0,1) (02) {};
  \node[main node] at (1,1) (12) {};
  \node[main node] at (0,2) (03) {};
  \node[main node] at (1,2) (13) {};
  \node[main node] at (0,3) (04) {};
  \node[main node] at (1,3) (14) {};

 \path
(01) edge[very thin] (11)
(02) edge[very thin] (12)
(03) edge[dotted, ultra thick] (13)
(01) edge[dotted, ultra thick] (11);
\end{tikzpicture}
\quad
&
\quad
% 2_2_
\begin{tikzpicture}[scale =1 ,>=stealth',shorten >=1pt,auto,node distance=4cm,
  thick,main node/.style={circle, scale = 0.3, draw}]

  \node[main node] at (0,0) (01) {};
  \node[main node] at (1,0) (11) {};
  \node[main node] at (0,1) (02) {};
  \node[main node] at (1,1) (12) {};
  \node[main node] at (0,2) (03) {};
  \node[main node] at (1,2) (13) {};
  \node[main node] at (0,3) (04) {};
  \node[main node] at (1,3) (14) {};

 \path
(01) edge[very thin] (11)
(02) edge[very thin] (12)
(01) edge[dotted, ultra thick, bend left] (03)
(12) edge[dotted, ultra thick] (13);
\end{tikzpicture}
\quad
\\
\vspace{0.5cm}
%\quad $X_0 : (2', 2')$ \quad & \quad $X_1 : (4')$ \quad & \quad $X_2 : (\overline{2},2')$ \quad & \quad $X_3 :  (\overline{4})$ \quad & \quad $ X_4 :  (1^+, 1^-, 2')$ \quad & \quad $X_5 :  (\overline{2}, \overline{2})$  \quad \\
 $X_0 : (2', 2')$   &   $X_1 : (4')$   &   $X_2 : (\overline{2},2')$   &   $X_3 :  (\overline{4})$   &   $ X_4 :  (1^+, 1^-, 2')$   &   $X_5 :  (\overline{2}, \overline{2})$    \\

%3+1-
\begin{tikzpicture}[scale =1 ,>=stealth',shorten >=1pt,auto,node distance=4cm,
  thick,main node/.style={circle, scale = 0.3, draw}]

  \node[main node] at (0,0) (01) {};
  \node[main node] at (1,0) (11) {};
  \node[main node] at (0,1) (02) {};
  \node[main node] at (1,1) (12) {};
  \node[main node] at (0,2) (03) {};
  \node[main node] at (1,2) (13) {};
  \node[main node] at (0,3) (04) {};
  \node[main node] at (1,3) (14) {};

 \path
(01) edge[very thin] (11)
(02) edge[very thin] (12)
(03) edge[dotted, ultra thick] (13)
(01) edge[dotted, ultra thick] (02);
\end{tikzpicture}
\quad
&
\quad
%1+3-
\begin{tikzpicture}[scale =1 ,>=stealth',shorten >=1pt,auto,node distance=4cm,
  thick,main node/.style={circle, scale = 0.3, draw}]

  \node[main node] at (0,0) (01) {};
  \node[main node] at (1,0) (11) {};
  \node[main node] at (0,1) (02) {};
  \node[main node] at (1,1) (12) {};
  \node[main node] at (0,2) (03) {};
  \node[main node] at (1,2) (13) {};
  \node[main node] at (0,3) (04) {};
  \node[main node] at (1,3) (14) {};

 \path
(01) edge[very thin] (11)
(02) edge[very thin] (12)
(02) edge[dotted, ultra thick] (03)
(12) edge[dotted, ultra thick] (13);
\end{tikzpicture}
\quad
&
\quad
% 1+1-2_ 
\begin{tikzpicture}[scale =1 ,>=stealth',shorten >=1pt,auto,node distance=4cm,
  thick,main node/.style={circle, scale = 0.3, draw}]

  \node[main node] at (0,0) (01) {};
  \node[main node] at (1,0) (11) {};
  \node[main node] at (0,1) (02) {};
  \node[main node] at (1,1) (12) {};
  \node[main node] at (0,2) (03) {};
  \node[main node] at (1,2) (13) {};
  \node[main node] at (0,3) (04) {};
  \node[main node] at (1,3) (14) {};

 \path
(01) edge[very thin] (11)
(02) edge[very thin] (12)
(13) edge[dotted, ultra thick] (04)
(02) edge[dotted, ultra thick] (03);
\end{tikzpicture}
\quad

&
\quad
%1+1+1-1-
\begin{tikzpicture}[scale =1 ,>=stealth',shorten >=1pt,auto,node distance=4cm,
  thick,main node/.style={circle, scale = 0.3, draw}]

  \node[main node] at (0,0) (01) {};
  \node[main node] at (1,0) (11) {};
  \node[main node] at (0,1) (02) {};
  \node[main node] at (1,1) (12) {};
  \node[main node] at (0,2) (03) {};
  \node[main node] at (1,2) (13) {};
  \node[main node] at (0,3) (04) {};
  \node[main node] at (1,3) (14) {};

 \path
(01) edge[very thin] (11)
(02) edge[very thin] (12)
(03) edge[dotted, ultra thick] (13)
(04) edge[dotted, ultra thick] (14);
\end{tikzpicture}
\quad
&
&

\begin{tikzpicture}[scale =1 ,>=stealth',shorten >=1pt,auto,node distance=4cm,
  thick,main node/.style={circle, scale = 0.3, draw}]
\def\x{0.2}
  \node at (0 - \x,1) (01) {};
  \node at (1+\x,1) (11) {};
  \node at (0- \x,2.5) (02) {};
  \node at (1+ \x,2.5) (12) {};
%  \node[main node] at (0,2) (03) {};
%  \node[main node] at (1,2) (13) {};
%  \node[main node] at (0,3) (04) {};
%  \node[main node] at (1,3) (14) {};

\node at (0.5,0.5) (a) {{\footnotesize edges in $S$}};
\node at (0.5,2) (a) {{\footnotesize edges in $T$}};

 \path
(01) edge[very thin] (11)
(02) edge[dotted, ultra thick] (12);
\end{tikzpicture}

\\
\vspace{0.5cm} 
%\quad $X_6 : (3^+,1^-)$ \quad & \quad  $X_7 : (1^+, 3^-)$ \quad & \quad  $X_8 : (1^+,1^-,\overline{2})$ \quad & \quad  $X_9 : (1^+,1^+,1^-,1^-)$  \quad &&
  $X_6 : (3^+,1^-)$   &    $X_7 : (1^+, 3^-)$   &    $X_8 : (1^+,1^-,\overline{2})$   &    $X_9 : (1^+,1^+,1^-,1^-)$    &&
\end{tabular}
\caption{The bijection between integer partitions and non-isomorphic unions of two matchings in $[p]_{2}^2$.}\label{fig1}
\end{center}
\end{figure}

Observe that, when $p=2r$, (P3) assures that the corresponding partitions all have only primed parts, and so $a_{2r,2,r}$ is indeed the number of even partitions of $2r$. As $p$ increases from $2r$ to $4r$, so does $a_{p,2,r}$. However, notice that $a_{p,2,r}$ is constant for all $p \geq 4r$. In fact, it follows from Proposition~\ref{propEquivPartitions} that
\begin{equation}\label{anlcount}
a_{p,2,r} = [x^{2r}y^{r}] \prod_{i \geq 1} \left( \frac{1}{(1-x^{2i-1}y^i)(1-x^{2i-1}y^{i-1})(1-x^{2i}y^i)^2} \right)
\end{equation}
for all $p \geq 4r$. Here, the degree of $x$ counts the total number of edges in a component, the degree of $y$ counts the number of edges in the component that belong to $S$, and the generating functions $\frac{1}{1-x^{2i-1}y^i}, \frac{1}{1-x^{2i-1}y^{i-1}}, \frac{1}{1-x^{2i}y^i}$, and $\frac{1}{1-x^{2i}y^i}$ correspond to parts of types $\l^+, \l^-, \overline{\l}$, and $\l'$ respectively. Using~\eqref{anlcount}, we determine that the first few terms of the sequence $\set{a_{4r,2,r}}_{r \geq 0}$ are
\[
\begin{array}{r|c|c|c|c|c|c|c|c|c|c|c|c|c|c}
r & 0 & 1 & 2 & 3 & 4 & 5 & 6 & 7 & 8 & 9 & 10 & 11 & 12 & 13\\
\hline
a_{4r,2,r} & 1 & 3 & 10 & 27 & 69 & 161 & 361 & 767 & 1578 & 3134 & 6064 & 11432 & 21105 & 38175  
%next terms 67863, 118658, 204455, 347439, 583063, \ldots
\end{array}
\]
This sequence was previously unreported to the Online Encyclopedia of Integer Sequences (OEIS)~\cite{OEIS}, now sequence A316587 therein.

We next describe an alternative approach to characterizing the associates of the hypermatching h.c.c.\ using equivalence classes of meet tables. While this approach boosts the advantage of applying for all values of $q$ (unlike the integer partitions approach above that is restricted to the case $q=2$), enumerating the equivalence classes of meet tables is seemingly difficult.

Given matchings $S, S' \in [p]_q^r$ where $S = \set{S_1, \ldots, S_r}$ and $S' = \set{S'_1, \ldots, S'_r}$, define the \emph{meet table} of $S$ and $S'$ to be the $r$-by-$r$ matrix $M_{S,S'}$ where
\[
M_{S,S'}[i,j] = | S_i \cap S'_j|
\]
for all $i, j \in [r]$. Notice that the matrix $M_{S,S'}$ must satisfy the following properties:
\begin{itemize}
\item[(T1)]
Every entry of the matrix is an integer between $0$ and $q$.
\item[(T2)]
The entries in every row and every column sum to no more than $q$.
\item[(T3)]
The $r^2$ entries of the matrix sum to at least $2qr-p$.
\end{itemize}

Conversely, given an $r$-by-$r$ matrix $T$ that satisfies properties (T1)-(T3), one can find $S, S' \in [p]_q^r$ such that $T = M_{S,S'}$. Next, we say that two meet tables $T,T'$ are related if there exist $r$-by-$r$ permutation matrices $P, P'$ such that $T' = PTP'^{\top}$. In other words, $T,T'$ are related if one matrix can be obtained from the other by permuting rows and columns. Then it is not hard to see that given matchings $S_1, S_1', S_2, S_2' \in [p]_q^r$, $(S_1, S_1')$ and $(S_2, S_2')$ belong to the same equivalence class (as defined in Definition~\ref{defEquivRel}) if and only if $M_{S_1, S_1'} \sim M_{S_2, S_2'}$. For instance, the 10 isomorphism classes of $\M_{p, 2,2}$ illustrated in Figure~\ref{fig1} correspond to the following meet tables:
\[
\begin{array}{lllll}
X_0 : \begin{bmatrix}
2 & 0 \\ 0 & 2
\end{bmatrix},
&
X_1 : \begin{bmatrix}
1 & 1 \\ 1 & 1
\end{bmatrix},
&
X_2 : \begin{bmatrix}
2 & 0 \\ 0 & 1
\end{bmatrix},
&
X_3 : \begin{bmatrix}
1 & 1 \\ 0 & 1
\end{bmatrix},
&
X_4 : \begin{bmatrix}
2 & 0 \\ 0 & 0
\end{bmatrix},
\\
X_5 : \begin{bmatrix}
1 & 0 \\ 0 & 1
\end{bmatrix},
&
X_6 : \begin{bmatrix}
1 & 1 \\ 0 & 0
\end{bmatrix},
&
X_7 : \begin{bmatrix}
0 & 1 \\ 0 & 1
\end{bmatrix},
&
X_8 : \begin{bmatrix}
0 & 1 \\ 0 & 0
\end{bmatrix},
&
X_9 : \begin{bmatrix}
0 & 0 \\ 0 & 0
\end{bmatrix}.
\end{array}
\]
Thus, the problem of counting the number of associates in $\M_{p,q,r}$ can be solved by enumerating the equivalence classes of meet tables that satisfy (T1)-(T3).

We do so for the special case of $r=2$. Recall that $a_{p,q,r}$ denotes the number of equivalence classes in $\M_{p,q,r}$. Then we have the following:

\begin{proposition}\label{propap2rcount}
Given positive integers $p,q$ where $p \geq 4q$,
\[
a_{p,q,2} = 
\begin{cases}
\frac{1}{24}(q^4+6q^3+20q^2+36q+24) & \tn{if $q$ is even;}\\
\frac{1}{24}(q^4+6q^3+20q^2+30q+15) & \tn{if $q$ is odd.}\\
\end{cases}
\]
\end{proposition}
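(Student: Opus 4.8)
The plan is to set up a Burnside (orbit-counting) computation on the set of admissible $2$-by-$2$ meet tables. Recall from the discussion preceding the statement that the associates of $\M_{p,q,2}$ are in one-to-one correspondence with the equivalence classes of $2$-by-$2$ meet tables under the relation $T \sim P T P'^{\top}$ with $P,P'$ ranging over $2$-by-$2$ permutation matrices, and that a matrix is realizable as a meet table precisely when it satisfies (T1)--(T3). Since $p \geq 4q$, condition (T3) asks only that the entries sum to at least $2qr-p = 4q-p \leq 0$, so it is automatically satisfied. Writing a general candidate as $T = \begin{bmatrix} a & b \\ c & d\end{bmatrix}$, we are thus reduced to counting the orbits of
\[
N := \set{(a,b,c,d)\in\mZ_{\geq 0}^{4} : a+b\leq q,\ a+c\leq q,\ b+d\leq q,\ c+d\leq q}
\]
under the group $\S_2\times\S_2$ that independently swaps the two rows and the two columns. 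Its four elements act on $(a,b,c,d)$ by the identity, by $(a,b,c,d)\mapsto(c,d,a,b)$ (row swap), by $(a,b,c,d)\mapsto(b,a,d,c)$ (column swap), and by $(a,b,c,d)\mapsto(d,c,b,a)$ (both swaps). Burnside's lemma then gives
\[
a_{p,q,2} \;=\; \tfrac14\Bigl(|N| + |\mathrm{Fix}(\text{row})| + |\mathrm{Fix}(\text{col})| + |\mathrm{Fix}(\text{both})|\Bigr).
\]

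I would next evaluate the three nontrivial fixed-point counts, which is quick. A table fixed by the row swap has $a=c$ and $b=d$, so all four inequalities reduce to $2a\leq q$ and $2b\leq q$; hence $|\mathrm{Fix}(\text{row})| = (\lfloor q/2\rfloor+1)^2$, and by the obvious symmetry $|\mathrm{Fix}(\text{col})|$ is the same. A table fixed by the double swap has $a=d$ and $b=c$, and then every inequality collapses to $a+b\leq q$, so $|\mathrm{Fix}(\text{both})| = \binom{q+2}{2}$.

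The substantive computation is $|N|$ itself, and this is the step I expect to require the most care. Fixing $a$ and then letting $b,c$ range over $\set{0,\dots,q-a}$, the variable $d$ is free in $\set{0,\dots,q-\max(b,c)}$, contributing $q+1-\max(b,c)$ choices; so, with $m:=q-a$,
\[
|N| \;=\; \sum_{m=0}^{q}\sum_{b=0}^{m}\sum_{c=0}^{m}\bigl(q+1-\max(b,c)\bigr)
\;=\; \sum_{m=0}^{q}\Bigl((q+1)(m+1)^2 - \tfrac{m(m+1)(4m+5)}{6}\Bigr),
\]
where I used the identity $\sum_{b,c=0}^{m}\max(b,c) = \sum_{k=0}^{m}k(2k+1) = \tfrac{m(m+1)(4m+5)}{6}$. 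Summing the two polynomials in $q$ (via $\sum_{m=0}^{q}(m+1)^2 = \tfrac{(q+1)(q+2)(2q+3)}{6}$ and $\sum_{m=0}^{q}m(m+1)(4m+5) = q(q+1)(q+2)^2$) and factoring yields the clean closed form
\[
|N| \;=\; \frac{(q+1)(q+2)(q^2+3q+3)}{6}.
\]

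Finally I would substitute into the Burnside formula and split on the parity of $q$, since $\lfloor q/2\rfloor+1$ equals $\tfrac{q+2}{2}$ for $q$ even and $\tfrac{q+1}{2}$ for $q$ odd. Routine algebra then gives
\[
a_{p,q,2} \;=\; \frac{(q+2)(q^3+4q^2+12q+12)}{24} \;=\; \frac{q^4+6q^3+20q^2+36q+24}{24}
\]
when $q$ is even and
\[
a_{p,q,2} \;=\; \frac{(q+1)(q^3+5q^2+15q+15)}{24} \;=\; \frac{q^4+6q^3+20q^2+30q+15}{24}
\]
when $q$ is odd, which is the claim. As consistency checks: for $q=1$ the formula returns $3$, matching $\M_{p,1,2} = \J_{p,2}$; for $q=2$ it returns $10$, matching the ten classes of Figure~\ref{fig1}. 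Aside from keeping track of the $\max$ inside the triple sum for $|N|$, every step here is elementary bookkeeping.
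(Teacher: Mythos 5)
Your proof is correct, and I checked the computations: the reduction to counting orbits of admissible $2$-by-$2$ meet tables is exactly the framework set up before the proposition, (T3) is indeed vacuous for $p \geq 4q$, the fixed-point counts are right (including the slightly subtle point that $2a \leq q$ and $2b \leq q$ already force the row constraint $a+b \leq q$, so no constraint is lost), the lattice-point count $|N| = \tfrac{1}{6}(q+1)(q+2)(q^2+3q+3)$ is correct, and the final Burnside average reproduces both parity cases of the stated formula (it also agrees with the checks $a_{p,1,2}=3$ and $a_{p,2,2}=10$). However, your route is genuinely different from the paper's. The paper never averages over the group: it enumerates orbit representatives directly by a case analysis on where the zero entries of the meet table sit (diagonal form, a zero row or column, exactly one zero), counts each family of representatives, and handles the all-positive tables by subtracting $1$ from every entry, which sets up the recurrence $a_{p,q,2} = \tfrac{1}{6}(2q^3+3q^2+10q+6) + a_{p-4,q-2,2}$ for even $q$ (and the analogous odd version), solved from the base cases $a_{p,0,2}=1$ and $a_{p,1,2}=3$. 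Your Burnside computation buys a uniform, single-pass derivation with no recurrence and no risk of over- or under-counting between cases, at the price of the three-fold sum with the $\max$ term; the paper's argument buys a structural insight --- the bijection between all-positive classes for $\M_{p,q,2}$ and all classes for $\M_{p-4,q-2,2}$ --- that is invisible in the averaged count, though it requires more careful bookkeeping of canonical forms. Both rest on the same correspondence between associates and equivalence classes of meet tables, so neither needs anything beyond what the paper establishes before the statement.
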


\begin{proof}
We count the number of equivalence classes of $2$-by-$2$ meet tables $M$ that satisfy properties (T1)-(T3) by cases. First, if $M$ has at least one zero entry, then it is related to one of the following:
\begin{itemize}
\item
$\begin{bmatrix} a & 0 \\ 0 &b \end{bmatrix}$ where $q \geq a \geq b \geq 0$. This gives $\binom{q+2}{2}$ possibilities.
\item
$\begin{bmatrix} a & b \\ 0 &0 \end{bmatrix}$ or $\begin{bmatrix} a & 0 \\ b &0 \end{bmatrix}$ where $a \geq b \geq 1$ and $a+b \leq q$. This gives $2 \left( \frac{q^2}{4} \right)$ possibilities when $q$ is even, and $2 \left( \frac{q^2-1}{4}\right)$ possibilities when $q$ is odd.
\item
$\begin{bmatrix} a & b \\ c &0 \end{bmatrix}$ where $a,b,c \geq 1, a+b \leq q$, and $a+c\leq q$. For each fixed $a \in [q-1]$ there are $q-a$ choices for each of $b$ and $c$. Thus, this gives
\[
\sum_{a=1}^{q-1} (q-a)^2 = \frac{q(q-1)(2q-1)}{6}
\]
possibilities.
\end{itemize}
Now suppose $M = \begin{bmatrix} a & b \\ c &d \end{bmatrix}$ where $a,b,c,d \geq 1$. Note that $M' := \begin{bmatrix} a-1 & b-1 \\ c-1 &d-1 \end{bmatrix}$ would be a meet table for some equivalence class in $\M_{p-4,q-2,2}$. Moreover, the correspondence is bijective. So this gives $a_{p-4,q-2,2}$ possibilities.

Thus, when $q$ is even, the total number of equivalence classes is
\begin{eqnarray*}
a_{p,q,2} &=& \binom{q+2}{2} + 2 \left( \frac{q^2}{4} \right) + \frac{q(q-1)(2q-1)}{6} + a_{p-4,q-2,2}\\
&=& \frac{1}{6} (2q^3+3q^2+10q+6) + a_{p-4,q-2,2}.
\end{eqnarray*}
Likewise, when $q$ is odd, we obtain $a_{p,q,2} = \frac{1}{6} (2q^3+3q^2+10q+3) + a_{p-4,q-2,2}$. Using $a_{p,0,2} = 1$ for all $p \geq 0$ and $a_{p,1,2} = 3$ for all $p \geq 4$, we obtain the formulas as claimed by solving a simple recurrence for each parity of $q$.
\end{proof}

Using Proposition~\ref{propap2rcount}, we obtain that the first few terms of the sequence $\set{a_{4q,q,2}}_{q \geq 0}$ are
\[
\begin{array}{r|c|c|c|c|c|c|c|c|c|c|c|c|c|c|c}
q & 0 & 1 & 2 & 3 & 4 & 5 & 6 & 7 & 8 & 9 & 10 & 11 & 12 & 13\\
\hline
a_{4q,q,2} &
 1 & 3 & 10 & 22 & 47 &
  85 & 148 & 236 & 365 & 535 & 766 & 1058 &  1435 & 1897\\
% 2472, 3160, 3993, 
%next terms: 4971, 6130, 7470, 9031, 10813, 12860, 15172, 17797, 20735, 24038, 27706, 31795, 36305, 41296, 46768, 52785, 59347, 66522, 74310, 82783, 91941, 101860, 112540, 124061, 136423, 149710, 163922, 179147, 195385, 212728, 231176, 250825, 271675, 293826, \ldots
\end{array}
  \]
This sequence was also not previously reported to the OEIS, now sequence A336529 therein.

While the proof of Proposition~\ref{propap2rcount} is elementary, it is also rather ad hoc and does not seem easily extendable to obtain a formula for $a_{p,q,r}$ for general $r$, which may require a more sophisticated approach.

\subsection{Symmetric subschemes of $\M_{p,q,r}$}\label{sec42}

As seen in the analyses of lift-and-project relaxations in Section~\ref{secLandP}, it is much easier to work with a commutative scheme where the eigenspaces of the associates are aligned. Moreover, many lift-and-project operators (including $\LS_+$) require its certificate matrices to be symmetric. This naturally raises the question of when $\M_{p,q,r}$ is indeed a symmetric h.c.c.\ (which would imply that it is also a commutative scheme), and also, which contractions of associates in $\M_{p,q,r}$ would lead to symmetric subschemes.

We have already seen that when $r=1$, $\M_{p,q,r}$ reduces to the Johnson scheme $\J_{p,q}$, which is obviously symmetric and commutative. For $q=2$ and arbitrary $r$, it is known that $\M_{p,2,r}$ is a commutative scheme if and only if $p \in \set{2r, 2r+1}$~\cite{GodsilM10}. We provide an elementary proof of this below:

\begin{proposition}\label{propNoncomm}
Suppose $r \geq 2$ is a fixed integer. Then $\M_{p,2,r}$ is a commutative scheme if and only if $p \in \set{2r, 2r+1}$.
\end{proposition}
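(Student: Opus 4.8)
The plan is to prove both directions by explicit construction. For the easy direction, I would note that $\M_{p,2,1} = \J_{p,2}$ is irrelevant here since $r\geq 2$; instead, for $p = 2r$ the commutativity is exactly the Godsil--Meagher result (specialization of $\M_{qr,q,r}$ with $q=2$), while for $p = 2r+1$ it is again their stated result (the case $q=2$, $p = 2r+1$ mentioned just before Proposition~\ref{HtildeHoverline}). So in principle both ``if'' cases are already in the literature, but since the proposition claims to give an \emph{elementary} proof, I would instead re-derive commutativity for these two small cases directly: using Proposition~\ref{propEquivPartitions}, when $p = 2r$ the only admissible partitions are those with all primed (even) parts, and when $p = 2r+1$ we additionally allow exactly one non-primed part. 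In both cases one can check that the associates are symmetric matrices --- the key point being that for $p \leq 2r+1$, every equivalence class $X_i$ is closed under swapping the roles of $S$ and $T$. Indeed, swapping $S \leftrightarrow T$ turns a $\l^+$ component into a $\l^-$ component and vice versa, fixes $\overline{\l}$ and $\l'$ components; when there is at most one non-primed part (the $p=2r$ and $p=2r+1$ cases via (P3)), there are no $\l^{\pm}$ parts at all (since (P2) forces them to come in pairs, needing at least two non-primed parts), so every class is symmetric. Hence $\M_{p,2,r}$ is a symmetric scheme, thus commutative.

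For the ``only if'' direction, I would show that whenever $p \geq 2r+2$ the scheme $\M_{p,2,r}$ contains a non-symmetric associate, hence (by property (A4), which forces symmetric schemes to be commutative but does not force non-symmetric ones to be non-commutative) --- wait, more carefully: a non-symmetric associate alone does not immediately give non-commutativity. So the real task is to exhibit two associates $M_i, M_j$ with $M_i M_j \neq M_j M_i$. The natural candidate: when $p \geq 2r+2$, there is room for a configuration $(S,T)$ whose union is a single path of odd length $3$ plus $(r-2)$ disjoint $2$-cycles, i.e. the partition $(3^+, 1^-, 2', 2', \ldots, 2')$ with $r-2$ primed deuces --- this uses $4 + 2(r-2) = 2r$ vertices for the matching edges but the path saturates $4$ vertices while contributing $2$ edges from $S$ and $1$ from $T$, which needs (P3) to allow $\geq 2$ non-primed parts, i.e. $p - 2r \geq 2$. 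Let $A$ be the associate of the class $(3^+,1^-,2',\ldots)$ and $A^{\top}$ the associate of $(1^+, 3^-, 2', \ldots)$; these are distinct (by (P2) the counts differ) and non-symmetric. The plan is then to find a third associate $B$ (e.g.\ a ``single overlap'' class, analogous to $X_8: (1^+,1^-,\overline{2},2',\ldots)$ or the $B_{qr+1}$-type associate from $\tilde\M_{p,2,r}$ whose union saturates exactly one extra vertex) and compute that $AB$ and $BA$ have different $(S,T)$-entries for a suitably chosen pair of matchings; equivalently, count directed walks of length $2$ of the two prescribed ``colours'' between two fixed matchings and show the two counts differ. A cleaner route: in any commutative scheme all associates are normal and the Bose--Mesner algebra is commutative, so in particular $A A^{\top} = A^{\top} A$ would be forced; it suffices to find matchings $S, T$ with $(A A^{\top})[S,T] \neq (A^{\top}A)[S,T]$, i.e.\ to count, for fixed $S$ and $T$, the number of matchings $U$ such that $(S,U)$ is of type $(3^+,1^-,2',\dots)$ and $(U,T)$ is of type $(1^+,3^-,2',\dots)$, versus the same with the two types interchanged, and exhibit $S, T$ (with $p \geq 2r+2$) making these unequal --- e.g.\ take $T = S$ and do a small explicit count, or take $S, T$ sharing $r-1$ edges.

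The main obstacle I anticipate is the combinatorial bookkeeping in the ``only if'' direction: writing down a clean pair $S, T$ and a pair of associates for which the products provably differ, and verifying the two walk-counts are unequal without an unwieldy case analysis. The cleanest formulation is probably to pick $S, T$ that differ in as few edges as possible (so $S \cup T$ has a controlled component structure) and to choose the intermediate type so that $U$ is essentially forced, reducing each count to counting a single local reconfiguration; the asymmetry between ``the extra $S$-edge of the odd path lies near $S$'' versus ``near $T$'' should then produce different counts precisely when there is a spare vertex, i.e.\ when $p \geq 2r+2$. I would also need the observation (immediate from Proposition~\ref{propEquivPartitions} or by a direct vertex count) that $p \geq 2r+2$ is exactly the threshold at which a partition with two non-primed parts, one of type $\l^+$ and one of type $\l^-$, becomes admissible --- this is what makes $2r+1$ the last commutative value.
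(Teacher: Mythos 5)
Your ``if'' direction is correct and is essentially the paper's own argument: for $p \le 2r+1$, condition (P3) leaves room for at most one non-primed part, and (P2) rules out $\ell^{+}$/$\ell^{-}$ parts (they must come in pairs), so every equivalence class is fixed by swapping $S$ and $T$; hence every associate is symmetric and $\M_{p,2,r}$ is a symmetric, thus commutative, scheme.

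The ``only if'' direction, however, has a genuine gap. You correctly identify the threshold ($p\ge 2r+2$ is exactly when a class with both an $\ell^{+}$ and an $\ell^{-}$ part becomes admissible) and the right general strategy (compare a single entry of two products of associates by counting intermediate matchings), but you never exhibit a concrete pair of associates and a concrete pair of matchings for which the two counts provably differ --- and you flag precisely this computation as the unresolved obstacle, so the claim is not actually established. Moreover, your most concrete shortcut cannot work: taking $T=S$ and comparing $(AA^{\top})[S,S]$ with $(A^{\top}A)[S,S]$ is hopeless, because in a homogeneous scheme the row sums and column sums of any associate are constant and equal (both count the pairs in the class), so the diagonals of $AA^{\top}$ and $A^{\top}A$ always agree; and it is not evident without computation that your chosen $A$ (the class $(3^{+},1^{-},2'^{(r-2)})$) fails to commute with $A^{\top}$ at all. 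The paper sidesteps both issues by taking two \emph{symmetric} associates, $M$ for the class $(\overline{2},2'^{(r-1)})$ and $M'$ for $(1^{+},1^{-},2'^{(r-1)})$, together with the explicit matchings $S=\set{\set{2i-1,2i} : i\in[r]}$ and $T=\set{\set{1,3}}\cup\set{\set{2i-1,2i} : i\in\set{3,\ldots,r+1}}$ (note that $S\cup T$ is exactly of the type $(3^{+},1^{-},2'^{(r-2)})$ you single out), and then checks directly that $(MM')[S,T]=0$ while $(M'M)[S,T]=2$. Some such explicit, verified count is what your proposal is missing; until it is supplied, the ``only if'' half remains a plan rather than a proof.
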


\begin{proof}
First, suppose $p \in \set{2r, 2r+1}$. In this case, given $(S,T)$ in any equivalence class $X_i$, the components in $S \cup T$ consist of at most one path (which must be even), with the rest all being even cycles. Then we see that $(S,T)$ and $(T,S)$ belong to the same equivalence class, which implies that $\M_{p,2,r}$ is a symmetric (and hence commutative) scheme in these cases. 

Now suppose $p \geq 2r+2$. let $M, M' \in \M_{p,2,r}$ be the associates corresponding respectively to the equivalence classes $X : (\overline{2}, 2'^{(r-1)})$ and $X' : (1^+,1^-, 2'^{(r-1)})$ (where the superscripts denote multiplicities). Now consider the matchings
\begin{eqnarray*}
S &:=& \set{ \set{2i-1, 2i} : i \in [r]}, \\
T &:=& \set{1,3} \cup \set{ \set{2i-1,2i} : i \in \set{3,4,\ldots, r+1} }.
\end{eqnarray*}
Then $(MM')[S,T] = 0$, and $(M'M)[S,T] = 2$. Since $MM' \neq M'M$, $\M_{p,2,r}$ is not commutative.
\end{proof}

While $\M_{p,q,r}$ is not symmetric in general, we have seen in Section~\ref{secLandP} that we can obtain symmetric subschemes of it (such as $\tilde{\M}_{p,q,r}$ and $\overline{\M}_{p,q,r}$) by contracting associates. Herein, we investigate the possibility of obtaining other symmetric subschemes of $\M_{p,q,r}$.

Given an h.c.c.\ that is not symmetric, a reasonable first attempt might be to take every matrix $B$ in the h.c.c.\ that is not symmetric, and contract  $\set{B, B^{\top}}$. While this preserves the properties (A1)-(A3), (A4) may no longer hold. For an example, the proof of Proposition~\ref{propNoncomm} is based on two symmetric associates whose product is not symmetric. Thus, any set of symmetric matrices containing these two associates would fail the spanning condition (A4). 

So, which are the contractions of $\M_{p,q,r}$ that do result in symmetric subschemes? In the special case of $q=r=2$, let $X_0, X_1, \ldots, X_9$ denote the equivalence classes corresponding to the partitions as in Figure~\ref{fig1}. Note that $X_0 : (2',2')$ corresponds to the identity matrix, and we have a scheme with 9 associates (when $p \geq 8$). For convenience, in this section, we will refer to the matrices in $\M_{p,2,2}$ as  $M_0, \ldots, M_9$ (instead of $M_{p,2,2,0},\ldots, M_{p,2,2,9}$), when the value of $p$ is clear from the context.

Recall the $B_i$ matrices defined before Proposition~\ref{HtildeHoverline} that correspond to contracting associates based on the number of vertices the union of the matchings saturate. Then we have
\begin{eqnarray*}
B_4 &=& M_0 + M_1,\\
B_5 &=& M_2+M_3,\\
B_6 &=& M_4+M_5+M_6+M_7,\\
B_7 &=& M_8 \\
B_8 &=& M_9.
\end{eqnarray*}
Also, we know from Proposition~\ref{HtildeHoverline} that
\[
\tilde{\M}_{p,2,2} = \overline{\M}_{p,2,2} = \set{I, M_1, B_5, B_6, B_7, B_8}
\]
is a 5-associate symmetric subscheme of $\M_{p,2,2}$ for all $p \geq 8$. Of course, there is also the 1-associate trivial subscheme. To investigate if there are any other contractions of $\M_{p,2,2}$ that also result in symmetric subschemes, we look into how the associates of $\M_{p,2,2}$ interact with each other. As shown in the proof of Proposition~\ref{propNoncomm}, not all pairs of these matrices in $\M_{p,2,2}$ commute when $p \geq 6$. In fact, some of these matrices commute for some values of $p$ but not others. 

Table~\ref{table1} shows the commutativity data for the h.c.c.\ $\set{M_i}_{i=0}^{9}$ for up to $p = 15$.  A checkmark ($\checkmark$) indicates that the matrices commute for all $p \leq 15$. A number indicates that those two matrices $M_i,M_j$ only commute for that specific value of $p$, among values of $p \leq 15$ for which both $M_i,M_j$ are non-zero. For example, $X_4 : (1^+,1^-,2')$ and $X_8 : (1^+,1^-, \overline{2})$ correspond to matching unions that saturate $6$ and $7$ vertices respectively, and thus $M_4, M_8$ are both non-zero only when $p \geq 7$. Now the entry ``$9$'' in the table means that $M_4, M_8$ commute when $p=9$, and do not commute for any $p \in \set{7,8,10,11,12,13,14,15}$. Finally, a blank entry indicates the matrices do not commute for any $p \leq 15$ for which they are both non-zero.

\begin{table}[htb]
\begin{center}
\scalebox{0.75}{
{\small
\begin{tabular}{|l|c|c|c|c|c|c|c|c|c|c|}
\hline
$M_iM_j$ commute? &$(2', 2') $&$ (4')$&$ (\overline{2},2')$&$ (\overline{4})$&$  (1^+, 1^-, 2')$&$ (\overline{2},\overline{2})$&$(3^+,1^-)$&$(1^+,3^-)$& $(1^+,1^-,\overline{2})$&$  (1^+,1^+,1^+,1^-)$\\
\hline
$(2', 2')$& $\checkmark$& $\checkmark$&$\checkmark$ &$\checkmark$ &$\checkmark$ &$\checkmark$ &$\checkmark$ &$\checkmark$ &$\checkmark$ &$\checkmark$\\
\hline
$ (4')$&$\checkmark$ & $\checkmark$& $\checkmark$& $\checkmark$& & && &$\checkmark$ &$\checkmark$ \\
\hline
$ ( \overline{2},2')$&$\checkmark$ & $\checkmark$& $\checkmark$& $\checkmark$& & && &$\checkmark$ &$\checkmark$ \\
\hline
$( \overline{4})$&$\checkmark$ & $\checkmark$& $\checkmark$& $\checkmark$& 6&6 &6 &6 &$\checkmark$ &$\checkmark$ \\
\hline
$(1^+, 1^-, 2')$& $\checkmark$& & & 6&$\checkmark$ &6 & & & 9& \\
\hline
$( \overline{2}, \overline{2})$& $\checkmark$& & &6 &6 &$\checkmark$ && &9 & \\
\hline
$(3^+,1^-)$&$\checkmark$ & & & 6& & & $\checkmark$&  & 9& \\
\hline
$(1^+,3^-)$&$\checkmark$ & & & 6& & &  & $\checkmark$ & 9& \\
\hline
$(1^+,1^-, \overline{2})$&$\checkmark$ & $\checkmark$& $\checkmark$& $\checkmark$& 9& 9& 9& 9&$\checkmark$ &$\checkmark$ \\
\hline
$(1^+,1^+,1^-,1^-)$&$\checkmark$ & $\checkmark$& $\checkmark$& $\checkmark$& & & & &$\checkmark$ &$\checkmark$ \\
\hline
\end{tabular}
}
}
\caption{Commutativity data for matrices in $\M_{p,2,2}$ for $p \leq 15$.}\label{table1}
\end{center}
\end{table}

We have also exhaustively tested all possible contractions of $\M_{p,2,2}$ for $p \leq 15$ to see which contractions result in symmetric, commutative subschemes, and found the following:

\begin{proposition}\label{Hn22groupings}
The following is an exhaustive list of all symmetric (and thus commutative) subschemes of $\M_{p,2,2}$, for $6 \leq p \leq 15$.
\begin{itemize}
\item[(i)]
The following are symmetric subschemes of $\M_{p,2,2}$ for all $p$ where $6 \leq p \leq 15$, 
\begin{itemize}
\item
the scheme made up of the non-zero matrices in the set \\ $\tilde{\M}_{p,2,2} = \set{I, M_1, B_5, B_6, B_7, B_8}$;
\item
the trivial scheme $\set{ I, J-I}$;
\item
the $2$-associate scheme $\set{I, M_1, J- M_1 - I}$.
\end{itemize}
\item[(ii)]
The following are sets of matrices that are only symmetric subschemes for certain values of $p$:
\begin{center}
\begin{tabular}{|l|l|}
\hline
$p$ & \tn{Symmetric Subschemes}\\
\hline
$6$ 
& $\set{I, M_4, J-M_4-I}$\\
& $\set{I, M_2+M_5, J - M_2-M_5-I}$\\
& $\set{I,M_1+M_2+M_6+M_7, M_3+M_5, M_4} $\\
& $\set{I, M_1+M_3+M_4, M_2+M_5, M_6+M_7} $\\
& $\set{I, M_1+M_4, M_2+M_5, M_3, M_6+M_7} $\\
\hline
$7$
& $ \set{I, M_1, B_5+B_7, B_6}$\\
\hline
$8$
& $\set{I, M_1+B_8, B_5+B_6+B_7} $\\
& $\set{I, M_1, B_5+B_6+B_7, B_8} $\\
& $\set{I, M_1+B_8, B_5+B_7, B_6} $\\
& $\set{I, M_1, B_5+B_7, B_6, B_8} $\\
\hline
$9$
& $ \set{I, M_1+M_2+M_6+M_7+M_9,  M_3+M_4+M_8}$\\
& $\set{I, M_1, B_5+B_8, B_6+B_7} $\\
\hline
$11$
& $\set{I, M_1, B_5+B_8, B_6+B_7} $\\
\hline
$12$
& $\set{I,M_1,B_5+B_7, B_6+B_8}$\\
\hline
\end{tabular}
\end{center}
\end{itemize}
\end{proposition}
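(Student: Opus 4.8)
The plan is to recast the classification as a finite, mechanical search over the contractions of $\M_{p,2,2}$, carry that search out for each $p$ with $6 \leq p \leq 15$, and cross-check the output against the three families in part~(i).

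First I would prune the search space down to something small. A contraction of $\M_{p,2,2}$ is exactly a partition $\pi$ of the index set $\{0,1,\ldots,9\}$ of its associates (with $M_0 = I$), the new associates being the block-sums $N_B := \sum_{i \in B} M_i$. For the contraction to contain $I$, which property~(A1) demands, the block containing $0$ must be the singleton $\{0\}$: otherwise $N_B \neq I$ for the block $B \ni 0$, and no associate of the contraction equals $I$. Property~(A3) needs no attention, since $\sum_{i=0}^{9} M_i = J$ already holds in $\M_{p,2,2}$. For the contraction to be symmetric, I would use that among $M_1,\ldots,M_9$ the only non-symmetric associates are $M_6$ and $M_7$ — the classes $(3^+,1^-)$ and $(1^+,3^-)$, which are interchanged by swapping the two matchings, so that $M_6^{\top} = M_7$ — and therefore $N_B = N_B^{\top}$ for every block $B$ if and only if $6$ and $7$ belong to the same block; in that case~(A2) holds trivially and commutativity follows from symmetry (cf.\ Section~\ref{secPrelim}). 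So the only condition left to test for any admissible partition is the spanning property~(A4).

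Next I would assemble the arithmetic needed for that test. Fix $p$. A class $X_i$ with $c_i$ non-primed parts saturates $2r + c_i = 4 + c_i$ vertices (as in the proof of Proposition~\ref{propEquivPartitions}), so $M_i = 0$ exactly when $p < 4 + c_i$; concretely $M_8 = M_9 = 0$ for $p=6$, $M_9 = 0$ for $p=7$, and all ten associates are nonzero for $p \geq 8$. Since $\M_{p,2,2}$ is a Schurian coherent configuration, $M_i M_j = \sum_k c_{ij}^k(p)\, M_k$ with nonnegative integer structure constants, and the whole multiplication table for the nonzero associates is computed directly: $c_{ij}^k$ is the number of matchings $U$ with $(S,U) \in X_i$ and $(U,T) \in X_j$, for any fixed $(S,T) \in X_k$. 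With the table in hand I would enumerate every partition of the nonzero index set that keeps $0$ a singleton and $6,7$ together — at most $4140$ partitions in any case, since that is the number of set partitions of an $8$-element set — and for each one test~(A4): writing $N_B N_{B'} = \sum_k d_k M_k$ with $d_k = \sum_{i \in B,\, j \in B'} c_{ij}^k$, the span of $\{N_B\}$ is multiplicatively closed precisely when, for every ordered pair of blocks $(B,B')$, the coefficient $d_k$ is constant as $k$ ranges over each block. The partitions that pass this test for a given $p$ are exactly the symmetric subschemes of $\M_{p,2,2}$; running it for $p = 6,7,\ldots,15$ and separating the partitions that survive for all $p$ in the range from those that do not yields precisely the lists~(i) and~(ii). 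As an independent check on~(i): $\tilde{\M}_{p,2,2}$ is a subscheme by Proposition~\ref{HtildeHoverline}, $\{I,J-I\}$ is the trivial subscheme, and for $\{I, M_1, J - M_1 - I\}$ one verifies directly that $M_1^2 \in \tn{Span}~\set{I, M_1, J}$ (equivalently, that $G(M_1)$ is strongly regular), which also drops out of the computation.

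The hard part is not conceptual but a matter of getting the bookkeeping exactly right. One must compute the structure constants $c_{ij}^k(p)$ correctly, and in particular handle the degenerate cases $p \in \{6,7\}$ with care: there associates vanish and the product rules must be recomputed from scratch rather than obtained by simply deleting the missing associates from the $p \geq 8$ table (products that would generically have support on $X_8$ or $X_9$ redistribute their mass among the surviving classes). One also has to be sure the partition enumeration is genuinely exhaustive, which is exactly what the two reductions of the second paragraph are for (the singleton block $\{0\}$ and the forced grouping of $6$ and $7$). Beyond that, the search is a routine computation whose size is bounded by the Bell number above, so it can be both carried out and independently audited.
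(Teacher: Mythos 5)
Your proposal is correct and matches the paper's approach: the paper's justification for this proposition is precisely an exhaustive computational test of all contractions of $\M_{p,2,2}$ for $p \leq 15$, and your reductions (identity kept as a singleton block, $M_6$ and $M_7$ forced into a common block since $M_6^{\top}=M_7$, then checking closure of the span via the structure constants, with the vanishing associates handled separately for $p=6,7$) are a sound and complete way to organize that search. The paper simply reports the outcome of such a computation without spelling out the bookkeeping, so your write-up is essentially a more explicit version of the same argument.
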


The three subschemes that work for all values of $p$ we checked are no surprises: The trivial scheme and $\tilde{\M}_{p,2,2}$ are expected, and the third scheme $\set{I, M_1, J-M_1-I}$ is in fact the wreath product $\K_1 \wr \K_2$, where $\K_1, \K_2$ are trivial schemes on ground sets of sizes $\binom{p}{4}$ and $3$, respectively. This can be shown using the same argument as in the proof of Proposition~\ref{HtildeHoverline}, while noting that $\K_2$ is equivalent to the perfect matching scheme $\M_{4,2,2}$. Thus, we see that these three would be symmetric subschemes of $\M_{p,2,2}$ for all $p \geq 6$.

For Proposition~\ref{Hn22groupings}(ii), the $B_i$ notation is used when the given subscheme can be resulted from contracting associates in $\tilde{\M}_{p,2,2}$. Notice that for $p \neq 6,9$ (in which Table~\ref{table1} showed there are some ``coincidental'' commutativity between associates that are not present in other values of $p$), all subschemes we obtained are contractions of $\tilde{\M}_{p,2,2}$. It would be interesting to know if it is indeed true that, for all $p \neq 6,9$, all symmetric subschemes of $\M_{p,2,2}$ are in fact subschemes of $\tilde{\M}_{p,2,2}$ (or $\tilde{\M}_{p,2,2}$ itself).

We finish this section by proving a result that shows that, if a certain contraction of associates in $\M_{p,q,r}$ produces a symmetric subscheme for all values of $p$ up to a certain point, then it is assured that this contraction would yield a symmetric subscheme of $\M_{p,q,r}$ for all $p$. As we have seen in Section~\ref{sec41}, with fixed $q, r$, the number of associates in $\M_{p,q,r}$ increases as $p$ increases from $qr$ to $2qr$, and remains constant for all $p \geq 2qr$. For the compactness of stating our results, for the rest of the section, we let $\I_{q, r}$ denote the collection of isomorphism classes in $\M_{2qr,q, r}$,  and we will think of $\M_{p,q,r}$ as a set of $|\I_{q,r}|$ matrices, some of which are all zeros when $p < 2qr$.

We first need the following lemma.

\begin{lemma}\label{lemnextension}
Let $C_1, C_2 \subseteq \I_{q, r}$  be subsets of isomorphism classes in $\M_{p,q,r}$, and define matrices $M_{p,1} := \sum_{i \in C_1} M_{p,q, r,i}$ and $M_{p,2} := \sum_{i \in C_2} M_{p,q, r,i}$. Also let $S_1, S_1', S_2, S_2' \in [p]_q^r$. If 
\begin{equation}\label{lemnextensioneq1}
(M_{p,1}M_{p,2})[S_1, S_1'] = (M_{p,1}M_{p,2})[S_2,S_2']
\end{equation}
holds for all $p \leq 3qr$, then~\eqref{lemnextensioneq1} holds for all integers $p$.
\end{lemma}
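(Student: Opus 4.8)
The plan is to show that the value of a product entry $(M_{p,1}M_{p,2})[S,S']$ depends only on the isomorphism class of the pair $(S,S')$ and, crucially, that this value \emph{stabilizes} once $p$ is large enough relative to $q$ and $r$. The key observation is that $(M_{p,1}M_{p,2})[S,S']$ counts the number of matchings $U \in [p]_q^r$ such that the pair $(S,U)$ lies in an isomorphism class indexed by $C_1$ and the pair $(U,S')$ lies in one indexed by $C_2$. Each such $U$ interacts nontrivially only with the vertices saturated by $S \cup S'$ (at most $2qr$ of them); the rest of $U$'s hyperedges live in the ``free'' part of $[p]$.

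First I would set $\Omega := \sat(S) \cup \sat(S')$, so $|\Omega| = t \leq 2qr$, and classify each candidate $U$ by the ``trace'' it leaves on $\Omega$ — that is, by how many hyperedges of $U$ lie entirely inside $\Omega$, how many meet $\Omega$ partially, and in what combinatorial pattern (this is exactly the meet-table data discussed in Section~\ref{sec41}, relative to $S$ and to $S'$ simultaneously). Say $U$ has $s$ hyperedges meeting $\Omega$, saturating some set $\Omega'$ with $\Omega \subseteq \Omega \cup \supp$ of size at most $t + sq \leq 3qr$ once we also account for the partial hyperedges. The remaining $r - s$ hyperedges of $U$ must form a matching among the $p - |\Omega \cup \sat(U)|$ untouched vertices, and the number of ways to do this is $|[p - |\Omega \cup \sat(U)|]_q^{r-s}|$, a quantity that depends only on $p$, $q$, $r$, and the trace type. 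Whether $(S,U) $ and $(U,S')$ land in $C_1$ and $C_2$ depends \emph{only} on the trace of $U$ on $\Omega$, not on the free part. Therefore
\[
(M_{p,1}M_{p,2})[S,S'] = \sum_{\tau} N_\tau(S,S') \cdot \left|[p - n_\tau]_q^{r - s_\tau}\right|,
\]
where $\tau$ ranges over admissible trace types, $N_\tau(S,S')$ counts matchings-on-$\Omega\cup(\text{partial supports})$ of type $\tau$ (an isomorphism invariant of $(S,S')$, independent of $p$ once $p$ is large enough to embed the configuration), and $n_\tau, s_\tau$ are determined by $\tau$. Every configuration witnessing a nonzero $N_\tau$ lives on at most $3qr$ vertices, so all the coefficients $N_\tau(S,S')$ are already determined by the structure of $(S,S')$ inside any $\M_{p,q,r}$ with $p \geq 3qr$, and by isomorphism-invariance they agree with their values computed for $p \leq 3qr$.

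Now suppose~\eqref{lemnextensioneq1} holds for all $p \leq 3qr$. I claim it then holds for all $p$. The difference $D(p) := (M_{p,1}M_{p,2})[S_1,S_1'] - (M_{p,1}M_{p,2})[S_2,S_2']$ is, by the displayed formula, a fixed integer linear combination $\sum_\tau c_\tau \, |[p - n_\tau]_q^{r-s_\tau}|$ with $p$-independent coefficients $c_\tau = N_\tau(S_1,S_1') - N_\tau(S_2,S_2')$, valid for all $p$ large enough that $N_\tau(S_i, S_i')$ has stabilized — certainly for $p \geq 3qr$ but in fact for all $p \geq qr$ provided we interpret $N_\tau$ correctly on the smaller ground sets (a configuration that cannot be embedded simply contributes $0$). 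The function $p \mapsto |[p - n]_q^{m}| = \frac{(p-n)!}{m!(q!)^m(p - n - qm)!}$ is a polynomial in $p$ of degree $qm$, and the distinct pairs $(n_\tau, s_\tau)$ give polynomials of pairwise distinct degrees $q(r - s_\tau)$ (or, within a fixed degree, linearly independent polynomials indexed by the shift $n_\tau$). Hence $\{|[p-n_\tau]_q^{r-s_\tau}|\}_\tau$ is a linearly independent family of polynomials, so $D(p)$ is a polynomial in $p$; its vanishing on the $\geq qr+1$ consecutive integers $p \leq 3qr$ with $p > qr$ forces $D \equiv 0$ as a polynomial, hence $D(p) = 0$ for all integers $p$. (One must check there are enough such evaluation points to exceed the degree $\leq qr$ of $D$; since $3qr - qr = 2qr \geq qr+1$ for $q,r \geq 1$ this is fine, and the degenerate small cases are handled directly.)

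\textbf{Main obstacle.} The delicate point is the stabilization-and-polynomiality bookkeeping: one must verify carefully that (a) the coefficients $N_\tau(S,S')$ genuinely depend only on the isomorphism type of $(S,S')$ and not on $p$ once $p \geq 3qr$ — equivalently, that every relevant ``partial'' configuration of $U$ with $S, S'$ fits inside $3qr$ vertices, which is why the hypothesis is stated up to $3qr$ and not, say, $2qr$ — and (b) the functions $|[p-n]_q^m|$ are linearly independent as polynomials in $p$, so that $D(p)$ being identically zero is equivalent to all the relevant coefficient-differences cancelling, and a finite check suffices. Part (b) is routine polynomial algebra; part (a) is the real content and should be argued by explicitly exhibiting, for any type $\tau$, a witnessing pair living on at most $t + (\text{vertices of partial hyperedges of }U) \le 2qr + qr = 3qr$ vertices.
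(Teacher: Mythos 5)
Your route is genuinely different from the paper's: you aim to show that, with the isomorphism types of $(S_1,S_1')$ and $(S_2,S_2')$ fixed, each product entry is a polynomial in $p$ of degree at most $qr$, so that agreement at the (at least $qr+1$) admissible integers $p\le 3qr$ forces agreement for all $p$. The paper instead never invokes polynomiality: it defines $f^U(S,S')$ as the set of middle matchings $T$ with $(S,T)$ in a class of $C_1$, $(T,S')$ in a class of $C_2$, and $\sat(S)\cup\sat(S')\cup\sat(T)\subseteq U$, notes that for $p'\ge 3qr$ the entry $(M_{p',1}M_{p',2})[S,S']$ equals the size of the union of $f^U(S,S')$ over all $U$ of size exactly $3qr$, and uses inclusion--exclusion to write this as $\sum_W b_W\,|f^W(S,S')|$ over $|W|\le 3qr$ with coefficients $b_W$ depending only on $p',q,r$ (not on the pair); the hypothesis then gives equality of all the small counts and hence of the two entries.

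There is, however, a genuine flaw in your key displayed formula. A hyperedge of the middle matching that meets $\Omega$ in fewer than $q$ vertices must be completed by vertices \emph{outside} $\Omega$, and the number of ways to choose those external vertices grows with $p$. So the quantity $N_\tau(S,S')$, described as counting ``matchings on $\Omega$ together with the partial supports,'' cannot simultaneously be $p$-independent and absorb those choices: as written the right-hand side either undercounts or hides $p$-dependence inside $N_\tau$, and the stated degrees $q(r-s_\tau)$ are also wrong, since external vertices of partial hyperedges contribute to the degree. The repair is to let the type $\tau$ record only the trace of $T$ on $\Omega$ (which, since $\sat(S),\sat(S')\subseteq\Omega$, indeed determines whether $(S,T)$ and $(T,S')$ land in $C_1$ and $C_2$), and to fold \emph{all} external choices --- the outside parts of partial hyperedges and the wholly free hyperedges --- into a single completion factor $g_\tau(p)$, a polynomial in $p$ of degree equal to the number of external vertices (at most $qr$), valid for every integer $p\ge|\Omega|$. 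With that correction your endgame goes through, and the linear-independence discussion is unnecessary: the difference $D(p)$ is a polynomial of degree at most $qr$ vanishing at the at least $qr+1$ integers $p$ with $\max\bigl(|\sat(S_1)\cup\sat(S_1')|,\,|\sat(S_2)\cup\sat(S_2')|\bigr)\le p\le 3qr$, hence identically zero. (Note also that the two pairs may have different values of $|\Omega|$, so the ``coefficient differences $c_\tau$'' do not literally pair up term by term; the evaluation argument does not need them to.)
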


\begin{proof}
Given $S,S' \in [p]_q^r$, and a set of vertices $U \subseteq [p]$, define 
$f^U(S,S')$ to be the set of matchings $T \in [p]_q^r$ where 
\begin{itemize}
\item
$(S, T) $ belongs to an isomorphism class in $C_1$;
\item
$(T,S')$ belongs to an isomorphism class in $C_2$;
\item
The vertices saturated by $S,S'$ and $T$ are all contained in $U$.
\end{itemize}

Notice that $(M_{p,1}M_{p,2})[S,S'] = |f^{[p]}(S,S')|$. Thus, the hypothesis that~\eqref{lemnextensioneq1} holds for all $p \leq 3qr$ can be restated as
\[
\left| f^U(S_1,S_1')\right| = \left| f^U(S_2,S_2') \right|, \quad \forall U \subseteq [p], |U| \leq 3qr.
\]
Now notice that, for arbitrary $p' \geq 3qr$,
\[
(M_{p',1}M_{p',2})[S_1,S_1'] = \left| f^{[p']}(S_1,S_1') \right| = \left| \bigcup_{U \subseteq [p'], |U|= 3qr} f^{U}(S_1,S_1') \right|.
\]
The last equality follows since the union of any 3 matchings $S_1,S_1',T \in [p]_q^r$ saturates at most $3qr$ vertices, so every matching in $f^{[p']}(S_1,S_1')$ is accounted for in the union. Next, one can apply the principle of inclusion-exclusion to express $\left| \bigcup_{U \subseteq [p'], |U|= 3qr} f^{U}(S_1,S_1') \right|$ as a linear combination of $\left| f^W(S_1,S_1') \right|$'s where $W$ is an intersection of sets of size $3qr$ (and thus has size no more than $3qr$). Thus, we obtain integers $b_W$'s such that 
\[
\left| \bigcup_{U \subseteq [p'], |U|= 3qr} f^{U}(S_1,S_1') \right| = \sum_{W \subseteq [p'], |W| \leq 3qr} b_W \left|  f^{W}(S_1,S_1') \right|.
\]
Notice that the coefficients $b_W$ only depend on $p', q$, and $r$, and not $S_1, S_1'$. Thus, by the same rationale we obtain that 
\[
(M_{p',1}M_{p',2})[S_2,S_2'] = \sum_{W \subseteq [p'], |W| \leq 3qr} b_W \left|  f^{W}(S_2,S_2') \right|.
\]
By our hypothesis, $\left| f^{W}(S_1,S_1')\right| = \left| f^{W}(S_2,S_2') \right|$ for all $W$ of size no more than $3qr$. Thus, we conclude that~\eqref{lemnextensioneq1} indeed holds for all $p' > 3qr$.
\end{proof}

Finally, let $X_0 \in \I_{q, r}$ denote the isomorphism class that corresponds to the identity matrix. Then we have the following:

\begin{proposition}\label{propnextension}
Let $C_1, \ldots, C_m$ be a partition of the non-identity isomorphism classes $\I_{q, r} \setminus \set{X_0}$. Define matrices
\[
B_{p,i} := \sum_{j \in C_i} M_{p,q, r,j}
\]
for every $i \in [m]$ and $p \geq qr$. If
\[
\B_p := \set{I} \cup \set{B_{p,i} : i \in [m]}
\]
is a symmetric subscheme of $\M_{p,q,r}$ for all $p \leq 3qr$, then it is in fact a symmetric subscheme of $\M_{p,q,r}$ for all $p$.
\end{proposition}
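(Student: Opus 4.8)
The plan is to verify the scheme axioms (A1)--(A4) for $\B_p$ one at a time, peeling off the easy ones and reducing the one genuinely structural axiom to Lemma~\ref{lemnextension}. Axioms (A1) and (A3) hold for every $p$ with no work: the identity is in $\B_p$ by construction, and since $C_1,\ldots,C_m$ partition $\I_{q,r}\setminus\set{X_0}$ we get $I+\sum_{i\in[m]}B_{p,i}=I+\sum_{j\neq X_0}M_{p,q,r,j}=\sum_{j\in\I_{q,r}}M_{p,q,r,j}=J$ by (A3) for $\M_{p,q,r}$. So only symmetry (which yields (A2)) and the spanning property (A4) require the hypothesis.

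For symmetry I would observe that the ``transpose pairing'' $j\mapsto j'$ on $\I_{q,r}$ — sending the class of $(S,T)$ to the class of $(T,S)$ — does not depend on $p$, since swapping the two matchings does not change how many vertices their union saturates (so zero matrices are matched with zero matrices), and transposing $M_{p,q,r,j}$ produces $M_{p,q,r,j'}$ for every $p$. Because $\B_{2qr}$ is a symmetric scheme by hypothesis ($2qr\le 3qr$), for each $i$ and each $j\in C_i$ we have $j'\in C_i$; summing over $C_i$ gives $B_{p,i}^{\top}=\sum_{j\in C_i}M_{p,q,r,j'}=B_{p,i}$ for all $p$.

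The crux is (A4), and this is exactly the situation Lemma~\ref{lemnextension} was designed for. Fix $i,j\in[m]$. Since $\M_{p,q,r}$ itself satisfies (A4), $B_{p,i}B_{p,j}\in\tn{Span}~\M_{p,q,r}$, so $(B_{p,i}B_{p,j})[S,T]$ depends only on the isomorphism class $X_k$ of $(S,T)$; write $c_k^{(p)}$ for this common value. Comparing the expansion $B_{p,i}B_{p,j}=c_{X_0}^{(p)}I+\sum_k c_k^{(p)}M_{p,q,r,k}$ against a putative expansion in $\tn{Span}~\B_p$ and using the linear independence of the nonzero associates, one sees that $B_{p,i}B_{p,j}\in\tn{Span}~\B_p$ if and only if $c_k^{(p)}$ is constant as $X_k$ ranges over each block $C_l$ — equivalently, if and only if $(B_{p,i}B_{p,j})[S_1,T_1]=(B_{p,i}B_{p,j})[S_2,T_2]$ whenever the classes of $(S_1,T_1)$ and $(S_2,T_2)$ lie in the same block. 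For $p\le 3qr$ this is precisely the assumption that $\B_p$ is a subscheme. Applying Lemma~\ref{lemnextension} with its $C_1,C_2$ taken to be $C_i,C_j$ and the pairs $(S_1,S_1'),(S_2,S_2')$ taken to be $(S_1,T_1),(S_2,T_2)$ promotes this equality from $p\le 3qr$ to all $p$. Hence $c_k^{(p)}$ is block-constant for every $p$, (A4) holds, and combined with the three other axioms, $\B_p$ is a symmetric subscheme of $\M_{p,q,r}$ for all $p$.

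I expect the only delicate point — and the thing to state carefully rather than the thing that is hard — to be the interface with Lemma~\ref{lemnextension}: one must make sure that $(B_{p,i}B_{p,j})[S_1,T_1]$ genuinely depends only on the (local) isomorphism type of $(S_1,T_1)$, so that ``choose a representative for each $p$'' is harmless; this is exactly why recording $B_{p,i}B_{p,j}\in\tn{Span}~\M_{p,q,r}$ first is essential. The threshold $3qr$ is the natural one because a union of three $r$-matchings saturates at most $3qr$ vertices, which is the bound driving the inclusion--exclusion in Lemma~\ref{lemnextension}. Everything else is bookkeeping with the axioms.
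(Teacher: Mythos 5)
Your proposal is correct and follows essentially the same route as the paper: axioms (A1) and (A3) are immediate, and (A4) is reduced to block-constancy of the class function of $B_{p,i}B_{p,j}$ (using $B_{p,i}B_{p,j}\in\tn{Span}~\M_{p,q,r}$), which Lemma~\ref{lemnextension} with $C_1=C_i$, $C_2=C_j$ promotes from $p\leq 3qr$ to all $p$. The only cosmetic difference is the symmetry step: the paper handles it by applying Lemma~\ref{lemnextension} once more, to $(B_{p,0}B_{p,i})[S,S']=(B_{p,0}B_{p,i})[S',S]$ with $B_{p,0}=I$, whereas you argue directly via the $p$-independent transpose pairing of isomorphism classes; both are valid, and your interface caveat (entries depending only on the local isomorphism type, with $3qr$ as the natural threshold) matches the paper's implicit use of the lemma.
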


\begin{proof}
For convenience, let $B_{p,0} := I$ throughout this proof. It is clear that $\B_p$ satisfies (A1) and (A3) in Definition~\ref{defnscheme}. We next prove that it also satisfies (A4). By hypothesis, we have
\begin{equation}\label{propnextensioneq1}
B_{p,i}B_{p,j} \in \tn{Span}~\B_p
\end{equation}
for all $i,j \in \set{0,\ldots, m}$ and $p \leq 3qr$. Now suppose for a contradiction that there is an integer $p' > 3qr$ where~\eqref{propnextensioneq1} fails. Since $\M_{p',q, r}$ is an h.c.c.\ and thus satisfies (A4), we know that $B_{p',i}B_{p',j} \in \tn{Span}~\M_{p',q, r}$. Thus, there must exist an index $\l$ and $S_1, S_1', S_2, S_2' \in [p]_q^r$ where
\[
B_{p',\l}[S_1,S_1'] = B_{p',\l}[S_2,S_2'] \quad \tn{and} \quad (B_{p',i}B_{p',j})[S_1,S_1'] \neq (B_{p',i}B_{p',j})[S_2, S_2'].
\]
However, by our hypothesis, $B_{p,i}B_{p,j} \in \tn{Span}~\B_p$ for all $p \leq 3qr$. Thus,
$B_{p,\l}[S_1,S_1'] = B_{p,\l}[S_2,S_2']$, which implies that $(B_{p,i}B_{p,j})[S_1,S_1'] = (B_{p,i}B_{p,j})[S_2, S_2']$. Then by Lemma~\ref{lemnextension}, it must be the case that $(B_{p',i}B_{p',j})[S_1,S_1'] = (B_{p',i}B_{p',j})[S_2, S_2']$ as well. Thus, $B_{p', i}B_{p',j} \in \tn{Span}~\B_{p'}$. 

Next, we show that for all $p' > 3qr$, $B_{p',i}$ is a symmetric matrix. By assumption,
\begin{equation}\label{propnextensioneq2}
(B_{p,0}B_{p,i})[S,S'] = (B_{p,0}B_{p,i})[S',S]
\end{equation}
 for all $p \leq 3qr$. Thus, applying Lemma~\ref{lemnextension} again, we obtain $B_{p',i} = B_{p',i}^{\top}$ for all $p' > 3qr$ as well. It then follows that (A2) holds as well. This finishes the proof.
\end{proof}

In the case of $q=r=2$, Proposition~\ref{propnextension} simply tells us that the three subschemes listed in Proposition~\ref{Hn22groupings}(i) are indeed subschemes of $\M_{p,2,2}$ for all $p$, which we have already discussed. It would be interesting to see if some version of the converse of Proposition~\ref{propnextension} is true --- that if a certain contraction fails to yield a symmetric subscheme for enough small values of $p$, then we can guarantee that it would also fail to do so for large $p$.

\section{Concluding remarks}

Throughout this paper, we have pointed out some connections between association schemes and the analyses of semidefinite programs, as illustrated mainly by studying the lift-and-project relaxations of several classical problems in combinatorial optimization. In particular, we saw that the process of verifying the positive semidefiniteness of a certificate matrix could be simplified if said matrix is related to an association scheme whose eigenvalues are known.

We comment that, since the hypermatching packing problem considered in Section~\ref{sec31} only concerns vertex saturation, two matchings are essentially interchangeable in the problem if they saturate the exact same set of vertices. Thus, instead of considering a matching of $r$ hyperedges in $K_p^q$, one could have worked with a single hyperedge of size $qr$. Then we would be working with the simpler scheme $\M_{p,qr, 1} = \J_{p, qr}$ instead. One of the reasons why we based our discussion on the more general framework of $\M_{p,q,r}$ is that this allows easier adaptation to study other combinatorial optimization problems where such a reduction may not be possible or suitable. 

Finally, the approach of using association schemes and h.c.c.s to help analyze lift-and-project relaxations could also benefit from a better understanding of the underlying schemes, as we attempted to do for $\M_{p,q,r}$ in Section~\ref{secHnkl}. For instance, for the perfect matching scheme $\M_{2r, 2, r}$, it is known that its eigenvalues can be determined using zonal polynomials~\cite{MacDonald95}. More recently, Srinivasan~\cite{Srinivasan20} showed that the eigenvalues of $\M_{2r,2,r}$ can be computed by recursively solving systems of linear equations that involve the central characters of the symmetric groups $\set{ \S_{2i} : i \in [r]}$. Also lately, there has been interest in studying the eigenvalues of the perfect matching derangement graph~\cite{GodsilM16, Lindzey17, KuW18}, whose adjacency matrix is the sum of a subset of associates in $\M_{2r,2,r}$. However, we still do not have explicit and tractable combinatorial descriptions of the eigenvalues of the scheme $\M_{2r,2,r}$ in general, and a breakthrough on this front could give us a better handle on the eigenvalues of the matrices in $\tn{Span}~\overline{\M}_{p,2,r}$, a broader class of potential certificate matrices than those in $\tn{Span}~\tilde{\M}_{p,2,r}$.

\bigskip

{\bf Acknowledgment:}
We thank an anonymous referee for very useful comments and suggestions which helped
improve the content and the presentation of the paper.

\bibliographystyle{plain}
\bibliography{ref} 
\end{document}